\titleformat*{\subsection}{\bfseries\itshape}
\providecommand{\keywords}[1]
{
\noindent{\small
	\textbf{Keywords:} #1}
}
\providecommand{\msc}[1]
{
\noindent{\small
	\textbf{2010 MSC:} #1}
}
\newtheorem{theorem}{Theorem}[section]
\newtheorem{lemma}[theorem]{Lemma}
\newtheorem{proposition}[theorem]{Proposition}
\newtheorem*{thm}{Theorem}
\theoremstyle{definition}
\newenvironment{remark}
{\pushQED{\qed}\examplex}
	{\popQED\endexamplex}
\newcommand{\R}{{\mathbb R}}
\newcommand{\N}{{\mathbb N}}
\newcommand{\C}{{\mathbb C}}
\newcommand{\K}{{\mathbb K}}
\newcommand{\cA}{\mathcal{A}}
\newcommand{\cB}{\mathcal{B}}
\newcommand{\cU}{\mathcal{U}}
\newcommand{\cE}{\mathcal{E}}
\newcommand{\cV}{\mathcal{V}}
\newcommand{\nn}{\nonumber}
\DeclareMathOperator{\ai}{Ai}
\DeclareMathOperator{\bi}{Bi}
\DeclareMathOperator{\ci}{Ci}
\DeclareMathOperator{\ch}{ch}
\DeclareMathOperator*{\re}{Re}
\DeclareMathOperator*{\im}{Im}
\newcommand{\abs}[1]{\left\lvert #1 \right\rvert}
\newcommand{\abss}[1]{\bigl\lvert #1 \bigr\rvert}
\newcommand{\norm}[1]{\left\lVert #1 \right\rVert}
\newcommand{\inner}[2]{\left\langle#1,#2\right\rangle}
\newcommand{\cc}[1]{\overline{#1}}
\newcommand{\floor}[1]{\lfloor#1 \rfloor}
\newcommand{\mb}[1]{\heavysymbol{#1}}
\newcommand{\defeq}{\mathrel{\mathop:}=}
\newcommand{\sA}{\cA}
\newcommand{\bsA}{\mb{\mathfrak{A}}}
\newcommand{\om}{\omega}
\newcommand{\bom}{\underline{\mb{\omega}}}
\newcommand{\ub}{\tau}
\title {\bf The Dirichlet problem for perturbed Stark operators in the half-line}
\author[1]{Julio H. Toloza%
		\thanks{\faEnvelopeO\, {julio.toloza@uns.edu.ar (corresponding author)}}%
			}
\author[2]{Alfredo Uribe%
		\thanks{\faEnvelopeO\, {alur@xanum.uam.mx}}%
			}
\affil[1]{Instituto de Matemática (INMABB)\\
		 Departamento de Matemática\\
		 Universidad Nacional del Sur (UNS) - CONICET\\
		 Bahía Blanca\\
		 Argentina}
\affil[2]{Departamento de Matemáticas\\
		 Universidad Autónoma Metropolitana\\
		 Av. San Rafael Atlixco 186\\
		 Col. Vicentina, Iztapalapa, C.P. 09340, México D.F.\\
		 Mexico}
\begin{document}

\date{}
\maketitle

\begin{abstract}
We consider the perturbed Stark operator
$H_q\varphi = -\varphi'' + x\varphi + q(x)\varphi$, $\varphi(0)=0$,
in $L^2(\R_+)$, where $q$ is a real function that belongs to
$\bsA_r =\left\{ q\in\sA_r\cap\text{AC}[0,\infty) : q'\in\sA_r\right\}$,
where $\sA_r = L^2_\R(\R_+,(1+x)^r dx)$ and $r>1$ is arbitrary but fixed.
Let $\left\{\lambda_n(q)\right\}_{n=1}^\infty$ and
$\left\{\kappa_n(q)\right\}_{n=1}^ \infty$ be the spectrum and associated set
of norming constants of $H_q$. Let $\{a_n\}_{n=1}^\infty$ be the zeros of the
Airy function of the first kind, and let $\omega_r:\N\to\R$ be defined by the rule
$\omega_r(n) = n^{-1/3}\log^{1/2}n$ if $r\in(1,2)$ and $\omega_r(n) = n^{-1/3}$
if $r\in[2,\infty)$. We prove that
$\lambda_n(q)
	= -a_n + \pi (-a_n)^{-1/2}\int_0^\infty \ai^2(x+a_n)q(x)dx
		+ O(n^{-1/3}\omega_r^2(n))$
and
$\kappa_n(q)
	= - 2\pi (-a_n)^{-1/2}\int_0^\infty \ai(x+a_n)\ai'(x+a_n)q(x)dx
		+ O(\omega_r^3(n))$,
uniformly on bounded subsets of $\bsA_r$. In order to obtain these asymptotic formulas,
we first show that $\lambda_n:\sA_r\to\R$ and $\kappa_n:\sA_r\to\R$ are real analytic maps.
\end{abstract}

\bigskip
\keywords{Stark operators, spectral theory, asymptotic analysis}

\msc{
34E10,	
34L15,	
81Q05,	
81Q10	
}


\section{Introduction}

This paper is concerned with sharp asymptotics of the spectral data of perturbed
Stark operators, to wit,
\begin{equation*}
H_q \defeq -\frac{d^2}{dx^2} + x + q(x),
\quad
x\in \R_+ \defeq [0,\infty),
\end{equation*}
in $L^2(\R_+)$, adjoined with Dirichlet boundary condition at $x=0$.
We consider real-valued perturbations $q$ that belong to the Hilbert space
\begin{equation*}
\bsA_r \defeq \left\{ q\in\sA_r\cap\text{AC}[0,\infty) : q'\in\sA_r\right\},
\quad \norm{q}_{\bsA_r}^2 \defeq \norm{q}_{\sA_r}^2 + \norm{q'}_{\sA_r}^2,
\end{equation*}
where
\begin{equation*}
\sA_r \defeq L^2_\R(\R_+,(1+x)^r dx),\quad
	\norm{q}_{\sA_r} \defeq \norm{q}_{L^2(\R_+,(1+x)^r dx)},
\end{equation*}
and $r>1$ is arbitrary but fixed.

For perturbations in $\sA_r$, 
$H_q$ is a semi-bounded, self-adjoint operator. Moreover, it
has only simple, discrete spectrum, with a finite number of negative eigenvalues (if any).
Thus, $H_q$ is uniquely determined by the spectral data consisting of the set of eigenvalues
\begin{equation*}
\left\{\lambda_n(q) : \psi(q,\lambda_n(q),0) = 0\right\}_{n=1}^\infty
\end{equation*}
along with the set of corresponding (logarithmic) norming constants
\begin{equation*}
\left\{\kappa_n(q)
	= \log\left(-\frac{\psi'(q,\lambda_n(q),0)}
		{\dot{\psi}(q,\lambda_n(q),0)}\right)\right\}_{n=1}^\infty;
\end{equation*}
here $\psi(q,z,x)$ stands for the (unique up to a constant multiple) square integrable
solution to the eigenvalue problem
$-\varphi'' + [x + q(x)]\varphi = z\varphi$ ($z\in\C$),\footnote{The norming constants
are given by (minus) the residues of the Weyl function $m(z)$ ---which in this case
is a meromorphic Herglotz function--- at the eigenvalues. That is,
\[
e^{\kappa_n(q)}
	= -\lim_{\epsilon\to 0} i\epsilon\, m(q,\lambda_n(q) + i\epsilon)
	= \frac{\abs{\psi'(q,\lambda_n(q),0)}^2}{\norm{\psi(q,\lambda_n(q),\cdot)}_2^2}
	= -\frac{\psi'(q,\lambda_n(q),0)}{\dot{\psi}(q,\lambda_n(q),0)},
\]
where the last equality is consequence of the identity
$\partial_x(\psi\,\dot{\psi}' - \psi'\dot{\psi}) = -\psi^2$.
}
$\psi' = \partial_x\psi$ and $\dot{\psi} = \partial_z\psi$.
Needless to say, the same assertions holds true for perturbations in $\bsA_r$.

The spectral data of the unperturbed operator $H_0$ are, of course, easy to compute.
For in this case the eigenvalue problem $- \varphi'' + x \varphi = z \varphi$
has square integrable solution
\begin{equation*}
\label{eq:psi-0}
\psi_0(z,x) = \sqrt{\pi}\,\ai(x-z),
\end{equation*}
where $\ai(w)$ denotes the Airy function of the first kind (the factor $\sqrt{\pi}$ is
included for convenience). Hence,
\begin{equation*}
\lambda_n(0) = -a_n\quad\text{and}\quad \kappa_n(0) = 0
\end{equation*}
for every $n\in\N$, where $a_n$ denotes the $n$-th zero of the function $\ai(w)$.
It is well-known that
\begin{equation}
\label{eq:zeros-airy}
-a_n
	= \left(\tfrac32\pi\bigl(n-\tfrac14\bigr)\right)^{2/3} + O(n^{-4/3});
\end{equation}
see e.g. \cite[\S 9.9(iv)]{nist}.

The main results of the present paper are Proposition~\ref{lem:eingenvalue-is-real-analytic},
Theorem~\ref{thm:eigenvalues}, Proposition~\ref{lem:norming-constant-real-analytic} and
Theorem~\ref{thm:norming-constants}, the content of which can be summarized as follows.

\begin{thm}
For every $n\in\N$, $\lambda_n:\sA_r\to\R$ and $\kappa_n:\sA_r\to\R$
are real analytic maps.\footnote{Fréchet differentiability and related notions are
summarized at the beginning of Section~\ref{sec:frechet}.} Moreover, in terms of
\begin{equation}
\label{eq:omega_r}
\omega_r(n) \defeq	\begin{cases}
					n^{-1/3}\log^{1/2}n & \text{if } r\in(1,2),
					\\[1mm]
					n^{-1/3}			& \text{if } r\in[2,\infty),
					\end{cases}
\end{equation}
one has the following asymptotics:
\begin{equation*}
\lambda_n(q)
	= -a_n + \pi \frac{\int_0^\infty \ai^2(x+a_n)q(x)dx}{(-a_n)^{1/2}}
		+ O\bigl(n^{-1/3}\omega_r^2(n)\bigr)
\end{equation*}
and
\begin{equation*}
\kappa_n(q)
	= - 2\pi \frac{\int_0^\infty \ai(x+a_n)\ai'(x+a_n)q(x)dx}{(-a_n)^{1/2}}
		+ O\bigl(\omega_r^3(n)\bigr),
\end{equation*}
uniformly on bounded subsets of $\bsA_r$.
\end{thm}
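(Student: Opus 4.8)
The plan is to treat $H_q$ as a perturbation of $H_0$ and extract the spectral data from the behaviour of the Weyl solution $\psi(q,z,x)$ near the unperturbed eigenvalues $-a_n$. First I would set up a Volterra integral equation for $\psi(q,z,\cdot)$ with kernel built from the Airy functions $\ai(x-z)$ and $\bi(x-z)$ (or, more conveniently, from the two fundamental Airy-type solutions of $-\varphi''+x\varphi=z\varphi$), so that $\psi(q,z,x)=\psi_0(z,x)+\text{(corrections)}$. The crucial quantitative input is uniform control, as $n\to\infty$, of the Airy functions and their derivatives on the relevant range of arguments; here one uses the classical uniform asymptotics for $\ai$, $\ai'$ (and the turning-point behaviour), together with the weight $(1+x)^r$ in $\sA_r$ to guarantee that the $q$-dependent integrals converge and are small. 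This step should already be available from earlier sections of the paper (the analyticity statements of Proposition~\ref{lem:eingenvalue-is-real-analytic} and Proposition~\ref{lem:norming-constant-real-analytic} presuppose exactly such Volterra estimates); I would invoke them.

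Next, the eigenvalue $\lambda_n(q)$ is characterized by $\psi(q,\lambda_n(q),0)=0$. Since $\lambda_n:\sA_r\to\R$ is real analytic (Proposition~\ref{lem:eingenvalue-is-real-analytic}), I would write a first-order Taylor expansion of $\lambda_n$ at $q=0$. The linear term is computed by implicit differentiation of $\psi(q,\lambda_n(q),0)=0$: differentiating in the direction $q$, using the Wronskian identity $\psi\dot\psi'-\psi'\dot\psi=-\psi^2$ quoted in the footnote, and first-order perturbation theory for the eigenvalue, one gets
\[
D_q\lambda_n(0)[q]
  = \frac{\int_0^\infty \psi_0(-a_n,x)^2\,q(x)\,dx}{\int_0^\infty \psi_0(-a_n,x)^2\,dx}
  = \pi\,(-a_n)^{-1/2}\int_0^\infty \ai^2(x+a_n)\,q(x)\,dx,
\]
where the normalization $\int_0^\infty \pi\,\ai^2(x+a_n)\,dx = (-a_n)^{1/2}/\pi\cdot\pi = (-a_n)^{1/2}$ (up to the constants bookkeeping) is the standard Airy norming integral $\int_0^\infty \ai^2(x-a_n)\,dx = \ai'(a_n)^2$ combined with $\ai'(a_n)^2\sim (-a_n)^{1/2}/\pi$. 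The analogous computation for $\kappa_n$, differentiating the formula $e^{\kappa_n(q)}=-\psi'(q,\lambda_n(q),0)/\dot\psi(q,\lambda_n(q),0)$ and again using the Wronskian identity, produces the stated linear term $-2\pi(-a_n)^{-1/2}\int_0^\infty \ai\,\ai'(x+a_n)\,q(x)\,dx$.

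The remaining, and genuinely delicate, step is the remainder estimate: one must show that the second-order Taylor remainder of $\lambda_n$ (resp.\ $\kappa_n$) is $O(n^{-1/3}\omega_r^2(n))$ (resp.\ $O(\omega_r^3(n))$), \emph{uniformly} on bounded subsets of $\bsA_r$. For this I would bound the second Fréchet derivative $D_q^2\lambda_n(tq)[q,q]$ uniformly in $t\in[0,1]$ and in $q$ in a ball of $\bsA_r$. Quadratic (and higher) terms in the Volterra/Neumann expansion of $\psi$ each carry an extra factor controlled by $\|q\|$ times a decaying power of $n$; the size of that decaying power is exactly where the Airy turning-point asymptotics produce the scale $n^{-1/3}$, and where the distinction $r<2$ versus $r\ge2$ enters through the integrability of $(1+x)^{-r}$ against $\ai^2$ near the turning point, giving the logarithmic loss $\log^{1/2}n$ for $r\in(1,2)$. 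Integrating by parts once, using $q'\in\sA_r$, is what upgrades the naive bound to the sharp $\omega_r(n)$; the oscillatory cancellation in $\ai\,\ai'$ is responsible for the cube (rather than square) of $\omega_r$ in the $\kappa_n$ estimate. The main obstacle is precisely this bookkeeping: tracking the turning-point region carefully enough to get the stated powers of $n$ and the correct $\log$ factor, rather than a weaker bound — the rest is implicit-function-theorem routine once the Volterra estimates of the earlier sections are in hand.
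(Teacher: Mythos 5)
Your skeleton (Volterra equation for $\psi(q,z,\cdot)$, analyticity of $\lambda_n$ and $\kappa_n$ via the implicit function theorem, first-order perturbation giving the Airy linear terms) matches the paper, and your linear terms are correct: at $q=0$ the gradients give $\int_0^\infty\ai^2(x+a_n)q\,dx/\ai'(a_n)^2$ and $-2\int_0^\infty\ai(x+a_n)\ai'(x+a_n)q\,dx/\ai'(a_n)^2$, and replacing $\ai'(a_n)^2$ by $(-a_n)^{1/2}/\pi$ is harmless at the stated accuracy. The problem is that everything you defer as ``bookkeeping'' is the actual content of the theorem, and the mechanism you propose for it is both unimplemented and, in one place, misidentified. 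First, before any Airy asymptotics can be evaluated at $\lambda_n(tq)$ one needs a localization of the eigenvalues uniform in $t\in[0,1]$ and on bounded sets; the paper gets $\lambda_n(q)=-a_n+O(n^{-2/3+\epsilon})$ by a Rouché argument (Lemma~\ref{lem:crude-asymp-eigenvalues}) and the denominator asymptotics $\norm{\psi(q,\lambda_n,\cdot)}_2^2=(\tfrac32\pi n)^{1/3}\left[1+O(\omega_r(n))\right]$ (Lemma~\ref{lem:denominator}); your sketch assumes this control implicitly but never produces it.

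Second, your remainder strategy differs from the paper's and has a gap precisely where the exponents are decided. The paper never bounds second Fréchet derivatives: it writes $\lambda_n(q)+a_n=\int_0^1\inner{\eta_n^2(tq,\cdot)}{q}\,dt$ (and the analogue for $\kappa_n$ via Proposition~\ref{lem:norming-constant-real-analytic}), estimates the gradient at $tq$ directly, and replaces $\lambda_n(tq)$ by $-a_n$ with a mean-value argument. Your plan instead needs uniform bounds $D^2_q\lambda_n(tq)[q,q]=O\bigl(n^{-1/3}\omega_r^2(n)\bigr)$ and $D^2_q\kappa_n(tq)[q,q]=O\bigl(\omega_r^3(n)\bigr)$, and nothing in the proposal produces them; the naive count ``each pairing with $q$ gives $\omega_r$'' yields only $\omega_r^2$ for $\kappa_n$. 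In the paper the extra factor does \emph{not} come from ``oscillatory cancellation in $\ai\,\ai'$'' (the leading term of $\kappa_n$ is not required to be small, and oscillation of $\ai\,\ai'$ is never exploited beyond boundedness); it comes from structural cancellations in the gradient of $\kappa_n$, namely $\varXi'_n/\psi'_n-\dot{\varXi}_n/\dot{\psi}_n=O(\omega_r^2)$ and $\dot{\psi}'_n/\psi'_n-\ddot{\psi}_n/\dot{\psi}_n=O(n^{1/3}\omega_r^2)$ (Lemmas~\ref{lem:first-parenthesis} and~\ref{lem:second-parenthesis}), which rest on the refined estimates for $\dot{\psi}$, $\dot{\psi}'$, $\ddot{\psi}$ of Lemmas~\ref{lem:dot-psi-refined} and~\ref{lem:even-more-about-psi} (valid only because $q'\in\sA_r$), combined with $\inner{\eta_n^2(tq,\cdot)}{q}=O(n^{-1/3}\omega_r)$ and the integration by parts of Lemma~\ref{lem:alphas-and-betas}, plus the error bookkeeping of Lemmas~\ref{lem:Lambda-one-and-two} and~\ref{lem:last-piece}. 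Until you either establish such second-derivative bounds (which would essentially reproduce those lemmas in different packaging) or switch to the path-integral-of-the-gradient argument, the claimed remainders $O\bigl(n^{-1/3}\omega_r^2(n)\bigr)$ and especially $O\bigl(\omega_r^3(n)\bigr)$ are unsupported.
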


We remark that it is sufficient to consider $\lambda_n$ and $\kappa_n$ as maps with
domain in $\sA_r$ in order to prove real analyticity.
However, the more restrictive
assumption $q\in\bsA_r$ is required to obtain the aforementioned asymptotic formulas.
Specifically, this requirement allows us to obtain sufficiently good estimates on
(partial) derivatives with
respect to the spectral parameter of solutions to the eigenvalue problem
associated to $H_q$ (see Section~\ref{sec:estimates}); these estimates in turn yield the
desired results.

The approach developed in this work is based on the (already classical) methods introduced
by Pöschel and Trubowitz in their treatment of the inverse Dirichlet problem
in a finite interval \cite{poeschel}. Some ideas are taken from
\cite{chelkak1,chelkak2,chelkak3}, where the inverse problem for the perturbed
harmonic oscillator is investigated (in the real line as well as in the half-line with
Dirichlet boundary condition).

We intend to use our results in the study of the associated isospectral problem; this
will be the subject of a subsequent paper.

There are few papers about the spectral analysis of Stark operators in
the half-line. In particular, some results on the direct and inverse spectral problem
are discussed in \cite{lk,mk}, where the authors use transformation operator
methods and the more restrictive assumption
$q\in C^{(1)}[0,\infty)\cap L^1(\R_+,x^4 dx)$, $q(x)=o(x)$ as $x\to\infty$.

By comparison, one-dimensional Stark operators on the real line have attracted much more
attention (for rather obvious reasons); see for
instance \cite{calogero,its,khanmamedov,katchalov,liu,sukhanov}.
As it is well-known, Stark operators on the real line are
characterized by the presence of  resonances; see \cite{korot1,korot2}
for some recent developments on this subject.

This paper is structured as follows: In Section~\ref{sec:prelim} we fix notation, discuss some
properties of the spaces $\sA_r$ and $\bsA_r$, and summarize some relevant information
concerning the unperturbed operator. In Section~\ref{sec:estimates} we obtain estimates
on $\psi(q,z,x)$ and other solutions to the eigenvalue problem associated to $H_q$.
Section~\ref{sec:frechet} is devoted to the Fréchet differentiability of $\psi(q,z,x)$.
The main results concerning the eigenvalues of $H_q$ are shown in
Section~\ref{sec:eigenvalues}. In Section~\ref{sec:auxiliary} we derive a number of
auxiliary results that are necessary in dealing with the norming constants. The main statements
about the norming constants are proven in Section~\ref{sec:norming-constants}. Finally,
some results related to Airy functions are presented in the Appendix.


\section{Preliminaries}
\label{sec:prelim}

The customary notation $u' \defeq \partial_x u$ and $\dot{u} \defeq \partial_z u$ is used
throughout this work.

An order relation of the form
\[
f(n) = O(g(n))
\]
always implicitly assume $n\in\N$ and (of course) $n\to\infty$.

The norm in $L^s(\R_+)$ is denoted $\norm{\cdot}_s$.

The complexification of a real Hilbert space $\cB$ is denoted $\cB^\C$. We use the
notation $\norm{\cdot}_\cB$ to represent the norm on either $\cB$ or $\cB^\C$; the same
convention holds for the inner product.

We observe that $\sA_r^\C\subset L^1(\R_+)$ if $r>1$ in which case
$\norm{q}_1\le (r-1)^{-1/2}\norm{q}_{\sA_r}$.
However, the function
\[
x\mapsto \frac{1}{(x+2)\log(x+2)}
\]
shows that this inclusion breaks down at $r=1$. Motivated by this observation ---and
a technical requirement that will be apparent in Section~\ref{sec:estimates}---,
\textit{throughout this paper the parameter $r$ is always assumed larger than 1}.
In passing, we note $\sA_{r_2}^\C\subset \sA_{r_1}^\C$ whenever $1\le r_1<r_2$.

\medskip

Let us define
\begin{equation*}
\label{eq:function-omega}
\om(q,z) \defeq \int_0^\infty\frac{\abs{q(x)}}{\sqrt{1+\abs{x-z}}}\,dx.
\end{equation*}

\begin{lemma}
\label{lem:about-omega}
Assume $q\in\sA_r^\C$. Then,
\[
\om(q,z)
	\le C \norm{q}_{\sA_r}\times \begin{dcases}
		\left(\frac{\log(2+\abs{z})}{2+\abs{z}}\right)^{1/2},& r\in(1,2),
		\\[1mm]
		\left(2+\abs{z}\right)^{-1/2},& r\in[2,\infty),
	\end{dcases}
\]
where (the lowest possible value of) $C >0$ only depends on $r$.
\end{lemma}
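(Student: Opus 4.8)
The plan is to bound $\om(q,z)$ by splitting $\R_+$ into a region near $x=z$ (if $z>0$) and the complementary region, then apply Hölder's inequality with the weight $(1+x)^r$ on each piece. Since the inequality is homogeneous in $q$, we may assume $\norm{q}_{\sA_r}=1$; and since $\om(q,z)$ only depends on $\abs{q}$, we may take $q\ge 0$. By Cauchy--Schwarz with respect to the measure $(1+x)^r\,dx$,
\[
\om(q,z)
	\le \norm{q}_{\sA_r}
		\left(\int_0^\infty \frac{(1+x)^{-r}}{1+\abs{x-z}}\,dx\right)^{1/2},
\]
so the whole problem reduces to estimating the purely deterministic integral
\[
I(z) \defeq \int_0^\infty \frac{dx}{(1+x)^{r}\,(1+\abs{x-z})}
\]
and showing $I(z)\le C^2\,\log(2+\abs z)/(2+\abs z)$ for $r\in(1,2)$ and $I(z)\le C^2/(2+\abs z)$ for $r\ge 2$, with $C$ depending only on $r$.

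\emph{First} I would dispose of the easy case $z\le 0$: then $\abs{x-z}=x+\abs z\ge x$ and also $\abs{x-z}\ge \abs z$, so $1+\abs{x-z}\ge \tfrac12(1+\abs z + x)\ge \tfrac12\max\{1+\abs z,\,1+x\}$, whence $I(z)\le 2(2+\abs z)^{-1}\int_0^\infty (1+x)^{-r}\,dx = 2(r-1)^{-1}(2+\abs z)^{-1}$ — even better than claimed, for all $r>1$. \emph{Next}, for $z>0$ write $I(z) = I_1 + I_2 + I_3$ over the ranges $x\in[0,z/2]$, $x\in[z/2,2z]$, and $x\in[2z,\infty)$. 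On the first range $\abs{x-z}=z-x\ge z/2$, so $I_1\le \tfrac{2}{2+z}\int_0^\infty(1+x)^{-r}dx = O((2+z)^{-1})$. On the third range $\abs{x-z}=x-z\ge x/2$, so $1+\abs{x-z}\ge \tfrac12(1+x)$ and, since $1+x\ge 1+2z\ge \tfrac12(2+z)$ there, $I_3\le 2\int_{2z}^\infty (1+x)^{-r-1}dx \le \tfrac{2}{r}(1+2z)^{-r} = O((2+z)^{-r})$, which is $o((2+z)^{-1})$ since $r>1$. The middle range carries the main term: on $[z/2,2z]$ we have $(1+x)^{-r}\asymp (1+z)^{-r}$ (up to constants depending on $r$), hence
\[
I_2 \le C_r (1+z)^{-r}\int_{z/2}^{2z}\frac{dx}{1+\abs{x-z}}
	= C_r (1+z)^{-r}\cdot O(\log(2+z)),
\]
the last integral being $\le 2\int_0^{z}(1+t)^{-1}dt = 2\log(1+z)$.

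\emph{Finally} I would assemble the pieces. For $r\ge 2$, the factor $(1+z)^{-r}\log(2+z)$ from $I_2$ is dominated (for large $z$, and hence for all $z\ge 0$ after adjusting $C$) by $(2+z)^{-1}$, so $I(z)=O((2+z)^{-1})$. For $r\in(1,2)$, $I_2 = O\bigl((1+z)^{-r}\log(2+z)\bigr)$, and since $r<2$ this is \emph{not} absorbed by $(2+z)^{-1}$; instead one must check it is $O\bigl(\log(2+z)/(2+z)\bigr)$, which holds precisely because $r>1$ (so $(1+z)^{-r}\le (1+z)^{-1}$ up to constants). The other two pieces $I_1,I_3$ are $O((2+z)^{-1})$, hence also $O(\log(2+z)/(2+z))$, so the stated bound follows; taking $C^2$ to be the maximum of the finitely many constants produced gives a constant depending only on $r$. \emph{The only delicate point} is keeping track of the exponent $r$ in the middle-range estimate: the bound genuinely changes character at $r=2$ (logarithmic loss for $1<r<2$, none for $r\ge 2$), and this is exactly where the failure at $r=1$ — noted in the preliminaries via the function $1/((x+2)\log(x+2))$ — would manifest as a divergent integral, so the argument must not secretly use $r\ge$ anything stronger than $r>1$. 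Everything else is routine splitting and Cauchy--Schwarz.
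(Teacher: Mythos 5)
Your proof is correct, and it shares the paper's opening move (Cauchy--Schwarz against the weight $(1+x)^r$, reducing everything to a deterministic integral), but you estimate that integral by a genuinely different route. The paper replaces the exponent $r$ by $n=\floor{r}$ and then uses a partial-fraction decomposition of $\left[(1+\abs{x-z})(1+x)^n\right]^{-1}$, splitting at $\abs{z}$; the case distinction $n=1$ versus $n\ge 2$ is precisely what produces the logarithm in the statement for $r\in(1,2)$. You instead keep the true exponent $r$ and split $[0,z/2]\cup[z/2,2z]\cup[2z,\infty)$ with elementary comparisons. This buys something: your own estimates show $\int_0^\infty (1+x)^{-r}(1+\abs{x-z})^{-1}\,dx = O\left((2+\abs{z})^{-1}\right)$ for \emph{every} $r>1$, because the middle-range contribution $(1+z)^{-r}\log(2+z)$ is $o\left((2+z)^{-1}\right)$ as soon as $r>1$; hence you actually prove the stronger bound $\om(q,z)\le C\norm{q}_{\sA_r}(2+\abs{z})^{-1/2}$ with no logarithm even for $r\in(1,2)$ (with $C$ of order $(r-1)^{-1/2}$, degenerating as $r\to 1^+$), and the stated form follows since $\log(2+\abs{z})\ge\log 2$. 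For that reason your closing remark is slightly off: within your argument the bound does not change character at $r=2$, and the logarithmic loss for $1<r<2$ is an artifact of the paper's passage to $\floor{r}$ rather than a genuine feature of the integral with weight $(1+x)^r$; the only place $r>1$ is truly needed is the convergence of $\int_0^\infty(1+x)^{-r}dx$ (equivalently $\sA_r\subset L^1(\R_+)$). The paper's method, in exchange, gives an essentially closed-form evaluation via partial fractions with an integer exponent; both arguments yield a constant depending only on $r$, so your proposal fully establishes the lemma as stated.
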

\begin{proof}
Let $n\defeq \floor{r}$. Then, Hölder inequality implies
\[
\int_0^\infty\frac{\abs{q(x)}}{\sqrt{1+\abs{x-z}}}\,dx
	\le \left(\int_0^\infty\frac{dx}{(1+\abs{x-z})(1+x)^n}\right)^{1/2}
		\left(\int_0^\infty\abs{q(x)}^2(1+x)^r dx\right)^{1/2}.
\]
Suppose $\abs{z}\ge 1$. Then,
\[
\int_0^\infty\frac{dx}{(1+\abs{x-z})(1+x)^n}
	\le \int_0^{\abs{z}}\frac{dx}{(1+\abs{z}-x)(1+x)^n}
		 + \int_{\abs{z}}^\infty\frac{dx}{(1-\abs{z}+x)(1+x)^n}.
\]
Using partial fraction decomposition\footnote{Specifically,
\[
\frac{1}{(a\mp x)(1+x)^n}
	= \frac{1}{(1\pm a)^n(a\mp x)} \pm \sum_{l=1}^n \frac{1}{(1\pm a)^{n+1-l}(1+x)^l}.
\]
}  one obtains
\begin{equation*}
\int_0^\infty\frac{dx}{(1+\abs{x-z})(1+x)}
	\le \left(\frac{2}{(2+\abs{z})} + \frac{1}{\abs{z}}\right)\log(1+\abs{z})
	\le C_1\frac{\log{(2+\abs{z}})}{2+\abs{z}}
\end{equation*}
if $n=1$, or
\begin{multline*}
\int_0^\infty\frac{dx}{(1+\abs{x-z})(1+x)^n}
\\[1mm]
	\le \left(\frac{2}{(2+\abs{z})^n} + \frac{1}{\abs{z}^n}\right)\log(1+\abs{z})
		+ \sum_{l=1}^{n-1}\frac{1-2(1+\abs{z})^{-l}}{l\abs{z}^{n-l}}
	\le C_n\frac{1}{2+\abs{z}}
\end{multline*}
if $n\ge 2$, where the value of $C_n>0$ (that is, its infimum) depends on $n$.
Now, set
\[
C = \max\left\{\sqrt{C_1/e}, D_n\right\} \quad\text{or}\quad
C = \max\left\{\sqrt{C_n/2}, D_n\right\}
\]
depending on the value of $n$, where
\[
D_n = \max_{z:\abs{z}\le1}\left(\int_0^\infty\frac{dx}
		{(1+\abs{x-z})(1+x)^n}\right)^{1/2}
\]
to complete the proof.
\end{proof}

We make extensive use of the following solutions to the unperturbed eigenvalue problem
$- \varphi'' + x \varphi = z \varphi$ ($x\in\R_+$, $z\in\C$),
\begin{align*}
\psi_0(z,x) &\defeq \sqrt{\pi}\ai(x-z),
	\\[1mm]
\theta_0(z,x) &\defeq \sqrt{\pi}\bi(x-z),
	\\[1mm]
s_0(z,x) &\defeq -\theta_0(z,0)\psi_0(z,x) + \psi_0(z,0)\theta_0(z,x),
	\\[1mm]
c_0(z,x) &\defeq \theta_0'(z,0)\psi_0(z,x) - \psi_0'(z,0)\theta_0(z,x),
\end{align*}
where $\bi(w)$ is the Airy function of the second kind (a few facts about Airy functions are
summarized in the Appendix). The solutions $\psi_0$ and
$\theta_0$ are normalized so that
\[
W(\psi_0(z),\theta_0(z)) \defeq \psi_0(z,x)\theta_0'(z,x)-\psi_0'(z,x)\theta_0(z,x) \equiv 1.
\]
As a result of Lemma~\ref{lem:g_Ag_C}, there exists a constant $C_0>0$ such that
\begin{align}
\abs{\psi_0(z,x)}
	&\le C_0\frac{g_A(x-z)}{\sigma{(x-z)}},\label{eq:vbe-psi0}
\\[1mm]
\abs{\theta_0(z,x)}
	&\le 2C_0\frac{g_B(x-z)}{\sigma{(x-z)}},\label{eq:vbe-theta0}
\end{align}
where $\sigma(w)\defeq 1 + \abs{w}^{1/4}$, $g_A(w) \defeq \exp(-\tfrac23\re w^{3/2})$
and $g_B(w) \defeq 1/g_A(w)$.\footnote{We find stylistically more pleasant to write $g_B$
instead of $g_A^{-1}$.} Moreover,
\begin{align*}
\abs{\psi_0'(z,x)}
	&\le C_0 \sigma{(x-z)}g_A(x-z),
\\[1mm]
\abs{\theta_0'(z,x)}
	&\le 2C_0 \sigma{(x-z)}g_B(x-z).
\end{align*}
It will be useful to bear in mind the identities
\[
\dot{\psi_0}(z,x) = - \psi_0'(z,x) \quad\text{and}\quad \dot{\theta_0}(z,x) = - \theta_0'(z,x).
\]
Regarding the function $g_A$ we mention the following result,
whose translation to $g_B$ is straightforward.

\begin{lemma}
\label{lem:stupid-lemma}
Given $z\in\C\setminus\R_+$, $x\mapsto g_A(x-z)$, $x\in\R_+$, is a monotonically decreasing
map such that $g_A(x-z)\to 0$ as $x\to\infty$. Given $\lambda\in\R_+$, then $g_A(x-\lambda) = 1$
if $x\in[0,\lambda]$ and monotonically decreases to zero if $x\in(\lambda,\infty)$.
\end{lemma}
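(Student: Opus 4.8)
The plan is to study the real-valued function $x\mapsto\re(x-z)^{3/2}$ along the horizontal ray $\{x-z:x\in\R_+\}$, since $g_A(x-z)=\exp(-\tfrac23\re(x-z)^{3/2})$ and $t\mapsto e^{-\frac23 t}$ is strictly decreasing, so the monotonicity of $g_A$ is equivalent to the monotonicity (with reversed sense) of $\re(x-z)^{3/2}$. Writing $w=x-z$, so that $dw/dx=1$, and using the principal branch of the power, which is holomorphic on $\C\setminus(-\infty,0]$, I would compute $\frac{d}{dx}\re w^{3/2}=\re\frac{d}{dx}w^{3/2}=\tfrac32\re w^{1/2}$. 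In polar form $w=\rho e^{i\phi}$ with $\phi\in(-\pi,\pi)$ one has $\re w^{1/2}=\rho^{1/2}\cos(\phi/2)>0$, because $\phi/2\in(-\pi/2,\pi/2)$; hence the derivative is strictly positive, $\re w^{3/2}$ is strictly increasing, and $g_A(x-z)$ is strictly decreasing, \emph{provided the entire ray stays off the branch cut}.

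The only genuine thing to verify is therefore that the ray $\{x-z:x\ge 0\}$ avoids $(-\infty,0]$ precisely when $z\in\C\setminus\R_+$. I would split into two cases. If $\im z\ne 0$, then $\im(x-z)=-\im z\ne 0$ for every $x$, so $w$ is never real and in particular never on the cut. If $z$ is a negative real, say $z=-c$ with $c>0$, then $w=x+c\ge c>0$, again off the cut. Together these exhaust $\C\setminus\R_+$, which yields the monotonicity claim of the first part.

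For the decay as $x\to\infty$, I would observe that along the ray $\abs{w}=\abs{x-z}\to\infty$ while $\arg w\to 0$, so $\cos(\tfrac32\arg w)\to 1$ and consequently $\re w^{3/2}=\abs{w}^{3/2}\cos(\tfrac32\arg w)\ge\tfrac12\abs{w}^{3/2}$ for all large $x$; thus $\re w^{3/2}\to+\infty$ and $g_A(x-z)\to 0$. The second part is then a direct computation on the real axis: for $\lambda\in\R_+$ and $x\in[0,\lambda]$ the argument $w=x-\lambda$ is a non-positive real, so in the principal branch $w^{3/2}=-i\abs{w}^{3/2}$ has vanishing real part and $g_A(x-\lambda)=1$; while for $x\in(\lambda,\infty)$ one has $w=x-\lambda>0$ and $g_A(x-\lambda)=\exp(-\tfrac23(x-\lambda)^{3/2})$, which decreases strictly to zero.

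The argument is elementary and I do not anticipate a real obstacle; the only point demanding care is the bookkeeping of the principal branch of $w^{1/2}$ and $w^{3/2}$ and the verification that the relevant ray never meets the cut $(-\infty,0]$, after which everything reduces to the sign of a cosine.
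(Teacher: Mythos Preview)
Your argument is correct. You take a somewhat different route from the paper, however. The paper parametrizes the horizontal line $\{x-z:x\in\R\}$ (for $z\in\C_-$) by its polar angle $\gamma\in(0,\pi)$, writing $x-z=\tfrac{\abs{\im z}}{\sin\gamma}e^{i\gamma}$, and obtains the closed form $\re(x-z)^{3/2}=\abs{\im z}^{3/2}\cos(\tfrac32\gamma)/(\sin\gamma)^{3/2}$; it then checks that this is decreasing in $\gamma$ and that $x\mapsto\gamma$ is decreasing, so the composition is increasing in $x$. Your approach---differentiating $\re w^{3/2}$ along the ray and observing $\tfrac{d}{dx}\re w^{3/2}=\tfrac32\re w^{1/2}=\tfrac32\abs{w}^{1/2}\cos(\tfrac12\arg w)>0$---is more direct and avoids the auxiliary parametrization; it also treats both half-planes and the negative real axis uniformly, whereas the paper handles these cases separately. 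The price you pay is having to verify explicitly that the ray avoids the branch cut, which you do correctly. Both arguments are elementary; yours is a bit cleaner. One small remark: when $x\in[0,\lambda]$ and $w=x-\lambda\le 0$ lies on the cut, the value of $w^{3/2}$ depends on the chosen limit, but $\re w^{3/2}=0$ from either side, so $g_A$ is unambiguously equal to $1$ there---you might state this explicitly.
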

\begin{proof}
Suppose $z\in\C_-$. A simple computation shows that, given $x\in\R$, there
exists a unique $\gamma\in(0,\pi)$ such that
\begin{equation*}
x-z= \frac{\abs{\im z}}{\sin\gamma}e^{i\gamma}.
\end{equation*}
Then,
\begin{equation*}
\re(x-z)^{3/2} = \abs{\im z}^{3/2}\frac{\cos\frac32\gamma}{(\sin\gamma)^{3/2}}.
\end{equation*}
The right hand side of the last equation is decreasing
as a function of $\gamma$. But the map $x\mapsto \gamma$ is also decreasing so
the map $x\mapsto \re(x-z)^{3/2}$ is increasing. This in turn implies the assertion.
Clearly, a similar reasoning works if $z\in\C_+$. The statement is obvious for $z\in\R$.
\end{proof}

The other pair of solutions, $s_0$ and $c_0$, obey the boundary conditions
\begin{equation*}
s_0(z,0) = c_0'(z,0) = 0,\quad s_0'(z,0) = c_0(z,0) = 1.
\end{equation*}
Let us define
\[
\ch(z,x)\defeq g_B(-z)g_A(x-z) + g_A(-z)g_B(x-z).
\]
Then, from the bounds on $\psi_0$, $\theta_0$ and their derivatives, one can readily obtain
\begin{align*}
\abs{s_0(z,x)}
	&\le 2C_0^2\frac{\ch(z,x)}{\sigma(z)\sigma(x-z)},
\\[1mm]
\abs{c_0(z,x)}
	&\le 2C_0^2 \frac{\sigma(z)}{\sigma(x-z)}\ch(z,x),
\end{align*}
as well as
\begin{align*}
\abs{s_0'(z,x)}
	&\le 2C_0^2\frac{\sigma(x-z)}{\sigma(z)}\ch(z,x),
\\[1mm]
\abs{c_0'(z,x)}
	&\le 2C_0^2 \sigma(z)\sigma(x-z)\ch(z,x).
\end{align*}
Let us also note the identity
\begin{equation*}
\dot{s}_0(z,x) = c_0(z,x) - s_0'(z,x),
\end{equation*}
which in turn implies
\begin{equation*}
\abs{\dot{s}_0(z,x)}
	\le 2C_0^2\left(\frac{\sigma(x-z)}{\sigma(z)} + \frac{\sigma(z)}{\sigma(x-z)}\right)\ch(z,x).
\end{equation*}

Finally, let us take a look at the Green function $J_0(z,x,y$) for the initial-value problem
related to the equation
\[
- \varphi'' + \left[x + q(x)\right]\varphi = z \varphi,\quad x\in\R_+,\quad z\in\C.
\]
Clearly, one has
\begin{align}
J_0(z,x,y)
	&= s_0(z,x)c_0(z,y) - c_0(z,x)s_0(z,y)\nonumber
\\[1mm]
	&= \theta_0(z,x)\psi_0(z,y) - \psi_0(z,x)\theta_0(z,y).\label{eq:green-function-0}
\end{align}
The second identity implies the inequalities
\begin{gather}
\abs{J_0(z,x,y)}
	\le 2C_0^2\frac{g_A(x-z)g_B(y-z) + g_B(x-z)g_A(y-z)}
		{\sigma(x-z)\sigma(y-z)},\label{eq:bound-J_0}
\\[1mm]
\abs{\partial_xJ_0(z,x,y)}
	\le 2C_0^2 \frac{\sigma(x-z)}{\sigma(y-z)}
		\left[g_A(x-z)g_B(y-z) + g_B(x-z)g_A(y-z)\right],\label{eq:partial-x-J_0}
\intertext{and}
\abs{\partial_yJ_0(z,x,y)}
	\le 2C_0^2 \frac{\sigma(y-z)}{\sigma(x-z)}
		\left[g_A(x-z)g_B(y-z) + g_B(x-z)g_A(y-z)\right].\label{eq:partial-y-J_0}
\end{gather}
An upper bound for the partial derivative $\partial_z J_0(z,x,y)$ follows immediately after
noticing that
\begin{equation}
\label{eq:dot-J_0}
\partial_z J_0(z,x,y) = - \partial_x J_0(z,x,y) - \partial_y J_0(z,x,y),
\end{equation}
as it is apparent from \eqref{eq:green-function-0}.


\section{Estimates on solutions and related functions}
\label{sec:estimates}

\subsection[One pair of solutions]
	{\protect\boldmath The solutions $\psi(q,z,x)$ and $\theta(q,z,x)$}

\begin{lemma}
\label{lem:basic-estimates}
Consider $q\in\sA_r^\C$.
Let $\psi(q,z,x)$ be the solution to the Volterra equation
\begin{equation}
\label{eq:volterra-psi}
\psi(q,z,x) = \psi_0(z,x) - \int_x^{\infty}J_0(z,x,y)\psi(q,z,y)q(y)dy\quad (z\in\C).
\end{equation}
The following statements hold true:
\begin{enumerate}[label={(\roman*)}]

\item \label{eq:psi} The function $\psi(q,z,\cdot)$ is the unique (up to a constant factor)
solution to the equation
\begin{equation}
\label{eq:stark-equation}
-\varphi'' + \left[x + q(x)\right]\varphi = z\varphi,
\end{equation}
that lies in $L^2(\R_+)$ for every $z\in\C$. This solution can be written as
\begin{equation*}
\psi(q,z,x) = \psi_0(z,x) + \varXi(q,z,x),
\end{equation*}
where
\begin{equation}
\label{eq:bound-psi}
\abs{\varXi(q,z,x)}
	\le C\om(q,z)e^{C\om(q,z)}
	\frac{g_A(x-z)}{\sigma(x-z)}.
\end{equation}
Also, $\psi(q,\cdot,x)$ is a real entire function for every $(q,x)\in\sA_r\times\R_+$.

\item \label{eq:psi-prime} Moreover,
\begin{equation*}
\psi'(q,z,x) = \psi_0'(z,x) + \varXi'(q,z,x),
\end{equation*}
where
\begin{equation}
\label{eq:bound-psi-prime}
\abs{\varXi'(q,z,x)}
	\le C\om(q,z)e^{C\om(q,z)}\sigma(x-z) g_A(x-z),
\end{equation}
and $\psi'(q,\cdot,x)$ is also a real entire function for every $(q,x)\in\sA_r\times\R_+$.

\item \label{eq:psi-dot} Finally,
\begin{equation}
\label{eq:psi-dot-crude}
\dot{\psi}(q,z,x) = - \psi_0'(z,x) + \dot{\varXi}(q,z,x),
\end{equation}
where
\begin{equation}
\label{eq:bound-psi-dot-crude}
\abs{\dot{\varXi}(q,z,x)}
	\le C \norm{q}_1 e^{C\norm{q}_1}\sigma(x-z) g_A(x-z).
\end{equation}
\end{enumerate}
\end{lemma}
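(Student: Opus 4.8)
The plan is to set up the Volterra equation~\eqref{eq:volterra-psi} as a fixed-point problem and solve it by the standard method of successive approximations, then extract the pointwise bounds from the series and differentiate term by term. First I would define the iterates $\varphi_0 \defeq \psi_0(z,\cdot)$ and
\[
\varphi_{k+1}(x) \defeq -\int_x^\infty J_0(z,x,y)\varphi_k(y)q(y)\,dy,
\]
so that formally $\psi(q,z,\cdot) = \sum_{k\ge 0}\varphi_k$. The key estimate is that, using the bound~\eqref{eq:bound-J_0} on $J_0$ together with the bound~\eqref{eq:vbe-psi0} on $\psi_0$, one controls the ``mixed'' product $g_A(x-z)g_B(y-z)$ appearing in $J_0(z,x,y)$ against $g_A(x-z)$ for $y\ge x$: indeed, by Lemma~\ref{lem:stupid-lemma} the map $y\mapsto g_A(y-z)$ is decreasing, hence $g_A(y-z)\le g_A(x-z)$ and the ratio $g_A(y-z)g_B(x-z)\le 1$ is bounded when $y\ge x$; similarly $g_B(y-z)g_A(x-z) = g_A(x-z)/g_A(y-z)$ needs the companion monotonicity. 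The upshot should be that each application of the integral operator produces a factor $C\,\omega(q,z)$ coming from the weight $\sigma(y-z)^{-2}\abs{q(y)}$ integrated against the definition of $\omega(q,z)$ (here the $1/4$ powers in $\sigma$ combine to give $(1+\abs{y-z})^{-1/2}$, matching the integrand in $\omega$). Iterating, $\abs{\varphi_k(x)} \le C_0 (C\omega(q,z))^k \, g_A(x-z)/\sigma(x-z) / k!$ or at worst geometric in $C\omega(q,z)$, giving absolute and uniform convergence on $\R_+$ once $\omega(q,z)$ is finite — which it is, by Lemma~\ref{lem:about-omega}, for $q\in\sA_r^\C$ with $r>1$. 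Summing $k\ge 1$ yields $\varXi = \sum_{k\ge 1}\varphi_k$ and the bound~\eqref{eq:bound-psi}.

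For part~\ref{eq:psi-prime}, I would differentiate the Volterra equation in $x$. The boundary term from differentiating the lower limit vanishes because $J_0(z,x,x)=0$ (the Green function for the initial value problem vanishes on the diagonal), so
\[
\psi'(q,z,x) = \psi_0'(z,x) - \int_x^\infty \partial_x J_0(z,x,y)\,\psi(q,z,y)q(y)\,dy,
\]
and now one uses~\eqref{eq:partial-x-J_0} in place of~\eqref{eq:bound-J_0}. The extra factor $\sigma(x-z)$ relative to $\sigma(x-z)^{-1}$ in~\eqref{eq:bound-J_0} is exactly the discrepancy between~\eqref{eq:bound-psi} and~\eqref{eq:bound-psi-prime}; the $y$-dependence is unchanged, so the same $\omega(q,z)$-geometric-series bound carries over, using that $\psi(q,z,\cdot)$ is already controlled by part~\ref{eq:psi}. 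That $\psi(q,\cdot,x)$ and $\psi'(q,\cdot,x)$ are entire in $z$ follows from uniform-on-compacts convergence of the series of entire functions $\varphi_k$; reality for real $q$ is clear since $\psi_0$, $\theta_0$, $J_0$ are all real on the real axis in $z$. Uniqueness of the $L^2$ solution up to a constant is a consequence of the limit-point nature of the problem at $+\infty$ (the potential $x+q(x)\to+\infty$), which can be cited or argued from the exponential decay rate $g_A$.

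For part~\ref{eq:psi-dot}, differentiate in $z$ instead. Using~\eqref{eq:dot-J_0}, $\partial_z J_0 = -\partial_x J_0 - \partial_y J_0$, and $\dot\psi_0 = -\psi_0'$, one gets an integral equation for $\dot\psi$ whose kernel is bounded by~\eqref{eq:partial-x-J_0} and~\eqref{eq:partial-y-J_0}; here the extra $\sigma(y-z)$ from $\partial_y J_0$ spoils the clean $\omega$-bound, so one instead estimates $\sigma(y-z)/\sigma(x-z)\le$ (polynomial) and absorbs the surviving $\sigma$ against $q$ directly in $L^1$, producing the $\norm{q}_1$-dependence of~\eqref{eq:bound-psi-dot-crude} rather than $\omega(q,z)$ — this is why the statement is labelled ``crude.'' The main obstacle throughout is bookkeeping the interplay of the two monotone envelopes $g_A$, $g_B$ in the mixed term of $J_0$ over the range $y\ge x$: one must check that both $g_A(y-z)/g_A(x-z)$ and its reciprocal stay suitably bounded so that, after factoring out the target envelope ($g_A(x-z)/\sigma(x-z)$ or $\sigma(x-z)g_A(x-z)$), what remains is genuinely integrable against $\abs{q(y)}/\sigma(y-z)^{?}$ with the exponent matching $\omega(q,z)$; Lemma~\ref{lem:stupid-lemma} is the tool for this, but applying it uniformly in $z\in\C\setminus\R_+$ and then passing to $z$ near $\R_+$ by continuity is the delicate point.
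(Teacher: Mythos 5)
Your proposal is correct and follows essentially the same route as the paper: Picard iteration of the Volterra equation, with the mixed $g_Ag_B$ terms of $J_0$ tamed by the monotonicity in Lemma~\ref{lem:stupid-lemma} and $\sigma(y-z)^{-2}\le(1+\abs{y-z})^{-1/2}$ producing one factor of $\om(q,z)$ per iteration, the analogous treatment of $\psi'$ via $\partial_x J_0$, and a second Volterra equation for $\dot{\psi}$ obtained from $\partial_z J_0=-\partial_xJ_0-\partial_yJ_0$, whose loss of the $\sigma(y-z)^{-2}$ weight is exactly what forces the cruder $\norm{q}_1$ bound in \eqref{eq:bound-psi-dot-crude}. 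One caution: your fallback ``at worst geometric in $C\om(q,z)$'' would not suffice, since $\om(q,z)$ need not be small; the $1/k!$ coming from the nested integrals over $[x,\infty)$ (the identity quoted in the paper's footnote) is what yields convergence for all $z\in\C$ and every $q\in\sA_r^\C$, and hence the $C\om(q,z)e^{C\om(q,z)}$ form of \eqref{eq:bound-psi}.
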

\begin{proof}
In what follows we use the abbreviated notation $\psi(z,x) \defeq \psi(q,z,x)$.

\ref{eq:psi} A simple computation shows that a solution to \eqref{eq:volterra-psi}
is also a solution to \eqref{eq:stark-equation}.
Let $\psi_n(z,x)$ ($n\in\N$) be given by the recursive equation
\begin{equation*}
\psi_n(z,x) \defeq - \int_x^\infty J_0(z,x,y)\psi_{n-1}(z,y) q(y)dy.
\end{equation*}
Clearly, every such $\psi_n(\cdot,x)$ is an entire function. Using \eqref{eq:vbe-psi0},
\eqref{eq:bound-J_0} and Lemma~\ref{lem:stupid-lemma}, it is not difficult to
verify\footnote{Here we use the well-known identity
\begin{equation*}
\label{eq:identity-Poschel}
\int_x^\infty\int_{y_1}^\infty\cdots\int_{y_{n-1}}^\infty\prod_{l=1}^n
h(y_l)\,dy_1\cdots dy_n
	= \frac{1}{n!}\left[\int_x^\infty h(y) dy\right]^n.
\end{equation*}}
that
\begin{equation*}
\abs{\psi_n(z,x)}
\le \frac{4^n}{n!}C_0^{2n+1}\frac{g_A(x-z)}{\sigma(x-z)}
		\left(\int_x^{\infty}\frac{\abs{q(y)}}{\sigma(y-z)^2}dy\right)^n.
\end{equation*}
Since, in addition,
\[
\int_x^{\infty}\frac{\abs{q(y)}}{\sigma(y-z)^2}dy \le \om(q,z),
\]
we can obtain a solution to \eqref{eq:volterra-psi} as
\begin{equation}
\label{eq:series-for-psi}
\psi(z,x) = \sum_{n=0}^\infty \psi_n(z,x),
\end{equation}
where the convergence is uniform on bounded subsets of $\sA_r^\C\times\C\times\R_+$.
As a result, $\psi(q,\cdot,x)$ is an entire function, real entire whenever $q\in\sA_r$.
Moreover, \eqref{eq:bound-psi} implies that the solution so obtained
lies in $L^2(\R_+)$, hence it is unique (up to a constant multiple) because $H_q$ is in the
limit point case at $+\infty$.

The proof of \ref{eq:psi-prime} is analogous; for later use we note that
\begin{equation*}
\psi'(z,x) = \sum_{n=0}^\infty \psi_n'(z,x),
\end{equation*}
where
\begin{equation}
\label{eq:psi-prime-n}
\abs{\psi_n'(z,x)}
\le \frac{4^n}{n!}C_0^{2n+1} \sigma(x-z) g_A(x-z)
		\left(\int_x^{\infty}\frac{\abs{q(y)}}{\sigma(y-z)^2}dy\right)^n
\end{equation}
so the convergence is again uniform on bounded subsets of $\sA_r^\C\times\C\times\R_+$.

Finally, let us show \ref{eq:psi-dot}. Clearly, $\dot{\psi}(z,x)$ obeys the integral equation
\begin{equation}
\label{eq:dot-volterra}
\dot{\psi}(z,x) = \dot{\psi}_0(z,x)
	- \int_{x}^{\infty} \partial_z J_0(z,x,y) \psi(z,y) q(y) dy
	- \int_{x}^{\infty} J_0(z,x,y) \dot{\psi}(z,y) q(y) dy.
\end{equation}
Let $\rho_n(z,x)$ ($n\in\N$) be given by the recursive formula
\begin{equation*}
\rho_n(z,x)
	\defeq -\int_x^\infty \partial_z J_0(z,x,y)\psi_{n-1}(z,y)q(y)dy
		- \int_x^\infty J_0(z,x,y)\rho_{n-1}(z,y)q(y)dy,
\end{equation*}
along with $\rho_0(z,x)\defeq\dot{\psi_0}(z,x)$.
Resorting to \eqref{eq:vbe-psi0}, \eqref{eq:partial-x-J_0}, \eqref{eq:partial-y-J_0},
\eqref{eq:dot-J_0}, and Lemma~\ref{lem:stupid-lemma}, one can verify that
\begin{align*}
\abs{\rho_n(z,x)}
	&\le 4^nC_0^{2n+1}\sigma(x-z)g_A(x-z)\frac{1}{n!}
	\left(\int_x^\infty \frac{\abs{q(y)}}{\sigma(y-z)^2}dy\right)^n  \nn
\\[1mm]
	&\qquad + 2^{2n+1}C_0^{2n+1}\frac{g_A(x-z)}{\sigma(x-z)}\frac{1}{(n-1)!}
	\left(\int_x^\infty \abs{q(y)} dy\right)^{n} \nn
\\[1mm]
	&\le 4^nC_0^{2n+1}g_A(x-z)\left[\sigma(x-z)
		\frac{\om(q,z)^n}{n!}
		+ \frac{2}{\sigma(x-z)}\frac{\norm{q}_1^n}{(n-1)!}\right],
\end{align*}
from which \eqref{eq:bound-psi-dot-crude} follows.
\end{proof}

The next lemma provides and characterizes another solution, denoted $\theta(q,z,x)$,
to the eigenvalue problem \eqref{eq:stark-equation};
its proof is omitted since it is similar to the proof of Lemma~\ref{lem:basic-estimates}.
Lemma~\ref{lem:wronskian} then shows that this second solution is indeed linearly
independent of $\psi(q,z,x)$.

\begin{lemma}
\label{lem:basic-estimates-theta}
Assume $q\in\sA_r^\C$. Let $\theta(q,z,x)$ be the solution to the Volterra equation
\begin{equation}
\label{eq:volterra-theta}
\theta(q,z,x) = \theta_0(z,x) + \int_0^x J_0(z,x,y)\theta(q,z,y)q(y)dy \quad (z\in\C).
\end{equation}
The following statements hold true:
\begin{enumerate}[label={(\roman*)}]

\item \label{eq:theta} The function $\theta(z,\cdot)$ is a solution to equation
\eqref{eq:stark-equation} that can be written as
\begin{equation*}
\theta(q,z,x) = \theta_0(z,x) + \varGamma(q,z,x),
\end{equation*}
where
\begin{equation*}
\label{eq:bound-theta}
\abs{\varGamma(q,z,x)}
	\le C\om(q,z)e^{C\om(q,z)}\frac{g_B(x-z)}{\sigma(x-z)}.
\end{equation*}
Also, $\theta(q,\cdot,x)$ is a real entire function for every $(q,x)\in\sA_r\times\R_+$.

\item \label{eq:theta-prime} Moreover,
\begin{equation*}
\theta'(q,z,x) = \theta_0'(z,x) + \varGamma'(q,z,x),
\end{equation*}
where
\begin{equation*}
\label{eq:bound-theta-prime}
\abs{\varGamma'(q,z,x)}
	\le C\om(q,z)e^{C\om(q,z)}\sigma(x-z) g_B(x-z),
\end{equation*}
and $\theta'(q,\cdot,x)$ is a real entire function for every $(q,x)\in\sA_r\times\R_+$.

\item \label{eq:theta-dot} Finally,
\begin{equation*}
\dot{\theta}(q,z,x) = - \theta_0'(z,x) + \dot{\varGamma}(q,z,x),
\end{equation*}
where
\begin{equation*}
\abs{\dot{\varGamma}(q,z,x)}
	\le C \norm{q}_1 e^{C\norm{q}_1}\sigma(x-z) g_B(x-z).
\end{equation*}
\end{enumerate}
\end{lemma}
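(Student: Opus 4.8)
The plan is to transcribe the proof of Lemma~\ref{lem:basic-estimates}, the only structural differences being that the Volterra integral in \eqref{eq:volterra-theta} runs over $[0,x]$ rather than over $[x,\infty)$, and that the roles of $g_A$ and $g_B$ are interchanged. Define recursively $\theta_n(z,x)\defeq\int_0^x J_0(z,x,y)q(y)\theta_{n-1}(z,y)\,dy$ for $n\ge1$, with $\theta_0$ as in the statement. Each $\theta_n(\cdot,x)$ is real entire in $z$ (since $\theta_0$, $J_0$ are, and integration over the fixed interval $[0,x]$ preserves this), and one checks directly, exactly as in Lemma~\ref{lem:basic-estimates}, that any solution of \eqref{eq:volterra-theta} solves \eqref{eq:stark-equation}.

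The one genuinely non-transcriptional point is the direction of monotonicity used to estimate the iterates. For $0\le y\le x$ and $z\in\C$, the $g_B$-version of Lemma~\ref{lem:stupid-lemma} gives $g_B(y-z)\le g_B(x-z)$, i.e.\ $g_A(x-z)\le g_A(y-z)$; since $g_Ag_B\equiv1$ this yields $g_A(x-z)g_B(y-z)\le 1\le g_B(x-z)g_A(y-z)$, so on $0\le y\le x$ the right-hand sides of \eqref{eq:bound-J_0}, \eqref{eq:partial-x-J_0}, \eqref{eq:partial-y-J_0} are bounded respectively by $4C_0^2\,g_B(x-z)g_A(y-z)/(\sigma(x-z)\sigma(y-z))$, by $4C_0^2\,\sigma(x-z)g_B(x-z)g_A(y-z)/\sigma(y-z)$, and by $4C_0^2\,\sigma(y-z)g_B(x-z)g_A(y-z)/\sigma(x-z)$. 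Feeding the first of these together with \eqref{eq:vbe-theta0}, $g_Ag_B\equiv1$, and $\sigma(w)^{-2}\le(1+\abs{w})^{-1/2}$ into the induction (with the same nested-integral identity as in Lemma~\ref{lem:basic-estimates}) gives
\[
\abs{\theta_n(z,x)}\le\frac{2\cdot 4^n C_0^{2n+1}}{n!}\,\frac{g_B(x-z)}{\sigma(x-z)}
	\left(\int_0^x\frac{\abs{q(y)}}{\sigma(y-z)^2}\,dy\right)^n
\le\frac{2\cdot 4^n C_0^{2n+1}}{n!}\,\frac{g_B(x-z)}{\sigma(x-z)}\,\om(q,z)^n .
\]
Summing yields uniform convergence of $\theta(q,z,x)=\sum_{n\ge0}\theta_n(z,x)$ on bounded subsets of $\sA_r^\C\times\C\times\R_+$ — hence $\theta(q,\cdot,x)$ is entire, and real entire when $q\in\sA_r$ — as well as the bound on $\varGamma\defeq\theta-\theta_0$ asserted in~\ref{eq:theta}.

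For~\ref{eq:theta-prime}, differentiating \eqref{eq:volterra-theta} in $x$ kills the boundary term (because $J_0(z,x,x)=0$) and leaves $\theta'(q,z,x)=\theta_0'(z,x)+\int_0^x\partial_xJ_0(z,x,y)q(y)\theta(q,z,y)\,dy$; the bound on $\partial_xJ_0$ above replaces the prefactor $g_B(x-z)/\sigma(x-z)$ by $\sigma(x-z)g_B(x-z)$, and the same induction and summation give the estimate in~\ref{eq:theta-prime}. For~\ref{eq:theta-dot}, differentiating \eqref{eq:volterra-theta} in $z$ gives $\dot\theta(z,x)=\dot\theta_0(z,x)+\int_0^x\partial_zJ_0(z,x,y)q(y)\theta(z,y)\,dy+\int_0^x J_0(z,x,y)q(y)\dot\theta(z,y)\,dy$ (with no boundary term, the limits being $z$-independent), which I would solve by the recursion $\varrho_n(z,x)\defeq\int_0^x\partial_zJ_0\,q\,\theta_{n-1}\,dy+\int_0^x J_0\,q\,\varrho_{n-1}\,dy$, $\varrho_0\defeq\dot\theta_0=-\theta_0'$. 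Writing $\partial_zJ_0=-\partial_xJ_0-\partial_yJ_0$ via \eqref{eq:dot-J_0}, the $\partial_yJ_0$-contribution — whose factor $\sigma(y-z)$ cancels the $\sigma(y-z)^{-1}$ from $\abs{\theta_{n-1}}$ — leaves a plain $\int_0^x\abs{q}\le\norm{q}_1$, producing a $\norm{q}_1^n/(n-1)!$ term exactly as in the bound on $\rho_n$ in Lemma~\ref{lem:basic-estimates}, while the $\partial_xJ_0$- and $J_0$-pieces reproduce the $\om(q,z)$-dependence; summing and using $\om(q,z)\le\norm{q}_1$ and $g_B(x-z)/\sigma(x-z)\le\sigma(x-z)g_B(x-z)$ gives the bound on $\dot\varGamma$ in~\ref{eq:theta-dot}.

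I expect no real obstacle: the argument is essentially a transcription of the proof of Lemma~\ref{lem:basic-estimates}. The only step requiring genuine care is the interchange $g_A\leftrightarrow g_B$, which forces one to use the \emph{increasing} (rather than decreasing) monotonicity of Lemma~\ref{lem:stupid-lemma}, together with the comparison $g_A(x-z)g_B(y-z)\le g_B(x-z)g_A(y-z)$ valid precisely on $0\le y\le x$, in order to extract $g_B(x-z)$ from $\int_0^x$; verifying the resulting simplifications of \eqref{eq:bound-J_0}, \eqref{eq:partial-x-J_0}, \eqref{eq:partial-y-J_0} on that range is the one place where the proof is not literally copied.
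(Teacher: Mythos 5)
Your proposal is correct and is essentially the proof the paper intends: the paper omits it precisely because it is the transcription of the proof of Lemma~\ref{lem:basic-estimates} with the Volterra integral over $[0,x]$ and $g_A\leftrightarrow g_B$ interchanged, which is what you carry out. You also correctly identify and handle the only non-mechanical point, namely using the reversed monotonicity $g_B(y-z)\le g_B(x-z)$ for $0\le y\le x$ to bound $J_0$, $\partial_xJ_0$, $\partial_yJ_0$ by the single product $g_B(x-z)g_A(y-z)$ before running the same iteration, summation, and $z$-differentiation arguments.
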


\begin{lemma}
\label{lem:wronskian}
Suppose $q\in\sA_r^\C$. Then $W(\psi(q,z),\theta(q,z))\equiv 1$.
\end{lemma}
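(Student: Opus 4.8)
The plan is to compute the Wronskian $W(\psi(q,z),\theta(q,z)) = \psi\,\theta' - \psi'\,\theta$ by evaluating it at the endpoint $x=0$, exploiting the fact that the Wronskian of two solutions of \eqref{eq:stark-equation} is constant in $x$ (since the equation has no first-order term). So it suffices to show $W(\psi(q,z),\theta(q,z))\big|_{x=0} = 1$.

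First I would read off from the Volterra equations \eqref{eq:volterra-psi} and \eqref{eq:volterra-theta} the values of the solutions and their derivatives at $x=0$. For $\theta$, equation \eqref{eq:volterra-theta} evaluated at $x=0$ gives immediately $\theta(q,z,0) = \theta_0(z,0)$ and, after differentiating in $x$ (the integral term contributes nothing at $x=0$ because the integration range collapses and the boundary term $J_0(z,0,0)=0$), also $\theta'(q,z,0) = \theta_0'(z,0)$. For $\psi$, equation \eqref{eq:volterra-psi} at $x=0$ gives $\psi(q,z,0) = \psi_0(z,0) - \int_0^\infty J_0(z,0,y)\psi(q,z,y)q(y)\,dy$, and differentiating in $x$ gives $\psi'(q,z,0) = \psi_0'(z,0) - \int_0^\infty \partial_x J_0(z,x,y)\big|_{x=0}\psi(q,z,y)q(y)\,dy$ (again the boundary term vanishes since $J_0(z,0,0)=0$).

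Next I would substitute these into the Wronskian at $x=0$:
\[
W(\psi(q,z),\theta(q,z))\big|_{x=0}
	= \psi(q,z,0)\theta_0'(z,0) - \psi'(q,z,0)\theta_0(z,0).
\]
Plugging in the expressions above and using $W(\psi_0(z),\theta_0(z))\equiv 1$, the ``unperturbed'' part yields exactly $1$, and the remaining terms combine into
\[
-\theta_0'(z,0)\!\int_0^\infty\! J_0(z,0,y)\psi(q,z,y)q(y)\,dy
	+\theta_0(z,0)\!\int_0^\infty\! \partial_x J_0(z,0,y)\psi(q,z,y)q(y)\,dy.
\]
Inside the integral one has, from \eqref{eq:green-function-0}, $J_0(z,0,y) = \theta_0(z,0)\psi_0(z,y) - \psi_0(z,0)\theta_0(z,y)$ and a corresponding expression for $\partial_x J_0(z,0,y)$ involving $\theta_0'(z,0)$ and $\psi_0'(z,0)$; the combination $-\theta_0'(z,0)J_0(z,0,y) + \theta_0(z,0)\partial_x J_0(z,0,y)$ collapses, by the Wronskian relation $\psi_0\theta_0' - \psi_0'\theta_0 = 1$ evaluated at $x=0$, to $-\psi_0(z,y)$ — i.e. precisely $-\psi_0(z,y)$ (up to sign bookkeeping), which is exactly the kernel appearing in the Volterra equation \eqref{eq:volterra-psi} for $\psi$ itself. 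Hence the bracketed remainder equals $\psi(q,z,0) - \psi_0(z,0)$ times $\theta$-data in a way that must cancel; more cleanly, one recognizes that the whole correction vanishes identically because it reproduces the defining relation for $\psi$. The convergence/interchange of differentiation and integration needed to justify differentiating \eqref{eq:volterra-psi} in $x$ at $x=0$ is guaranteed by the uniform estimates \eqref{eq:bound-J_0}, \eqref{eq:partial-x-J_0} and the bound \eqref{eq:bound-psi} from Lemma~\ref{lem:basic-estimates}.

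The main obstacle is the bookkeeping in the last step: one has to track the signs in $J_0$ and $\partial_x J_0$ carefully (note the asymmetry between the $x$-integral $\int_0^x$ in \eqref{eq:volterra-theta} and the $\int_x^\infty$ in \eqref{eq:volterra-psi}) and verify that the cross terms cancel exactly rather than merely approximately. An alternative, perhaps cleaner, route that avoids this is to differentiate $W(\psi(q,z,x),\theta(q,z,x))$ in $x$ directly: since both $\psi$ and $\theta$ solve \eqref{eq:stark-equation}, $\partial_x W = \psi\theta'' - \psi''\theta = \psi[(x+q-z)\theta] - [(x+q-z)\psi]\theta = 0$, so $W$ is $x$-independent; then one only needs its value as $x\to\infty$ or at $x=0$, and the asymptotic behavior $\psi\sim\psi_0$, $\theta\sim\theta_0$ (with derivatives) from Lemmas~\ref{lem:basic-estimates} and \ref{lem:basic-estimates-theta} — specifically that the $\varXi,\varGamma$ corrections are lower order relative to the oscillatory/exponential factors in $g_A g_B = 1$ — forces $W = W(\psi_0,\theta_0) = 1$. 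I would present the $x=0$ computation as the primary argument since it is elementary and exact, relegating the constancy-of-Wronskian remark to a one-line justification.
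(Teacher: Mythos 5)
There is a genuine gap, and it sits exactly at the step you yourself flagged as delicate. Carrying out your $x=0$ computation carefully, the kernel combination that appears is $\theta_0(z,0)\,\partial_xJ_0(z,0,y)-\theta_0'(z,0)\,J_0(z,0,y)$, and by \eqref{eq:green-function-0} together with $W(\psi_0,\theta_0)\equiv1$ it collapses to $\theta_0(z,y)$ --- not to $-\psi_0(z,y)$, and in particular not to the Volterra kernel $J_0(z,0,y)$ of \eqref{eq:volterra-psi}. What the $x=0$ evaluation actually gives is
\[
W(\psi(q,z),\theta(q,z)) \;=\; 1 \;+\; \int_0^\infty \theta_0(z,y)\,\psi(q,z,y)\,q(y)\,dy,
\]
and this residual integral does not vanish by any algebraic rearrangement; its vanishing for every $z$ is precisely the content of the lemma, so the assertion that ``the whole correction vanishes identically because it reproduces the defining relation for $\psi$'' is unfounded (even if the kernel had collapsed to $\pm\psi_0(z,y)$, the resulting integral would not cancel for that reason). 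The missing ingredient --- and this is how the paper closes the argument from exactly the displayed formula --- is analytic in the spectral variable: $z\mapsto W(\psi(q,z),\theta(q,z))$ is entire, the integral is bounded by $C\om(q,z)e^{C\om(q,z)}$ because $\abs{\theta_0\psi}\lesssim \sigma(y-z)^{-2}$ (here $g_Ag_B=1$), and by Lemma~\ref{lem:about-omega} this bound is uniform in $z$ and tends to $0$ as $\abs{z}\to\infty$; Liouville's theorem then forces $W$ to be constant in $z$, and the decay identifies the constant as $1$.

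Your fallback route (constancy of $W$ in $x$ plus the asymptotics of Lemmas~\ref{lem:basic-estimates} and \ref{lem:basic-estimates-theta}) does not repair this, because for fixed $z$ the corrections $\varXi,\varGamma$ are only $O(\om(q,z))$, not $o(1)$, and since $W$ is independent of $x$ no choice of evaluation point makes that error disappear. Concretely, the cross term coming from $\theta$'s correction does not vanish as $x\to\infty$: using $J_0(z,x,x)=0$ and the same Wronskian identity as above,
\[
\psi_0(z,x)\varGamma'(q,z,x)-\psi_0'(z,x)\varGamma(q,z,x)
 \;=\; \int_0^x \psi_0(z,y)\,\theta(q,z,y)\,q(y)\,dy
 \;\longrightarrow\; \int_0^\infty \psi_0(z,y)\,\theta(q,z,y)\,q(y)\,dy,
\]
so evaluating at infinity merely reproduces a residual integral of the same kind. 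In short, the algebraic bookkeeping (which is fine up to the sign/identity slip) can never prove the lemma pointwise in $z$; you need the boundedness-in-$z$ plus Liouville step --- or some equivalent argument exploiting $\om(q,z)\to0$ as $\abs{z}\to\infty$ together with analyticity in $z$ --- which your proposal omits.
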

\begin{proof}
As before, let us drop the argument $q$ for the sake of brevity. We have
\begin{equation*}
W(\psi(z),\theta(z)) = \psi(z,0)\theta'(z,0) - \psi'(z,0)\theta(z,0).
\end{equation*}
Some few computations involving \eqref{eq:volterra-psi}, \eqref{eq:volterra-theta} and their
derivatives yield
\begin{equation*}
W(\psi(z),\theta(z)) = 1 + \int_0^\infty \theta_0(z,y)\psi(z,y) q(y) dy.
\end{equation*}
However,
\begin{equation}
\label{eq:something-is-0}
\int_0^\infty \abs{\theta_0(z,y)}\abs{\psi(z,y)}\abs{q(y)} dy
	\le C\om(q,z) e^{C\om(q,z)}
\end{equation}
for some constant $C>0$. In view of Lemma~\ref{lem:about-omega}, \eqref{eq:something-is-0}
implies that $W(\psi(z),\theta(z))$ is a bounded, entire function so it is necessarily
constant. Clearly, this constant equals 1.
\end{proof}

\begin{remark}
\label{rem:useful-bounds}
Let us summarize the following consequences of Lemma~\ref{lem:basic-estimates} and
Lemma~\ref{lem:basic-estimates-theta}: There exists a ($r$-dependent) positive
constant $C_1$ such that, by defining
\[
\ub(q) \defeq C_1e^{C_1\norm{q}_{\sA_r}},
\]
one has
\begin{align*}
\abs{\psi(q,z,x)}
	&\le \ub(q)\frac{g_A(x-z)}{\sigma(x-z)}
	\\[1mm]
\abs{\psi'(q,z,x)}
	&\le \ub(q)\sigma(x-z) g_A(x-z)
	\\[1mm]
\abs{\dot{\psi}(q,z,x)}
	&\le \ub(q)\sigma(x-z) g_A(x-z)
	\\[1mm]
\abs{\theta(q,z,x)}
	&\le \ub(q)\frac{g_B(x-z)}{\sigma(x-z)}
	\\[1mm]
\abs{\theta'(q,z,x)}
	&\le \ub(q)\sigma(x-z) g_B(x-z)
	\\[1mm]
\abs{\dot{\theta}(q,z,x)}
	&\le \ub(q)\sigma(x-z) g_B(x-z),
\end{align*}
for all $q\in\sA_r^\C$. These inequalities will be used in Section~\ref{sec:frechet}.
\end{remark}

In order to deal with the asymptotics of the norming constants we need sharper estimates
of $\dot{\psi}(q,z,x)$ and its (partial) derivatives.
This requirement leads to the introduction of the space $\bsA_r^\C$ and, with it, the
function
\[
\bom(q,z) \defeq \om(q,z) + \om(q',z)\quad (q\in\bsA_r^\C).
\]

\begin{lemma}
\label{lem:dot-psi-refined}
Suppose $q\in\bsA_r^\C$.
Then, the error term $\dot{\varXi}(q,z,x)$ in \eqref{eq:psi-dot-crude} obeys
\begin{equation}
\label{eq:bound-psi-dot}
\abs{\dot{\varXi}(q,z,x)}
	\le C\bom(q,z)e^{C\bom(q,z)} \sigma(x-z) g_A(x-z).
\end{equation}
\end{lemma}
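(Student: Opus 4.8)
The plan is to refine the estimate \eqref{eq:bound-psi-dot-crude} by redoing the iteration that produced it, but now extracting a factor of $\bom(q,z)$ in place of each factor of $\norm{q}_1$ that appeared there. Recall from the proof of Lemma~\ref{lem:basic-estimates}\ref{eq:psi-dot} that $\dot\psi(q,z,\cdot)$ solves the Volterra equation \eqref{eq:dot-volterra} and is represented by the series $\dot\psi = \sum_{n\ge0}\rho_n$, where the troublesome contribution came from the term $-\int_x^\infty \partial_z J_0(z,x,y)\psi_{n-1}(z,y)q(y)\,dy$; using only \eqref{eq:vbe-psi0}, \eqref{eq:partial-x-J_0}, \eqref{eq:partial-y-J_0} and \eqref{eq:dot-J_0} the best one could do was bound $\abs{\partial_z J_0}$ by $\sigma(x-z)\sigma(y-z)^{-1}$ times an exponential factor, and then the integral $\int_x^\infty\abs{q(y)}\sigma(y-z)\,\sigma(y-z)^{-1}dy = \int_x^\infty\abs{q(y)}\,dy = \norm{q}_1$ loses the decay in $z$. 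The point of assuming $q\in\bsA_r^\C$ is that this integral can be handled by integration by parts, moving the derivative off the (explicit, well-understood) Green-function factor and onto $q$.

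Concretely, first I would isolate the offending piece: write $\partial_z J_0(z,x,y) = -\partial_x J_0 - \partial_y J_0$ and treat the $\partial_y J_0$ part by integrating by parts in $y$ over $[x,\infty)$. The boundary term at $y=x$ is controlled by $\abs{q(x)}$ times the values of the other factors (which carry the desired $\sigma$- and $g_A$-decay), the boundary term at $y=\infty$ vanishes by the exponential decay of $J_0$ and $\psi$, and the remaining integral redistributes a derivative onto $q(y)$, $\psi_{n-1}(z,y)$, or onto the second-derivative $\partial_y^2 J_0$ — but $\partial_y^2 J_0$ can be eliminated using the differential equation $-\partial_y^2 J_0 + (y+q(y))J_0 = zJ_0$ that $J_0(z,\cdot,\cdot)$ satisfies in its middle argument (or simply in $y$ for the unperturbed $J_0$, where $-\partial_y^2 J_0 + yJ_0 = zJ_0$), so that no genuinely new object appears. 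After this manipulation every integral over $[x,\infty)$ is against $\abs{q(y)}\sigma(y-z)^{-2}$ or $\abs{q'(y)}\sigma(y-z)^{-1}$, each of which is $\le\om(q,z)$ or $\le\om(q',z)$, hence $\le\bom(q,z)$, after using $\int_x^\infty\abs{q(y)}\sigma(y-z)^{-2}dy\le\om(q,z)$ exactly as in Lemma~\ref{lem:basic-estimates} and, for the $q'$ terms, that $\sigma(y-z)^{-1}\le\sigma(y-z)^{-1/2}\cdot\sigma(y-z)^{-1/2}$ and $\sigma(w)^{-2}\le(1+\abs{w})^{-1/2}$, so $\int_x^\infty\abs{q'(y)}\sigma(y-z)^{-1}dy\le\int_0^\infty\abs{q'(y)}(1+\abs{y-z})^{-1/2}dy=\om(q',z)$.

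With the individual terms under control, I would re-run the factorial bookkeeping: the recursion for $\rho_n$ now yields, in place of the two-term estimate in the proof of Lemma~\ref{lem:basic-estimates}, a bound of the shape
\begin{equation*}
\abs{\rho_n(z,x)}
	\le 4^n C_0^{2n+1}\,g_A(x-z)\left[\sigma(x-z)\frac{\bom(q,z)^n}{n!}
	+ \frac{C}{\sigma(x-z)}\frac{\bom(q,z)^n}{(n-1)!}\right],
\end{equation*}
where the second bracket again comes from the integrated-by-parts piece with its single lost factorial; the crucial gain over \eqref{eq:bound-psi-dot-crude} is that $\norm{q}_1$ has been upgraded to $\bom(q,z)$ everywhere. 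Using the Pöschel-type identity $\int_x^\infty\!\cdots\!\int_{y_{n-1}}^\infty\prod h(y_l)\,dy = \tfrac1{n!}(\int_x^\infty h)^n$ to close the induction, summing over $n$, and absorbing $\sigma(x-z)^{-1}\le\sigma(x-z)$ gives $\abs{\dot\varXi(q,z,x)} = \abs{\dot\psi - \dot\psi_0}\le\sum_{n\ge1}\abs{\rho_n}\le C\,\bom(q,z)\,e^{C\bom(q,z)}\,\sigma(x-z)\,g_A(x-z)$, which is \eqref{eq:bound-psi-dot}.

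The main obstacle is the integration by parts on the $\partial_y J_0$ term: one must verify that the second-order $y$-derivative of $J_0$ that arises can be traded, via the defining ODE, for lower-order terms without reintroducing a factor of $\norm{q}_1$ with the wrong $z$-dependence, and one must carefully track the boundary term at $y=x$ (which is why a factor of $\abs{q}$, not $\abs{q'}$, appears there) so that it still contributes only to $\om(q,z)$-type integrals after it is fed back into the next level of the recursion. Everything else — the decay estimates on $g_A$, $\sigma$ and the Green function, the factorial identity, and the summation of the Neumann series — is exactly as in Lemma~\ref{lem:basic-estimates} and Lemma~\ref{lem:about-omega}.
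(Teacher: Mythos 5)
Your starting point --- using \eqref{eq:dot-J_0} together with an integration by parts to trade $\partial_z J_0$ for integrals against $q'$ --- is exactly where the paper's proof begins, but your execution has a genuine gap. After the integration by parts, the recursion for $\rho_n$ contains the two terms $-\int_x^\infty J_0\,\psi_{n-1}'\,q\,dy$ (from moving $\partial_y$ off $J_0$) and $-\int_x^\infty J_0\,\rho_{n-1}\,q\,dy$, and neither of these is ``against $\abs{q(y)}\sigma(y-z)^{-2}$''. Indeed $\abs{J_0(z,x,y)\,\psi_{n-1}'(z,y)}\lesssim \sigma(x-z)^{-1}g_A(x-z)\cdot(\cdots)^{n-1}$ with no decay in $y-z$ at all: the $\sigma(y-z)$ growth of $\psi_{n-1}'$ exactly eats the $\sigma(y-z)^{-1}$ of $J_0$, and $g_A g_B=1$. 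The same happens when the $\sigma(y-z)g_A(y-z)$-sized part of $\rho_{n-1}$ is fed into the second term. Each of these therefore produces only a factor $\norm{q}_1$, not $\bom(q,z)$, so the induction you propose closes only with $\norm{q}_1$-type factors --- i.e.\ it reproduces the crude bound \eqref{eq:bound-psi-dot-crude}, not the claimed estimate for $\rho_n$ and not \eqref{eq:bound-psi-dot}. The missing idea is a cancellation: since $\psi_n'=-\int_x^\infty\partial_x J_0\,\psi_{n-1}\,q\,dy$, the ``bad'' part $-\psi_{n-1}'$ of $\rho_{n-1}$ cancels exactly against the term $-\int_x^\infty J_0\,\psi_{n-1}'\,q\,dy$ created by your integration by parts. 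Equivalently, one must pass to the combination $\dot{\psi}+\psi'$ (iterates $\beta_n=\rho_n+\psi_n'$), which satisfies the clean Volterra equation $\dot{\psi}+\psi'=-\int_x^\infty J_0\,\psi\,q'\,dy-\int_x^\infty J_0\,(\dot{\psi}+\psi')\,q\,dy$, whose kernel is only $J_0$ and whose solution is of size $\bom(q,z)\,g_A(x-z)/\sigma(x-z)$. This is precisely how the paper proceeds; \eqref{eq:bound-psi-dot} then follows by subtraction, since $\dot{\varXi}=(\dot{\psi}+\psi')-\varXi'$ and $\varXi'$ is already controlled by \eqref{eq:bound-psi-prime}.

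Two smaller points. First, the boundary term at $y=x$ in your integration by parts vanishes identically because $J_0(z,x,x)=0$; there is no $\abs{q(x)}$ contribution, and no $\partial_y^2 J_0$ ever appears once the derivative is moved off $J_0$, so the appeal to the Airy equation is unnecessary. Second, the inequality $\int_x^\infty\abs{q'(y)}\,\sigma(y-z)^{-1}dy\le\om(q',z)$ that you invoke is false, since $\sigma(w)^{-1}\le(1+\abs{w})^{-1/2}$ fails for large $\abs{w}$; in the correct scheme the $q'$-integrals come paired with $\sigma(y-z)^{-2}$ (from $J_0\psi_{n-1}$), for which the bound by $\om(q',z)$ does hold.
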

\begin{proof}
By using \eqref{eq:dot-J_0} followed by an integration by parts, along with the fact
that $q\in\bsA^\C_r$ implies that $q$ is bounded, we transform \eqref{eq:dot-volterra}
into
\begin{multline*}
\dot{\psi}(z,x) + \psi'(z,x)
	= - \int_{x}^{\infty} J_0(z,x,y) \psi(z,y) q'(y) dy
\\
		- \int_{x}^{\infty} J_0(z,x,y) \left(\dot{\psi}(z,y) + \psi'(z,y)\right) q(y) dy.
\end{multline*}
Let $\beta_n(z,x)$ ($n\in\N$) be given by the recursive rule
\begin{equation}
\label{eq:beta}
\beta_n(z,x) \defeq - \int_{x}^{\infty} J_0(z,x,y) \psi_{n-1}(z,y) q'(y) dy
		- \int_{x}^{\infty} J_0(z,x,y) \beta_{n-1}(z,y) q(y) dy,
\end{equation}
where $\beta_{0}(z,x)= 0$. Then,
\begin{equation}
\label{eq:beta-n}
\abs{\beta_n(z,x)}
	\le 2^{n-1} \frac{4^n}{n!}C_0^{2n+1}\frac{g_A(x-z)}{\sigma(x-z)}
			\left(\int_x^{\infty}\frac{\abs{q(y)}+\abs{q'(y)}}
			{\sigma(y-z)^2}dy\right)^n.
\end{equation}
It follows that
\[
\dot{\psi}(z,x) + \psi'(z,x) = \sum_{n=1}^\infty \beta_n(z,x),
\]
where the convergence is uniform on bounded subsets of $\bsA_r^\C\times\C\times\R_+$, and
\[
\abs{\dot{\psi}(z,x) + \psi'(z,x)}
	\le 4 C_0^3\bom(q,z)e^{8C_0^ 2\bom(q,z)}\frac{g_A(x-z)}{\sigma(x-z)}.
\]
Inequality \eqref{eq:bound-psi-dot} follows from \eqref{eq:bound-psi-prime} and the
last estimate.
\end{proof}

\begin{remark}
\label{rem:dot-psi}
For later use, we note that
\[
\dot{\psi}(z,x)
	= \sum_{n=0}^\infty \dot{\psi}_n(z,x)
	= - \psi_0'(z,x) + \sum_{n=1}^\infty \left[\beta_n(z,x) - \psi_n'(z,x)\right],
\]
where
\[
\abs{\dot{\psi}_n(z,x)}
	\le \frac{4^n}{n!}C_0^{2n+1}g_A(x-z)\left[\sigma(x-z) +
		\frac{2^{n-1}}{\sigma(x-z)}\right]
		\left(\int_x^{\infty}\frac{\abs{q(y)}+\abs{q'(y)}}
				{\sigma(y-z)^2}dy\right)^n
\]
for all $n\ge 1$; this follows from \eqref{eq:psi-prime-n} and \eqref{eq:beta-n}.
\end{remark}

\begin{remark}
\label{rem:more-about-psi}
Some of the estimates concerning the norming constants will require a more refined
decomposition. Namely, assuming $q\in\sA_r^\C$,
\begin{equation*}
\varXi(q,z,x) = \psi_1(q,z,x) + \varXi^{(2)}(q,z,x),
\end{equation*}
where
\begin{equation*}
\psi_1(q,z,x) \defeq - \int_x^\infty J_0(z,x,y)\psi_0(z,y)q(y)dy,
\end{equation*}
and
\begin{equation*}
\abs{\varXi^{(2)}(q,z,x)}
	\le C\om^2(q,z)e^{C\om(q,z)}
	\frac{g_A(x-z)}{\sigma(x-z)}.
\end{equation*}
Also,
\begin{equation*}
\varXi'(q,z,x) =  \psi_1'(q,z,x) + {\varXi^{(2)\prime}}(q,z,x),
\end{equation*}
where
\begin{equation*}
\abs{{\varXi^{(2)\prime}}(q,z,x)}
	\le C\om^2(q,z)e^{C\om(q,z)}\sigma(x-z)g_A(x-z).
\end{equation*}
Moreover, if $q\in\bsA_r^\C$,
\begin{equation*}
\dot{\varXi}(q,z,x) = - \psi_1'(q,z,x) + \dot{\psi}_1^\text{res}(q,z,x)
					+ \dot{\varXi}^{(2)}(q,z,x),
\end{equation*}
where
\begin{equation*}
\dot{\psi}_1^\text{res}(q,z,x)
	\defeq -\int_x^\infty J_0(z,x,y)\psi_0(z,y) q'(y) dy,
\end{equation*}
and
\begin{equation*}
\abs{\dot{\varXi}^{(2)}(q,z,x)}
	\le C\bom^2(q,z) e^{C\bom(q,z)}\sigma(x-z)g_A(x-z).\qedhere
\end{equation*}
\end{remark}

\begin{lemma}
\label{lem:even-more-about-psi}
Suppose $q\in\bsA_r^\C$. Denote
$\norm{q}_{\underline{\mb{1}}} \defeq \norm{q}_1 + \norm{q'}_1$. Then,
\[
\dot{\psi}'(q,z,x) = -(x-z)\psi_0(z,x) + \dot{\varXi}'(q,z,x),
\]
where
\begin{equation}
\label{eq:K}
\abs{\dot{\varXi}'(q,z,x)}
	\le Ce^{C\bom(q,z)}\left[\bigl(1+\abs{x-z}^{3/4}\bigr)\bom(q,z)
		+ \sigma(x-z)\norm{q}_1\right] g_A(x-z).
\end{equation}
Also,
\begin{equation}
\label{eq:dot-beta}
\abs{\dot{\psi}'(q,z,x) + \ddot{\psi}(q,z,x)}
	\le Ce^{C\bom(q,z)}
			\left[\sigma(x-z)\bom(q,z)
			+ \frac{\norm{q}_{\underline{\mb{1}}}}{\sigma(x-z)}\right] g_A(x-z).
\end{equation}
As a consequence,
\begin{equation}
\label{eq:double-dot-Xi}
\abs{\ddot{\varXi}(q,z,x)}
	\le Ce^{C\bom(q,z)}
			\left[\bigl(1+\abs{x-z}^{3/4}\bigr)\bom(q,z)
			+ \sigma(x-z)\norm{q}_{\underline{\mb{1}}}\right] g_A(x-z).
\end{equation}
\end{lemma}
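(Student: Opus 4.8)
The statement concerns three mixed/second partial derivatives of $\psi(q,z,x)$: the mixed derivative $\dot\psi' = \partial_z\partial_x\psi$, the quantity $\dot\psi' + \ddot\psi$, and finally $\ddot\psi = \partial_z^2\psi$ itself. The natural strategy is to imitate the proof of Lemma~\ref{lem:dot-psi-refined}: differentiate the appropriate Volterra equation once more in $z$, use \eqref{eq:dot-J_0} to trade $\partial_z J_0$ for $-\partial_x J_0 - \partial_y J_0$, integrate by parts to move the $\partial_y$ onto $q$ (picking up $q'$, hence the appearance of $\bom(q,z)$ and $\norm{q}_{\underline{\mb 1}}$), set up the resulting linear integral equation as a Neumann series, and estimate term by term using the bounds \eqref{eq:bound-J_0}, \eqref{eq:partial-x-J_0}, \eqref{eq:partial-y-J_0} on $J_0$ together with Lemma~\ref{lem:stupid-lemma} and the Pöschel iterated-integral identity. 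The leading terms come from the zeroth-order pieces: $\dot\psi_0' = \partial_z\partial_x(\sqrt\pi\,\ai(x-z)) = -\ai''(x-z)\sqrt\pi = -(x-z)\sqrt\pi\,\ai(x-z) = -(x-z)\psi_0(z,x)$ using the Airy equation, which is exactly the claimed leading term; the $\sigma(x-z)\norm{q}_1$ contribution in \eqref{eq:K} and the $\norm{q}_{\underline{\mb 1}}/\sigma(x-z)$ contribution in \eqref{eq:dot-beta} will be the residual first-order terms analogous to the second line of the $\rho_n$ estimate in Lemma~\ref{lem:basic-estimates}.

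\medskip

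For \eqref{eq:K}: start from the Volterra equation \eqref{eq:volterra-psi} differentiated twice, once in $x$ and once in $z$; equivalently, differentiate the equation \eqref{eq:dot-volterra} for $\dot\psi$ in $x$. One gets
\[
\dot\psi'(z,x) = \dot\psi_0'(z,x)
	- \int_x^\infty \partial_x\partial_z J_0(z,x,y)\psi(z,y)q(y)\,dy
	- \int_x^\infty \partial_x J_0(z,x,y)\dot\psi(z,y)q(y)\,dy,
\]
where the boundary terms from differentiating the integral in $x$ vanish because $J_0(z,x,x)=0$ and $\partial_x J_0(z,x,y)|_{y=x} = \partial_z J_0(z,x,y)|_{y=x} = 0$ (diagonal values of the initial-value Green function). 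One then substitutes $\partial_z J_0 = -\partial_x J_0 - \partial_y J_0$ inside the first integral and integrates the $\partial_x\partial_y J_0$ piece by parts in $y$, transferring the derivative onto $\psi(z,y)q(y)$; this produces a $q'$ term (source of $\bom(q,z)$), a $\psi'$ term (which recombines into $\dot\psi + \psi'$, already controlled by Lemma~\ref{lem:dot-psi-refined}), and a boundary term at $y=x$ that again vanishes. The result is a linear Volterra equation for $\dot\psi'$ with inhomogeneous term $\dot\psi_0' + (\text{lower-order, already-estimated stuff})$ and kernel $\partial_x J_0$; iterating and using $|\partial_x J_0| \le 2C_0^2 \sigma(x-z)\sigma(y-z)^{-1}[\cdots]$ together with $g_A(x-z)\le g_A(y-z)$ for $y\ge x$ (Lemma~\ref{lem:stupid-lemma}) gives the bound, the factor $1+|x-z|^{3/4}$ arising because $\dot\psi_0'$ carries a factor $(x-z)\psi_0 \sim (x-z)\sigma(x-z)^{-1}g_A$ and $|x-z|\,\sigma(x-z)^{-1} \lesssim 1 + |x-z|^{3/4}$.

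\medskip

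For \eqref{eq:dot-beta}: the quantity $\dot\psi' + \ddot\psi = \partial_z(\psi' + \dot\psi)$, so I would differentiate in $z$ the equation for $\psi' + \dot\psi$ derived inside the proof of Lemma~\ref{lem:dot-psi-refined} (the one with kernel $J_0$ and sources involving $q$ and $q'$). Differentiating in $z$ hits $J_0$ (again rewrite $\partial_z J_0 = -\partial_x J_0 - \partial_y J_0$ and integrate by parts, now generating $q''$ — but no, we only have $q' \in \sA_r$; instead integrating the $\partial_y$ piece by parts onto $\psi_0(z,y)q'(y)$ and $(\dot\psi+\psi')q$ stays at the level of $q'$ and its products), hits $\psi_0$ and $\psi$ (giving $\psi_0'$, $\psi'$, controlled), and hits $\dot\psi + \psi'$ itself (giving the unknown $\dot\psi' + \ddot\psi$ back, closing the Volterra loop). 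The key point is that no second derivative of $q$ appears because the worst term $\int J_0\,\psi_0\,q''$ never materializes: in the $\partial_z$-differentiated equation the only explicit $q'$ already sits multiplied by $\psi_0$, and differentiating that in $z$ acts on $\psi_0$, not on $q'$. Estimating the Neumann series with the $J_0$ bound (which contributes the $\sigma(x-z)^{-1}$ decay) and noting the source terms carry either $\bom(q,z)$ (from $q,q'$ paired with $\sigma^{-2}$) or $\norm{q}_{\underline{\mb 1}} = \norm{q}_1 + \norm{q'}_1$ (from the boundary/low-order pieces paired with plain $\int|q|$, $\int|q'|$) yields \eqref{eq:dot-beta}. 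Finally \eqref{eq:double-dot-Xi} is immediate: $\ddot\psi = (\dot\psi' + \ddot\psi) - \dot\psi'$, and writing $\dot\psi' = -(x-z)\psi_0 + \dot\varXi'$ while $-\psi_0' - (\text{something}) $ is the leading part of $\ddot\psi_0 = \psi_0''\cdot$... — more carefully, $\ddot\psi_0 = \partial_z^2(\sqrt\pi\,\ai(x-z)) = \sqrt\pi\,\ai''(x-z) = (x-z)\psi_0(z,x)$, so $\ddot\varXi = \ddot\psi - \ddot\psi_0 = [(\dot\psi'+\ddot\psi) - \dot\psi'] - (x-z)\psi_0$; substituting the two bounds just proved and combining the $g_A$-weighted brackets gives \eqref{eq:double-dot-Xi}, with the $1+|x-z|^{3/4}$ inherited from \eqref{eq:K}.

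\medskip

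\textbf{Main obstacle.} The bookkeeping in the repeated integration-by-parts steps is the delicate part: one must verify that all boundary contributions at $y=x$ vanish (they do, because $J_0$, $\partial_x J_0$, $\partial_y J_0$ all vanish on the diagonal, as does $\partial_x\partial_y J_0 - $ well, one must check which combinations vanish), and — crucially for \eqref{eq:dot-beta} — that the procedure never forces a derivative onto a factor of $q'$, which would require $q'' \in \sA_r$, a regularity we do not assume. Organizing the differentiated Volterra equations so that each surviving term is a product already estimated in Lemmas~\ref{lem:basic-estimates}, \ref{lem:dot-psi-refined}, Remarks~\ref{rem:dot-psi} and~\ref{rem:more-about-psi}, times a $J_0$-type kernel, is where the real care lies; once that structural decomposition is in place, the Neumann-series estimates are entirely routine copies of the ones already carried out.
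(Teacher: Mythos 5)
Your overall strategy is the paper's: split off the leading term via $\dot{\psi}_0'=-(x-z)\psi_0$, get $\dot{\psi}'+\ddot{\psi}=\partial_z(\psi'+\dot{\psi})$ by differentiating the $\beta$-equation from Lemma~\ref{lem:dot-psi-refined} in $z$ (your key observation that no $q''$ can appear, because the explicit $q'$ is only multiplied by $z$-dependent factors, is exactly the point of the paper's proof of \eqref{eq:dot-beta}), and obtain \eqref{eq:double-dot-Xi} from $\ddot{\psi}_0=(x-z)\psi_0$, i.e.\ $\ddot{\varXi}=(\dot{\psi}'+\ddot{\psi})-\dot{\varXi}'$. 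Those two parts are sound and coincide with the paper.

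The treatment of \eqref{eq:K}, however, has a genuine gap. First, the boundary term in your integration by parts does \emph{not} vanish: on the diagonal one has $J_0(z,x,x)=0$ and $\partial_z J_0(z,x,y)\big|_{y=x}=0$, but
\[
\partial_x J_0(z,x,y)\big|_{y=x}
	= \theta_0'(z,x)\psi_0(z,x)-\psi_0'(z,x)\theta_0(z,x)
	= W(\psi_0(z),\theta_0(z)) = 1,
\]
so integrating $\int_x^\infty \partial_y\partial_x J_0\,\psi\,q\,dy$ by parts leaves the term $-\psi(z,x)q(x)$, which is nonzero and, if kept, is bounded only through $\norm{q}_\infty\lesssim\norm{q'}_1$, i.e.\ not in the form \eqref{eq:K} as stated. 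Second, after substituting $\partial_z J_0=-\partial_x J_0-\partial_y J_0$ you never address the $\partial_x^2 J_0$ piece; the paper disposes of it via the identity $\partial_x^2 J_0(z,x,y)=(x-z)J_0(z,x,y)$, and it is precisely this $(x-z)\int J_0\psi q$ term --- not the split-off $\dot{\psi}_0'$, which is excluded from $\dot{\varXi}'$ --- that produces the factor $\bigl(1+\abs{x-z}^{3/4}\bigr)\bom(q,z)$ through $\abs{x-z}/\sigma(x-z)\le 1+\abs{x-z}^{3/4}$. Note also that your representation is not a Volterra equation in $\dot{\psi}'$ (the unknown does not recur under the integral), so no Neumann iteration is needed, and the integration by parts is in fact superfluous: estimating $\int\partial_y\partial_x J_0\,\psi\,q$ directly, using $\abs{\partial_y\partial_x J_0}\le 2C_0^2\sigma(x-z)\sigma(y-z)\left[g_Ag_B+g_Bg_A\right]$ together with the bounds of Remark~\ref{rem:useful-bounds} and Lemma~\ref{lem:stupid-lemma}, already yields the $\sigma(x-z)\norm{q}_1$ contribution, which is how the paper proceeds (term by term through the series for $\psi$ and $\dot{\psi}$). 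With these two corrections your argument closes and is essentially the paper's.
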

\begin{proof}
Let us abbreviate $\psi(z,x) = \psi(q,z,x)$, $\varXi(z,x) = \varXi(q,z,x)$ and so on.
We have the integral equation
\begin{multline*}
\dot{\psi}'(z,x) = -(x-z)\psi_0(z,x)
	+ (x-z)\int_x^\infty J_0(z,x,y)\psi(z,y)q(y)dy
\\
	+ \int_x^\infty \partial_y\partial_x J_0(z,x,y)\psi(z,y)q(y)dy
	- \int_x^\infty \partial_x J_0(z,x,y)\dot{\psi}(z,y)q(y)dy.
\end{multline*}
Define $\dot{\psi}'_n(z,x)$ ($n\in\N$) by means of the equation
\begin{multline*}
\dot{\psi}'_n(z,x) \defeq (x-z)\int_x^\infty J_0(z,x,y)\psi_{n-1}(z,y)q(y)dy
	\\
		+ \int_x^\infty \partial_y\partial_x J_0(z,x,y)\psi_{n-1}(z,y)q(y)dy
		- \int_x^\infty \partial_x J_0(z,x,y)\dot{\psi}_{n-1}(z,y)q(y)dy,
\end{multline*}
where $\psi_n(z,x)$ is defined in the proof of Lemma~\ref{lem:basic-estimates}
and $\dot{\psi}_n(z,x)$ is defined in Remark~\ref{rem:dot-psi}. It follows that
\begin{multline*}
\abs{\dot{\psi}'_n(z,x)}
	\le 2\frac{4^n}{(n-1)!}C_0^{2n+1}\norm{q}_1\sigma(x-z)
		\bom(q,z)^{n-1} g_A(x-z)
\\
	+ \frac{4^n}{n!}C_0^{2n+1}
		\left[2^{n-1}\sigma(x-z) + \frac{\abs{x-z}}{\sigma(x-z)}\right]
		\bom(q,z)^n g_A(x-z).
\end{multline*}
Therefore,
\[
\dot{\varXi}'(z,x) = \sum_{n=1}^\infty \dot{\psi}'_n(z,x),
\]
where the convergence is uniform on bounded subsets of $\sA_r^\C\times\C\times\R_+$;
this in turn implies \eqref{eq:K}.

In accordance with the notation introduced in the proof of Lemma~\ref{lem:basic-estimates},
let us write $\dot{\beta}(z,x) \defeq \dot{\psi}'(z,x) + \ddot{\psi}(z,x)$. Then, \eqref{eq:beta}
implies the identity
\begin{multline*}
\dot{\beta}(z,x)
	= \int_{x}^{\infty} \partial_x J_0(z,x,y) \psi(z,y) q'(y) dy
		+ \int_{x}^{\infty} \partial_y J_0(z,x,y) \psi(z,y) q'(y) dy
		\\
		- \int_{x}^{\infty} J_0(z,x,y) \dot{\psi}(z,y) q'(y) dy
		+ \int_{x}^{\infty} \partial_x J_0(z,x,y) \beta(z,y) q(y) dy
		\\
		+ \int_{x}^{\infty} \partial_y J_0(z,x,y) \beta(z,y) q(y) dy
		- \int_{x}^{\infty} J_0(z,x,y) \dot{\beta}(z,y) q(y) dy.
\end{multline*}
We shall see that
\begin{equation}
\label{eq:series-for-beta}
\dot{\beta}(z,x) = \sum_{n=1}^\infty \dot{\beta}_n(z,x),
\end{equation}
where $\dot{\beta}_n(z,x)$ are given by the recursive equation
\begin{multline*}
\dot{\beta}_n(z,x)
	= \int_{x}^{\infty} \partial_x J_0(z,x,y) \psi_{n-1}(z,y) q'(y) dy
		+ \int_{x}^{\infty} \partial_y J_0(z,x,y) \psi_{n-1}(z,y) q'(y) dy
		\\
		- \int_{x}^{\infty} J_0(z,x,y) \dot{\psi}_{n-1}(z,y) q'(y) dy
		+ \int_{x}^{\infty} \partial_x J_0(z,x,y) \beta_{n-1}(z,y) q(y) dy
		\\
		+ \int_{x}^{\infty} \partial_y J_0(z,x,y) \beta_{n-1}(z,y) q(y) dy
		- \int_{x}^{\infty} J_0(z,x,y) \dot{\beta}_{n-1}(z,y) q(y) dy.
\end{multline*}
A not-so-painful computation shows that
\begin{multline*}
\abs{\dot{\beta}_n(z,x)}
	\le 2^{n+1}\frac{4^n}{(n-1)!}C_0^{2n+1}\frac{\norm{q}_{\underline{\mb{1}}}}{\sigma(x-z)}
		\bom(q,z)^{n-1} g_A(x-z)
\\
	+ 2^{n-1}\frac{4^n}{n!}C_0^{2n+1}
		\left[\sigma(x-z) + \frac{1}{\sigma(x-z)}\right]
			\bom(q,z)^n g_A(x-z),
\end{multline*}
which indeed implies the uniform convergence of \eqref{eq:series-for-beta} on bounded subsets
of $\sA_r^\C\times\C\times\R_+$, hence the estimate \eqref{eq:dot-beta}.

Finally, since
\[
\ddot{\psi}(q,z,x) = (x-z)\psi_0(z,x) + \ddot{\varXi}(q,z,x),
\]
we have
\[
\dot{\psi}'(q,z,x) + \ddot{\psi}(q,z,x) = \dot{\varXi}'(q,z,x) + \ddot{\varXi}(q,z,x),
\]
from which \eqref{eq:double-dot-Xi} follows.
\end{proof}

\subsection[The fundamental pair]
	{\protect\boldmath The fundamental pair $s(q,z,x)$ and $c(q,z,x)$}

The proofs of the following two statements (Lemmas~\ref{lem:about-s} and \ref{lem:about-c})
are based on arguments similar to those of Lemmas~\ref{lem:basic-estimates} and
\ref{lem:dot-psi-refined}; for the sake of brevity they are omitted.

\begin{lemma}
\label{lem:about-s}
Consider $q\in\sA_r^\C$.
Let $s(q,z,x)$ be the solution to the integral equation
\begin{equation*}
s(q,z,x) = s_0(z,x) + \int_0^x J_0(z,x,y)s(q,z,y)q(y) dy\quad (z\in\C).
\end{equation*}
Then, the following assertion hold true:
\begin{enumerate}[label={(\roman*)}]
\item\label{it:s}
	The function $s(q,z,x)$ is the solution to the eigenvalue equation
	\eqref{eq:stark-equation} that obey the boundary conditions
	\begin{equation*}
	s(q,z,0) = 0,\quad s'(q,z,0) = 1.
	\end{equation*}
	Also,
	\begin{equation*}
	s(q,z,x) = s_0(z,x) + \varUpsilon_s(q,z,x),
	\end{equation*}
	where
	\begin{equation*}
	\abs{\varUpsilon_s(q,z,x)}
		\le C\om(q,z)e^{C\om(q,z)}
			\frac{\ch(z,x)}{\sigma(z)\sigma(x-z)}.
	\end{equation*}

\item\label{it:s-prime}
	Moreover,
	\begin{equation*}
	s'(q,z,x) = s_0'(z,x) + \varUpsilon_s'(q,z,x),
	\end{equation*}
	where
	\begin{equation*}
	\abs{\varUpsilon_s'(q,z,x)}
		\le C\om(q,z)e^{C\om(q,z)}
			\frac{\sigma(x-z)}{\sigma(z)}\ch(z,x).
	\end{equation*}
\end{enumerate}
Besides, $s(q,\cdot,x)$ and $s'(q,\cdot,x)$ are real entire functions for every
$(q,x)\in\sA_r\times\R_+$.
\begin{enumerate}[resume*]
\item\label{it:s-dot}
	Assuming $q\in\bsA_r^\C$,
	\begin{equation*}
	\dot{s}(q,z,x) = c_0(z,x) - s_0'(z,x) + \dot{\varUpsilon}_s(q,z,x),
	\end{equation*}
	where
	\begin{equation*}
	\abs{\dot{\varUpsilon}_s(q,z,x)}
		\le C\bom(q,z)e^{C\bom(q,z)}
			\left(\frac{\sigma(x-z)}{\sigma(z)}+\frac{\sigma(z)}{\sigma(x-z)}\right)\ch(z,x).
	\end{equation*}
\end{enumerate}
\end{lemma}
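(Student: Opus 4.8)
The proof of Lemma~\ref{lem:about-s} follows the template already established for $\psi(q,z,x)$ in Lemmas~\ref{lem:basic-estimates} and~\ref{lem:dot-psi-refined}, so the plan is to indicate only the points where the argument genuinely differs. For part~\ref{it:s}, I would first verify by direct computation that any solution of the Volterra equation solves \eqref{eq:stark-equation} and inherits the initial data $s(q,z,0)=0$, $s'(q,z,0)=1$ from $s_0$, since $J_0(z,0,y)=0$ and $\partial_x J_0(z,x,y)|_{x=0}$ contributes nothing at $x=0$ (here the integral runs over $[0,x]$ rather than $[x,\infty)$, but the Picard iteration is unaffected). Then I would set $s_n(z,x)\defeq\int_0^x J_0(z,x,y)s_{n-1}(z,y)q(y)dy$ with $s_0$ as given, and push the bounds on $s_0$, $\abs{J_0}$ from \eqref{eq:bound-J_0}, and Lemma~\ref{lem:stupid-lemma} through the iterated-integral identity quoted in the footnote to Lemma~\ref{lem:basic-estimates}. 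The only new feature is the bookkeeping of the $g_B(-z)/\sigma(z)$ prefactor carried by $s_0$; this factor simply rides along untouched, because in every product $g_A(x-z)g_B(y-z)+g_B(x-z)g_A(y-z)$ appearing from $\abs{J_0}$ the relevant exponential matches $g_B$ and the $\sigma$ factors telescope exactly as before. One obtains $\abs{s_n(z,x)}\le \frac{4^n}{n!}C_0^{2n+1}\,\frac{g_B(-z)}{\sigma(z)}\frac{g_B(x-z)}{\sigma(x-z)}\,\om(q,z)^n$, and summing over $n\ge1$ gives the stated bound on $\varUpsilon_s$. Real entireness of $s(q,\cdot,x)$ follows as in Lemma~\ref{lem:basic-estimates} from the uniform convergence of the series on bounded subsets of $\sA_r^\C\times\C\times\R_+$.

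For part~\ref{it:s-prime} I would differentiate the Volterra equation in $x$, noting that the boundary term from $\partial_x\int_0^x$ vanishes since $J_0(z,x,x)=0$, so that $s'(q,z,x)=s_0'(z,x)+\int_0^x \partial_x J_0(z,x,y)s(q,z,y)q(y)dy$; the same iteration, now using \eqref{eq:partial-x-J_0} and the bound on $s_0'$, yields the claimed estimate on $\varUpsilon_s'$. For part~\ref{it:s-dot} I would proceed exactly as in Lemma~\ref{lem:dot-psi-refined}: write the integral equation for $\dot{s}(q,z,x)$, use \eqref{eq:dot-J_0} to replace $\partial_z J_0$ by $-\partial_x J_0-\partial_y J_0$, integrate by parts in $y$ to move a derivative onto $q$ (legitimate since $q\in\bsA_r^\C$ is bounded and the boundary contributions at $y=0$ and $y=x$ either vanish or are controlled), and set up the two-index Picard scheme in $(q,q')$. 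The leading term is $\dot{s}_0(z,x)=c_0(z,x)-s_0'(z,x)$, which is where the $c_0$ appears in the statement; the resulting series is dominated by $\bom(q,z)^n$ with the $g_B(-z)/\sigma(z)$ prefactor intact, and summation produces the bound on $\dot{\varUpsilon}_s$.

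The main obstacle, such as it is, is purely organizational: making sure the $z$-dependent prefactor $g_B(-z)/\sigma(z)$ (which reflects the growth of $s_0$, $c_0$ at $x=0$, in contrast to the decaying $\psi_0$, $\theta_0$) is propagated correctly through every term of the iteration without accidentally picking up extra powers of $\sigma(z)$ or $g_B(-z)$. Since $J_0(z,x,y)$ itself carries no such factor, each application of the recursion multiplies by a kernel whose worst-case size is $C_0^2(g_A(x-z)g_B(y-z)+g_B(x-z)g_A(y-z))/(\sigma(x-z)\sigma(y-z))$ and integrates $\abs{q}/\sigma(y-z)^2$, so the prefactor is genuinely inert and the estimates collapse to those already obtained for $\psi$ multiplied by $g_B(-z)/\sigma(z)$. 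Because the paper explicitly states that the proof is omitted "for the sake of brevity," I would in fact only record the recursive definitions of $s_n$ and $\dot s_n$, the per-term bounds, and a one-line remark that the summation and entireness arguments are verbatim those of Lemmas~\ref{lem:basic-estimates} and~\ref{lem:dot-psi-refined}.
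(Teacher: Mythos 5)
Your proposal is correct and is essentially the paper's intended argument: the authors omit this proof precisely because it is the same Picard iteration as in Lemmas~\ref{lem:basic-estimates} and~\ref{lem:dot-psi-refined}, now over $[0,x]$ with the inert prefactor $g_B(-z)/\sigma(z)$ carried along, which is exactly what you outline. One harmless slip: $J_0(z,0,y)=-s_0(z,y)\neq 0$ in general, so the boundary values $s(q,z,0)=0$ and $s'(q,z,0)=1$ follow not from that claim but from the empty integration range at $x=0$ together with $J_0(z,x,x)=0$, as you in fact also note.
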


\begin{lemma}
\label{lem:about-c}
Consider $q\in\sA_r^\C$.
Let $c(q,z,x)$ be the solution to the integral equation
\begin{equation*}
c(q,z,x) = c_0(z,x) + \int_0^x J_0(z,x,y)c(q,z,y)q(y) dy\quad (z\in\C).
\end{equation*}
Then,
	$c(q,z,x)$ is the entire solution (real entire whenever $q\in\sA_r$)
	to the eigenvalue equation
	\eqref{eq:stark-equation} that obey the boundary conditions
	\begin{equation*}
	c(q,z,0) = 1,\quad c'(q,z,0) = 0.
	\end{equation*}
	Also,
	\begin{equation*}
	c(q,z,x) = c_0(z,x) + \varUpsilon_c(q,z,x),
	\end{equation*}
	where
	\begin{equation*}
	\abs{\varUpsilon_c(q,z,x)}
		\le C\om(q,z)e^{C\om(q,z)}
			\frac{\sigma(z)}{\sigma(x-z)}\ch(z,x).
	\end{equation*}
\end{lemma}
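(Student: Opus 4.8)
The plan is to reuse, almost verbatim, the machinery behind Lemma~\ref{lem:basic-estimates} and Lemma~\ref{lem:about-s}: construct $c(q,z,x)$ as the sum of the Picard series attached to its Volterra equation, check that this sum solves \eqref{eq:stark-equation} with the prescribed Cauchy data at $x=0$, and then read off the bound on $\varUpsilon_c$ from termwise estimates.

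First I would settle the differential equation and the boundary conditions. Differentiating the integral equation once and using $J_0(z,x,x)=0$ gives $c'(q,z,x) = c_0'(z,x) + \int_0^x \partial_x J_0(z,x,y)\,c(q,z,y)q(y)\,dy$; differentiating once more, and using $\partial_x J_0(z,x,y)\big|_{y=x} = W(\psi_0(z),\theta_0(z)) = 1$ together with $\partial_x^2 J_0(z,x,y) = (x-z)J_0(z,x,y)$ (both immediate from \eqref{eq:green-function-0} and the normalization of $\psi_0,\theta_0$), one obtains after reinserting the integral equation that $-c'' + (x+q)c = zc$. Evaluating the equations for $c$ and $c'$ at $x=0$, where the integral over $[0,x]$ vanishes, yields $c(q,z,0) = c_0(z,0) = 1$ and $c'(q,z,0) = c_0'(z,0) = 0$; uniqueness of the solution with these Cauchy data is then automatic.

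Next I would set $\chi_0(z,x)\defeq c_0(z,x)$ and $\chi_n(z,x)\defeq \int_0^x J_0(z,x,y)\chi_{n-1}(z,y)q(y)\,dy$, so that $c(q,z,x) = \sum_{n\ge0}\chi_n(z,x)$ and $\varUpsilon_c(q,z,x) = \sum_{n\ge1}\chi_n(z,x)$. The one-step estimate is the heart of the matter. Using the bound $\abs{c_0(z,x)}\le 2C_0^2\,\sigma(z)g_B(-z)\,g_B(x-z)/\sigma(x-z)$ and \eqref{eq:bound-J_0}, and noting that for $0\le y\le x$ Lemma~\ref{lem:stupid-lemma} gives $g_A(x-z)\le g_A(y-z)$, equivalently $g_B(y-z)\le g_B(x-z)$, one checks
\[
\abs{J_0(z,x,y)}\,\frac{g_B(y-z)}{\sigma(y-z)}
	\le \frac{4C_0^2\,g_B(x-z)}{\sigma(x-z)\,\sigma(y-z)^2},
\]
the "wrong-sign" contribution $g_A(x-z)g_B(y-z)^2$ being absorbed through $g_Ag_B\equiv1$ and $g_B(y-z)\le g_B(x-z)$. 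Feeding this into the recursion, together with the nested-integral identity quoted in the proof of Lemma~\ref{lem:basic-estimates} and the inequality $\int_0^x\abs{q(y)}\sigma(y-z)^{-2}\,dy\le\om(q,z)$, an induction gives
\[
\abs{\chi_n(z,x)}
	\le 2C_0^2\,\frac{(4C_0^2)^n}{n!}\,\om(q,z)^n\,\sigma(z)g_B(-z)\,\frac{g_B(x-z)}{\sigma(x-z)} .
\]
Summing over $n\ge1$ and using $e^t-1\le t e^t$ delivers the stated bound on $\varUpsilon_c$ with $C$ a fixed multiple of $C_0^2$; summing over $n\ge0$ shows the series defines a solution of the Volterra equation, which by the first step solves \eqref{eq:stark-equation} with the required data. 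Finally, each $\chi_n(\cdot,x)$ is entire in $z$ (a finite integral of functions entire in $z$) and real for real $z$ when $q$ is real; since Lemma~\ref{lem:about-omega} bounds $\om(q,z)$ on bounded subsets of $\sA_r^\C$ uniformly in $z$, the series converges uniformly on bounded subsets of $\sA_r^\C\times\C\times\R_+$, so $c(q,\cdot,x)$ is entire, and real entire whenever $q\in\sA_r$.

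I expect the only genuine obstacle to be the bookkeeping of the exponential factors $g_A,g_B$ under the integral against $J_0$ on the finite interval $[0,x]$: one must strip the growing factor hidden in $J_0$ so that the final estimate retains only $g_B(x-z)$ (besides the boundary weight $\sigma(z)g_B(-z)$ inherited from $c_0$), and this is precisely the role of the monotonicity in Lemma~\ref{lem:stupid-lemma}. Everything else is the same Picard bookkeeping already carried out in the (omitted) proof of Lemma~\ref{lem:about-s}.
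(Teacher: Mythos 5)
Your proposal is correct and follows exactly the route the paper intends: the proof of Lemma~\ref{lem:about-c} is omitted there precisely because it is the same Picard-iteration argument as in Lemma~\ref{lem:basic-estimates}, with the bound on $c_0$, the estimate \eqref{eq:bound-J_0}, the monotonicity of $g_A$ from Lemma~\ref{lem:stupid-lemma} to absorb the $g_A(x-z)g_B(y-z)^2$ term, and the nested-integral identity giving the $\om(q,z)^n/n!$ factors. Your verification of the Cauchy data via $J_0(z,x,x)=0$, $\partial_xJ_0(z,x,y)\vert_{y=x}=1$ and $\partial_x^2J_0=(x-z)J_0$, and the uniform-convergence argument for (real) entirety, are all sound.
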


\begin{remark}
\label{rem:even-more-about-s}
Later we shall make use of the decomposition
\begin{equation*}
\varUpsilon_s(q,z,x) = s_1(q,z,x) + \varUpsilon^{(2)}_s(q,z,x),
\end{equation*}
where
\begin{equation*}
s_1(q,z,x) = \int_0^x J_0(z,x,y)s_0(z,y)q(y) dy
\end{equation*}
and
\begin{equation*}
\abs{\varUpsilon^{(2)}_s(q,z,x)}
	\le C\om^2(q,z)e^{C\om(q,z)}
		\frac{\ch(z,x)}{\sigma(z)\sigma(x-z)},
\end{equation*}
of course under the assumption $q\in\sA_r^\C$. If moreover $q\in\bsA_r^\C$,
we also have
\begin{equation*}
\dot{\varUpsilon}_s(q,z,x) = c_1(q,z,x) - s'_1(q,z,x) + \dot{s}_1^\text{res}(q,z,x)
	+ \dot{\varUpsilon}^{(2)}_s(q,z,x),
\end{equation*}
where
\begin{gather*}
c_1(q,z,x)
	= \int_0^x J_0(z,x,y)c_0(z,y) q(y) dy,
\\[1mm]
\dot{s}_1^\text{res}(q,z,x)
	= \int_0^x J_0(z,x,y)s_0(z,y) q'(y) dy,
\end{gather*}
and
\begin{equation*}
\abs{\dot{\varUpsilon}^{(2)}_s(q,z,x)}
	\le C\bom^2(q,z)e^{C\bom(q,z)}
	\left(\frac{\sigma(x-z)}{\sigma(z)}+\frac{\sigma(z)}{\sigma(x-z)}\right)\ch(z,x).
\end{equation*}
Finally,
\begin{equation*}
\varUpsilon_c(q,z,x) = c_1(q,z,x) + \varUpsilon^{(2)}_c(q,z,x),
\end{equation*}
where
\begin{equation*}
c_1(q,z,x) = \int_0^x J_0(z,x,y)c_0(z,y)q(y) dy
\end{equation*}
\begin{equation*}
\abs{\varUpsilon^{(2)}_c(q,z,x)}
		\le C\om^2(q,z)e^{C\om(q,z)}
			\frac{\sigma(z)}{\sigma(x-z)}\ch(z,x).\qedhere
\end{equation*}
\end{remark}


\section[Fréchet differentiability]
		{\protect\boldmath Fréchet differentiability of $\psi(q,z,x)$ and its derivatives}
\label{sec:frechet}

To start with, let us recall the following definitions, particularized to the case in hand:
Let $\cB$ be a Hilbert space over $\K$, and let $\cU\subset\cB$ be open. A map $f:\cU\to\K$
is (Fréchet) differentiable at $q\in\cU$ if there exists a linear functional $d_qf:\cB\to\K$
such that
\[
\lim_{v\to 0}\frac{\abs{f(q+v) - f(q) - d_qf(v)}}{\norm{v}_\cB} = 0.
\]
The map $f$ is continuously differentiable on $\cU$ if it is differentiable at every point
in $\cU$ and the resulting map $df:\cU\to L(\cB,\K)$ is continuous. If $\cB$ is a complex
Hilbert space, then $f$ is analytic on an open subset $\cU$ of $\cB$ if it is
continuously differentiable there.
Now, let $\cB^\C$ be the complexification of a real Hilbert space $\cB$ and assume
$f:\cV\to\C$ differentiable at $q\in\cV$ (an open subset of $\cB^\C$). Then
the gradient of $f$ at $q$ is the (unique) element
$\partial f/\partial q\in\cB^\C$ such that
\[
d_qf(v) = \inner{\cc{\frac{\partial f}{\partial q}}}{v}_{\cB}
\]
for all $v\in\cB^\C$.
Finally, consider a real Hilbert space $\cB$ and let
$\cU\subset\cB$ be open. Then $f:\cU\to\R$ is real analytic on $\cU$ if for every $q\in\cU$
there exists $\cV_q\subset\cB^\C$ open and an analytic map $h_q:\cV_q\to\C$ such that
$f(v) = h_q(v)$ for all $v\in\cU\cap\cV_q$ (assumed non-empty).

\medskip

The Green function for the initial-value problem related to the equation
\[
-\varphi'' + \left[x + q(x) + v(x)\right]\varphi = z\varphi,\quad x\in\R_+,\quad z\in\C,
\]
is given by
\begin{align*}
J(q,z,x,y)
	&= s(q,z,x) c(q,z,y) - c(q,z,x) s(q,z,y)
	\\[1mm]
	&= \theta(q,z,x)\psi(q,z,y) - \psi(q,z,x)\theta(q,z,y),
\end{align*}
where the second identity follows a computation involving Lemma~\ref{lem:wronskian}.
Recalling Remark~\ref{rem:useful-bounds}, the second identity above implies
\begin{gather}
\abs{J(q,z,x,y)}
	\le \ub(q)^2
		\frac{g_A(x-z)g_B(y-z) + g_B(x-z)g_A(y-z)}
		{\sigma(x-z)\sigma(y-z)}
		\label{eq:bound-J},
		\\[1mm]
\abs{\partial_xJ(q,z,x,y)}
	\le \ub(q)^2 \frac{\sigma(x-z)}{\sigma(y-z)}
		\left[g_A(x-z)g_B(y-z) + g_B(x-z)g_A(y-z)\right]
		\label{eq:bound-J-prime}
\end{gather}
and
\begin{multline}
\label{eq:bound-J-dot}
\abs{\partial_z J(q,z,x,y)}
	\le \ub(q)^2 \left(\frac{\sigma(x-z)}{\sigma(y-z)}
			+ \frac{\sigma(y-z)}{\sigma(x-z)}\right)
	\\
		\times\left[g_A(x-z)g_B(y-z) + g_B(x-z)g_A(y-z)\right],
\end{multline}
for all $q\in\sA_r^\C$.

\begin{lemma}
$\psi(\cdot,z,x)$ and $\psi'(\cdot,z,x)$ are analytic maps
from $\sA_r^\C$ to $\C$. Moreover, their gradients are given by
\begin{align}
\frac{\partial\psi}{\partial q(y)}(q,z,x)
	&= - \cc{J(q,z,x,y)\psi(q,z,y)}\chi_{[x,\infty)}(y)(1+y)^{-r},
	\label{eq:gradient-psi-complex}
\intertext{and}
\frac{\partial\psi'}{\partial q(y)}(q,z,x)
	&= - \cc{\partial_x J(q,z,x,y)\psi(q,z,y)}\chi_{[x,\infty)}(y)(1+y)^{-r}.
	\nn
\end{align}
\end{lemma}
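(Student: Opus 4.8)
The plan is to compute the Gâteaux derivative of $q\mapsto\psi(q,z,x)$ straight from the Volterra equation \eqref{eq:volterra-psi}, upgrade it to a Fréchet derivative with a quadratic remainder (which gives analyticity via the "analytic $=$ continuously differentiable" criterion recalled above), and then recognise the derivative as an integral against the perturbed Green function $J$.

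\textbf{Setup and analyticity.} Fix $z\in\C$ and work in the Banach space $\mathcal{X}_z$ of measurable $f:\R_+\to\C$ with $\norm{f}_{\mathcal{X}_z}\defeq\sup_{x\ge 0}\abs{f(x)}\sigma(x-z)/g_A(x-z)<\infty$, and similarly $\mathcal{Y}_z$ with the weight $\bigl(\sigma(x-z)g_A(x-z)\bigr)^{-1}$. Introduce $(K_q f)(x)\defeq\int_x^\infty J_0(z,x,y)f(y)q(y)\,dy$. Using \eqref{eq:bound-J_0}, the monotonicity of $g_A$ (Lemma~\ref{lem:stupid-lemma}), Lemma~\ref{lem:about-omega} and the iterated‑integral identity from the proof of Lemma~\ref{lem:basic-estimates}, one checks that $q\mapsto K_q$ is a bounded \emph{linear} map $\sA_r^\C\to L(\mathcal{X}_z)$ with $\norm{K_q^n}\le(C\om(q,z))^n/n!$; hence $\sum_{n\ge0}(-K_q)^n$ converges and $I+K_q$ is invertible for \emph{every} $q\in\sA_r^\C$, while \eqref{eq:volterra-psi} reads $\psi(q,z,\cdot)=(I+K_q)^{-1}\psi_0(z,\cdot)$. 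Since $q\mapsto I+K_q$ is affine and takes values in the open set of invertible operators, on which inversion is analytic, $q\mapsto\psi(q,z,\cdot)\in\mathcal{X}_z$ is analytic; composing with the bounded evaluation $f\mapsto f(x)$ gives analyticity of $\psi(\cdot,z,x)$. Differentiating \eqref{eq:volterra-psi} in $x$ and using $J_0(z,x,x)=0$ (visible in \eqref{eq:green-function-0}) yields $\psi'(q,z,\cdot)=\psi_0'(z,\cdot)-\tilde K_q\psi(q,z,\cdot)$, with $(\tilde K_q f)(x)\defeq\int_x^\infty\partial_x J_0(z,x,y)f(y)q(y)\,dy$ defining a bounded linear $q\mapsto\tilde K_q\in L(\mathcal{X}_z,\mathcal{Y}_z)$ by \eqref{eq:partial-x-J_0}; analyticity of $\psi'(\cdot,z,x)$ follows as it is a product of analytic maps.

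\textbf{The derivative.} Subtracting \eqref{eq:volterra-psi} for $q+v$ from the one for $q$, the difference $\delta\defeq\psi(q+v,z,\cdot)-\psi(q,z,\cdot)$ obeys $(I+K_{q+v})\delta=-K_v\psi(q,z,\cdot)$, so $\delta=-(I+K_{q+v})^{-1}K_v\psi(q,z,\cdot)$. With $\norm{K_v}\le C\norm{v}_{\sA_r^\C}$ and the resolvent identity $(I+K_{q+v})^{-1}-(I+K_q)^{-1}=(I+K_{q+v})^{-1}K_v(I+K_q)^{-1}$ one gets
\[
\Bigl\|\,\delta+(I+K_q)^{-1}K_v\psi(q,z,\cdot)\,\Bigr\|_{\mathcal{X}_z}\le C\,\norm{v}_{\sA_r^\C}^2,
\]
so $d_q\psi(v)=-(I+K_q)^{-1}K_v\psi(q,z,\cdot)$ is the Fréchet derivative, depending continuously on $q$.

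\textbf{Identification with the Green function.} Set $u(x)\defeq-\int_x^\infty J(q,z,x,y)\psi(q,z,y)v(y)\,dy$. Using Fubini (licensed by the exponential bounds \eqref{eq:bound-J}, \eqref{eq:bound-J-prime}), the identity $(I+K_q)u=-K_v\psi(q,z,\cdot)$ is equivalent to
\[
J(q,z,x,y)=J_0(z,x,y)-\int_x^y J_0(z,x,w)\,q(w)\,J(q,z,w,y)\,dw ,
\]
which is nothing but the integral equation satisfied by $x\mapsto J(q,z,x,y)$ viewed as the solution of $(H_q-z)\varphi=0$ with Cauchy data $\varphi(y)=0$, $\varphi'(y)=1$ relative to the fundamental system $(\psi_0(z,\cdot),\theta_0(z,\cdot))$ of $H_0-z$; equivalently, the difference of the two sides solves $(H_0-z)\varphi=0$ with zero Cauchy data at $x=y$, hence vanishes. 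Therefore $d_q\psi(v)(x)=u(x)=-\int_x^\infty J(q,z,x,y)\psi(q,z,y)v(y)\,dy$, and writing this as $\inner{\cc{\partial\psi/\partial q}}{v}_{\sA_r^\C}$ (the extra conjugation being an artifact of the convention relating the gradient to the weighted inner product of $\sA_r^\C$) yields \eqref{eq:gradient-psi-complex}. For $\psi'$ one proceeds identically: $d_q\psi'(v)=-\tilde K_v\psi(q,z,\cdot)-\tilde K_q\,d_q\psi(v)$, and differentiating the displayed identity in $x$ (again $J_0(z,x,x)=0$ removes the boundary term) gives $d_q\psi'(v)(x)=-\int_x^\infty\partial_x J(q,z,x,y)\psi(q,z,y)v(y)\,dy$, i.e.\ the stated formula for $\partial\psi'/\partial q$.

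The main obstacle is the last step: producing the resolvent‑type identity for $J$ and controlling the Fubini interchanges against the competing weights $g_A,g_B$; equivalently, pinning down that $u$ is precisely the $L^2$‑at‑$+\infty$ solution with no component along $\psi(q,z,\cdot)$, which is what fixes the constant of integration. Everything else is routine once the space $\mathcal{X}_z$ is in place.
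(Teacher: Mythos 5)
Your proposal is correct, but it follows a genuinely different route from the paper. You linearize the \emph{unperturbed} Volterra equation \eqref{eq:volterra-psi} by recasting it as $(I+K_q)\psi(q,z,\cdot)=\psi_0(z,\cdot)$ in a weighted sup-norm space, get analyticity from the analyticity of operator inversion (with the factorial bounds $\norm{K_q^n}\le (C\om(q,z))^n/n!$ making $I+K_q$ invertible for every $q$), and then must \emph{identify} $-(I+K_q)^{-1}K_v\psi(q,z,\cdot)$ with $-\int_x^\infty J(q,z,x,y)\psi(q,z,y)v(y)\,dy$ via the resolvent-type identity for $J$, which you justify correctly (the difference of the two sides solves the unperturbed equation with vanishing Cauchy data at $x=y$, using $J_0(z,y,y)=0$ and $\partial_xJ_0(z,x,y)|_{x=y}=1$), modulo a Fubini interchange that the bounds \eqref{eq:bound-J_0}, \eqref{eq:bound-J} and Lemma~\ref{lem:stupid-lemma} do license. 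The paper short-circuits this: it starts from the Volterra equation \eqref{eq:psi-q-plus-v} for $\psi(q+v,z,x)$ with the \emph{perturbed} kernel $J(q,z,x,y)$ and potential $v$, iterates it exactly as in Lemma~\ref{lem:basic-estimates} using \eqref{eq:bound-J}, and reads off the linear term in $v$ directly in the desired form, with the tail of the series bounded by $C\om^2(v,z)$, hence $o(\norm{v}_{\sA_r})$; continuity of the differential then gives analyticity. So the paper needs no separate identification step, at the (implicit) cost of asserting that $\psi(q+v)$ solves the $J(q)$-kernel Volterra equation — essentially the same background fact your resolvent identity makes explicit. Your version buys a cleaner abstract mechanism (global invertibility and analytic inversion, quadratic remainder from the resolvent identity) and makes the hidden step visible; the paper's version is shorter and stays entirely within the series estimates already set up in Section~\ref{sec:estimates}. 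One cosmetic point: the conjugation in \eqref{eq:gradient-psi-complex} is purely the paper's gradient convention, as you note, so your unconjugated derivative formula does match the stated gradient.
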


\begin{proof}
Fix $q\in\sA_r^\C$. Since $\psi(q+v,z,x)$ is solution to the Volterra equation
\begin{equation}
\label{eq:psi-q-plus-v}
\psi(q+v,z,x) = \psi(q,z,x) - \int_x^{\infty}J(q,z,x,y)\psi(q+v,z,y)v(y)dy,
\end{equation}
we begin by considering the functions $\psi_n(q+v,z,x)$ ($n\in\N$) defined by the
recursive formula
\begin{equation}
\label{eq:recursive-formula-gradient-psi}
	\psi_n(q+v,z,x) \defeq - \int_x^\infty J(q,z,x,y)v(y)\psi_{n-1}(q+v,z,y)dy,
\end{equation}
along with the initial condition $\psi_0(q+v,z,x) \defeq \psi(q,z,x)$.
Because of \eqref{eq:bound-J}, \eqref{eq:recursive-formula-gradient-psi} implies
\begin{equation}
\label{eq:inductive-hyp}
\abs{\psi_n(q+v,z,x)}
	\le \frac{2^n}{n!} \ub(q)^{2n+1}
		\frac{g_A(x-z)}{\sigma(x-z)}\left(\int_x^{\infty}\frac{\abs{v(y)}}
		{\sigma(y-z)^2}dy\right)^n.
\end{equation}
Thus, we obtain $\psi(q+v,z,x)$ by uniform convergence on bounded subsets of
$\sA_r^\C\times\C\times\R_+$ of the series
\begin{equation*}
\sum_{n=0}^\infty \psi_n(q+v,z,x).
\end{equation*}
Also, \eqref{eq:psi-q-plus-v} yields
\begin{equation*}
\psi(q+v,z,x) = \psi(q,z,x) - \int_x^{\infty}J(q,z,x,y)\psi(q,z,y)v(y)dy
	+ \varXi^{(2)}(q+v,z,x),
\end{equation*}
where
\begin{equation*}
\varXi^{(2)}(q+v,z,x) \defeq \sum_{n=2}^\infty \psi_n(q+v,z,x).
\end{equation*}
As a consequence of \eqref{eq:inductive-hyp},
\begin{equation*}
\abs{\varXi^{(2)}(q+v,z,x)}
	\le 4 \ub(q)^5 \om^2(v,z) e^{2\ub(q)^2\om(v,z)}
	\frac{g_A(x-z)}{\sigma(x-z)}.
\end{equation*}
Therefore, in view of Lemma~\ref{lem:about-omega},
\begin{equation*}
\frac{\abs{\varXi^{(2)}(q+v,z,x)}}{\norm{v}_{\sA_r}} \to 0, \quad v \to 0,
\end{equation*}
uniformly on bounded subsets of $\sA_r^\C$. But this implies that
$\psi(\cdot,z,x):\sA_r^\C\to\C$ is continuously differentiable, and clearly
\[
\inner{\cc{\frac{\partial\psi}{\partial q}(q,z,x)}}{v}_{\sA_r}
	= - \int_x^{\infty}J(q,z,x,y)\psi(q,z,y)v(y)dy,
\]
thus proving \eqref{eq:gradient-psi-complex}.
The proof concerning $\psi'(\cdot,z,x)$ is omitted since it goes along a similar
argument, in this case based on \eqref{eq:bound-J-prime}.
\end{proof}

\begin{lemma}
$\dot{\psi}(\cdot,z,x)$ is an analytic map from $\sA_r^\C$ to $\C$, whose gradient is
\[
\frac{\partial\dot{\psi}}{\partial q(y)}(q,z,x)
	= - \cc{\left(\partial_z J(q,z,x,y)\psi(q,z,y)
		+ J(q,z,x,y)\dot{\psi}(q,z,y)\right)}\chi_{[x,\infty)}(y)(1+y)^{-r}.
\]
\end{lemma}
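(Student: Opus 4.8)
The plan is to mimic the proof of the previous lemma, now starting from the Volterra-type equation \eqref{eq:dot-volterra} satisfied by $\dot{\psi}(q+v,z,x)$ with potential $q+v$, namely
\begin{multline*}
\dot{\psi}(q+v,z,x) = \dot{\psi}(q,z,x)
	- \int_x^\infty \partial_z J(q,z,x,y)\psi(q+v,z,y)v(y)dy
	\\
	- \int_x^\infty J(q,z,x,y)\dot{\psi}(q+v,z,y)v(y)dy,
\end{multline*}
which follows by differentiating in $z$ the analogue of \eqref{eq:psi-q-plus-v}. First I would set up a double recursion: define $\rho_n(q+v,z,x)$ with $\rho_0 \defeq \dot{\psi}(q,z,x)$ and
\[
\rho_n \defeq - \int_x^\infty \partial_z J(q,z,x,y)\psi_{n-1}(q+v,z,y)v(y)dy
	- \int_x^\infty J(q,z,x,y)\rho_{n-1}(q+v,z,y)v(y)dy,
\]
where $\psi_n(q+v,z,x)$ is as in \eqref{eq:recursive-formula-gradient-psi}. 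Using the bounds \eqref{eq:bound-J}, \eqref{eq:bound-J-dot} of Remark~\ref{rem:useful-bounds}, together with the estimate \eqref{eq:inductive-hyp} for $\psi_n(q+v,z,x)$ and the bounds on $\dot{\psi}(q,z,x)$ from Remark~\ref{rem:useful-bounds}, one verifies inductively that $\abs{\rho_n(q+v,z,x)}$ is bounded by a term of the form $\frac{c^n}{n!}\ub(q)^{2n+1}\sigma(x-z)g_A(x-z)\,\om(v,z)^n$ plus a lower-order term (coming from iterating the $\partial_z J$ integral once against $\psi_{n-1}$, which carries an extra factor $\norm{v}_1$ and a $\frac{1}{(n-1)!}$), exactly as in the derivation of \eqref{eq:bound-psi-dot-crude}. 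This gives uniform convergence of $\sum_n \rho_n$ on bounded subsets of $\sA_r^\C\times\C\times\R_+$ and identifies its sum with $\dot{\psi}(q+v,z,x)$.

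Next I would isolate the zeroth- and first-order terms. Writing $\rho_0 + \rho_1$ explicitly and substituting $\psi_0(q+v,z,y) = \psi(q,z,y)$ gives
\[
\dot{\psi}(q+v,z,x) = \dot{\psi}(q,z,x)
	- \int_x^\infty \left(\partial_z J(q,z,x,y)\psi(q,z,y) + J(q,z,x,y)\dot{\psi}(q,z,y)\right)v(y)dy
	+ R(q+v,z,x),
\]
where $R \defeq \sum_{n\ge 2}\rho_n$ collects the remainder. From the bound on $\abs{\rho_n}$ above one gets an estimate of the shape $\abs{R(q+v,z,x)} \le C\,\ub(q)^5\bigl(\om^2(v,z) + \norm{v}_1\,\om(v,z)\bigr)e^{C\ub(q)^2\om(v,z)}\sigma(x-z)g_A(x-z)$. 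Since $\norm{v}_1 \le (r-1)^{-1/2}\norm{v}_{\sA_r}$ and, by Lemma~\ref{lem:about-omega}, $\om(v,z) \le C\norm{v}_{\sA_r}$ with $z$-decay, both $\om^2(v,z)/\norm{v}_{\sA_r}$ and $\norm{v}_1\om(v,z)/\norm{v}_{\sA_r}$ tend to $0$ as $v\to 0$, uniformly on bounded subsets of $\sA_r^\C$; hence $R(q+v,z,x)/\norm{v}_{\sA_r}\to 0$. This shows $\dot{\psi}(\cdot,z,x)$ is differentiable with the linear functional $v\mapsto -\int_x^\infty(\partial_z J\,\psi + J\,\dot{\psi})v$, and continuity of the derivative in $q$ follows from the continuous dependence of $J$, $\psi$ and $\dot{\psi}$ on $q$ (again via Remark~\ref{rem:useful-bounds} and the dominated convergence argument already used). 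Identifying this functional with an inner product against $\overline{(\partial_z J(q,z,x,y)\psi(q,z,y) + J(q,z,x,y)\dot{\psi}(q,z,y))}\,\chi_{[x,\infty)}(y)(1+y)^{-r}$ yields the claimed gradient formula.

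The main obstacle I anticipate is bookkeeping in the double recursion: unlike the single Neumann series for $\psi$, here each iterate $\rho_n$ receives a contribution built from $\psi_{n-1}$ through the $\partial_z J$ kernel, and the bound \eqref{eq:bound-J-dot} carries the asymmetric factor $\sigma(x-z)/\sigma(y-z) + \sigma(y-z)/\sigma(x-z)$; controlling the second summand forces one to replace $\int \abs{v}/\sigma^2$ by $\int\abs{v}$ in that branch, which is why a term with $\norm{v}_1$ (rather than $\om(v,z)^2$) appears in the remainder. One must check that this $\norm{v}_1$-term is still $o(\norm{v}_{\sA_r})$ — which it is, since it is quadratic in $v$ overall (one factor $\norm{v}_1$ and at least one factor $\om(v,z)$ from the surviving iterations). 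Apart from this, the argument is a routine transcription of the preceding lemma's proof, using the Green-function bounds collected in \eqref{eq:bound-J}–\eqref{eq:bound-J-dot} and Lemma~\ref{lem:about-omega}.
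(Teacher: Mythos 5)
Your proposal is correct and follows essentially the same route as the paper: the paper also differentiates \eqref{eq:psi-q-plus-v} in $z$, sets up exactly your double recursion (its $\zeta_n$, with $\zeta_0=\dot{\psi}(q,z,x)$ and the $\partial_z J$ term fed by $\psi_{n-1}(q+v)$), bounds the iterates via \eqref{eq:bound-J} and \eqref{eq:bound-J-dot}, and shows the tail $\sum_{n\ge 2}\zeta_n$ is $O(\norm{v}_{\sA_r}^2)$, hence $o(\norm{v}_{\sA_r})$, before reading off the gradient. Your slightly different bookkeeping of the remainder (via $\om^2(v,z)+\norm{v}_1\,\om(v,z)$ instead of the paper's combined $\norm{v}_{\sA_r}^n$ bound) changes nothing essential.
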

\begin{proof}
Again, fix $q\in\sA_r^\C$. It follows from
\eqref{eq:psi-q-plus-v} that $\dot{\psi}(q+v,z,x)$ satisfies the integral
equation
\begin{multline*}
\dot{\psi}(q+v,z,x) = \dot{\psi}(q,z,x)
	- \int_x^\infty \partial_z J(q,z,x,y) \psi(q+v,z,y) v(y) dy
\\[1mm]
	- \int_x^\infty J(q,z,x,y) \dot{\psi}(q+v,z,y) v(y) dy.
\end{multline*}
Let $\zeta_n(q+v,z,x)$ ($n\in\N$) be given by the recursive formula
\begin{multline*}
\zeta_n(q+v,z,x) \defeq
	-\int_x^\infty \partial_z J(q,z,x,y)\psi_{n-1}(q+v,z,y)v(y)dy
\\[1mm]
	-\int_x^\infty J(q,z,x,y)\zeta_{n-1}(q+v,z,y)v(y)dy,
\end{multline*}
along with $\zeta_0(q+v,z,x)\defeq\dot{\psi}(q,z,x)$,
where $\psi_n(q+v,z,y)$ has been defined in \eqref{eq:recursive-formula-gradient-psi}.
Then, an induction argument involving \eqref{eq:bound-J} and \eqref{eq:bound-J-dot}
shows that
\begin{multline*}
\abs{\zeta_n(q+v,z,x)}
	\le \frac{2^n}{n!}\tau(q)^{2n+1}\sigma(x-z)g_A(x-z)
		\left(\int_x^{\infty}\frac{\abs{v(y)}}{\sigma(y-z)^2}dy\right)^n
	\\[1mm]
	+ \frac{2^{n+1}}{(n-1)!}\tau(q)^{2n+1}\frac{g_A(x-z)}{\sigma(x-z)}
				\left(\int_x^{\infty}\abs{v(y)}dy\right)^n,
\end{multline*}
hence
\begin{align}
\abs{\zeta_n(q+v,z,x)}
	&\le \frac{2^n}{n!}\tau(q)^{2n+1}\left[\sigma(x-z)\om(v,z)^n
		+ \frac{2 n}{\sigma(x-z)}\norm{v}_{\sA_r}^n\right]g_A(x-z)\nn
	\\[1mm]
	&\le C\frac{2^n}{(n-1)!}\tau(q)^{2n+1}\norm{v}_{\sA_r}^n
		\sigma(x-z)g_A(x-z).\label{eq:error-dot}
\end{align}
Therefore,
\begin{multline*}
\dot{\psi}(q+v,z,x)
	= \dot{\psi}(q,z,x)
		- \int_x^\infty \left(\partial_z J(q,z,x,y)\psi(q,z,y)
		+ J(q,z,x,y) \dot{\psi}(q,z,y)\right)v(y) dy
	\\[1mm]
	+ \dot{\varXi}^{(2)}(q+v,z,x),
\end{multline*}
where
\[
\dot{\varXi}^{(2)}(q+v,z,x) = \sum_{n=2}^\infty\zeta_n(q+v,z,x).
\]
As a result of \eqref{eq:error-dot},
\begin{equation*}
\frac{\abs{\dot{\varXi}^{(2)}(q+v,z,x)}}{\norm{v}_{\sA_r}} \to 0, \quad v \to 0,
\end{equation*}
uniformly on bounded subsets of $\sA_r^\C$. Thus, the assertion is proven.
\end{proof}

\begin{remark}
\label{rem:everything-has-gradient}
In particular, under the assumption $(q,\lambda)\in\sA_r\times\R$,
\begin{align}
\frac{\partial\psi}{\partial q(y)}(q,\lambda,0)
	&=  s(q,\lambda,y)\psi(q,\lambda,y)(1+y)^{-r},\label{eq:gradient-psi}
\\[1mm]
\frac{\partial\psi'}{\partial q(y)}(q,\lambda,0)
	&= - c(q,\lambda,y)\psi(q,\lambda,y)(1+y)^{-r}\nn
\intertext{and}
\frac{\partial\dot{\psi}}{\partial q(y)}(q,\lambda,0)
	&= \left[\dot{s}(q,\lambda,y)\psi(q,\lambda,y)
		+ s(q,\lambda,y)\dot{\psi}(q,\lambda,y)\right](1+y)^{-r};\nn
\end{align}
this will be used later in Section~\ref{sec:norming-constants}.
\end{remark}


\section{The eigenvalues}
\label{sec:eigenvalues}

We start this section with a initial, relatively crude localization of the eigenvalues.
A slightly stronger result is shown in \cite{lk}, albeit for a more restrictive class
of perturbations.

\begin{lemma}
\label{lem:crude-asymp-eigenvalues}
Suppose $q\in\sA_r$. Given $\epsilon>0$ arbitrarily small, the eigenvalues of
$H_q$ satisfy
\begin{equation*}
\lambda_n(q) = - a_n + O\bigl(n^{-2/3+\epsilon}\bigr)
\end{equation*}
uniformly on bounded subsets of $\sA_r$.
\end{lemma}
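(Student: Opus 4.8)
The plan is to locate the eigenvalues of $H_q$ as the zeros of the entire function $z\mapsto\psi(q,z,0)$, comparing it with the unperturbed function $z\mapsto\psi_0(z,0)=\sqrt{\pi}\,\ai(-z)$ whose zeros are exactly $-a_n$. From Lemma~\ref{lem:basic-estimates}\ref{eq:psi} we have $\psi(q,z,0)=\psi_0(z,0)+\varXi(q,z,0)$ with $\abs{\varXi(q,z,0)}\le C\om(q,z)e^{C\om(q,z)}g_A(-z)/\sigma(z)$. On the negative real axis $g_A(-z)=1$ for $z=\lambda\le 0$, $\sigma(z)=1+\abs{z}^{1/4}$, and by Lemma~\ref{lem:about-omega} one has $\om(q,\lambda)=O(\abs{\lambda}^{-1/2}\log^{1/2}\abs{\lambda})$, which is $o(1)$; hence on large arcs and intervals the perturbation is small relative to $\psi_0$. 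The first step is therefore to set up a Rouché-type argument on a suitable contour.

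First I would fix a circle $\Gamma_n$ centered at $-a_n$ (on the real axis) of radius comparable to the gap between consecutive unperturbed eigenvalues; from \eqref{eq:zeros-airy} the spacing $-a_{n+1}+a_n$ behaves like $\pi(\tfrac32\pi n)^{-1/3}\sim c\, n^{-1/3}$, so a radius $\delta_n=\tfrac14(-a_{n+1}+a_n)\asymp n^{-1/3}$ keeps $\Gamma_n$ away from all other zeros of $\ai(-z)$. On such a circle one needs a lower bound $\abs{\psi_0(z,0)}\ge c\,\abs{z}^{-1/4}\cdot(\text{something})$; this requires the standard asymptotics/oscillation estimates for the Airy function near its zeros (cf.\ the Appendix the paper refers to), giving $\abs{\ai(-z)}\gtrsim n^{-1/12}$ on $\Gamma_n$, equivalently $\abs{\psi_0(z,0)}\gtrsim (-a_n)^{-1/4}$ there. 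Since $\abs{\varXi(q,z,0)}\le C\,\om(q,z)/\sigma(z)\lesssim n^{-1/6}\cdot n^{-1/2}\log^{1/2}n$ on $\Gamma_n$, which is much smaller than $(-a_n)^{-1/4}\asymp n^{-1/6}$, Rouché's theorem applies: $\psi(q,z,0)$ has exactly one zero inside $\Gamma_n$ (and, via a complementary large-semicircle/strip argument, no others), uniformly for $q$ in a bounded subset of $\sA_r$. This already gives $\lambda_n(q)=-a_n+O(\delta_n)=-a_n+O(n^{-1/3})$.

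To upgrade $O(n^{-1/3})$ to $O(n^{-2/3+\epsilon})$ I would run a one-step Newton/implicit-function estimate. Writing $\mu_n\defeq\lambda_n(q)$, we have $0=\psi(q,\mu_n,0)=\psi_0(\mu_n,0)+\varXi(q,\mu_n,0)$, and expanding $\psi_0(\mu_n,0)=\psi_0(-a_n,0)+\dot\psi_0(\xi_n,0)(\mu_n+a_n)=\dot\psi_0(\xi_n,0)(\mu_n+a_n)$ for some $\xi_n$ between $-a_n$ and $\mu_n$, so that
\[
\abs{\mu_n+a_n}\le \frac{\abs{\varXi(q,\mu_n,0)}}{\abs{\dot\psi_0(\xi_n,0)}}.
\]
Here $\dot\psi_0(z,0)=-\psi_0'(z,0)=-\sqrt{\pi}\,\ai'(-z)$, and since $\ai$ has simple zeros, $\abs{\ai'(-a_n)}$ is bounded below — precisely $\abs{\ai'(-a_n)}\asymp (-a_n)^{1/4}\asymp n^{1/6}$ by the Airy asymptotics — and this lower bound persists on $\Gamma_n$ for $\xi_n$ that close to $-a_n$. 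Combined with $\abs{\varXi(q,\mu_n,0)}\le C\om(q,\mu_n)/\sigma(\mu_n)\le C' n^{-1/6}\cdot n^{-1/2}\log^{1/2}n$ (for $r\in(1,2)$; better for $r\ge2$), we get $\abs{\mu_n+a_n}\lesssim n^{-1/6}\cdot n^{-1/2}\log^{1/2}n / n^{1/6}=n^{-5/6}\log^{1/2}n$, which is well within $O(n^{-2/3+\epsilon})$; the weaker claimed bound gives slack to absorb the behaviour of $\dot\psi_0$ and $\om$ as $\xi_n$ ranges over $\Gamma_n$ without needing razor-sharp constants. Uniformity over bounded subsets of $\sA_r$ is automatic because every estimate above depends on $q$ only through $\norm{q}_{\sA_r}$ (via $\om(q,\cdot)$ and Lemma~\ref{lem:about-omega}).

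The main obstacle is the quantitative behaviour of $\psi_0$ near its zeros: establishing the matching two-sided bounds $\abs{\psi_0(z,0)}\gtrsim(-a_n)^{-1/4}$ on $\Gamma_n$ and $\abs{\dot\psi_0(z,0)}\gtrsim(-a_n)^{1/4}$ there, uniformly in $n$. These hinge on the oscillatory asymptotics of $\ai(w)$ and $\ai'(w)$ for $w\to-\infty$ (including the location and spacing of zeros, \eqref{eq:zeros-airy}) and on the fact that $\ai$ and $\ai'$ have no common zeros; I expect these to be collected in the Appendix, so the body of the proof reduces to assembling Rouché plus the Newton estimate. A secondary technical point is handling the part of the contour off the real axis and ruling out spurious zeros far from the $-a_n$; there one uses that $\om(q,z)\to 0$ as $\abs z\to\infty$ in the relevant region together with the exponential factor $g_A(-z)$ in \eqref{eq:bound-psi}, which decays once $\re z$ is bounded above, forcing $\psi(q,z,0)$ to inherit the zero-counting of $\psi_0$.
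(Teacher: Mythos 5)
Your plan is sound and ends up resting on the same two inputs as the paper's proof --- the perturbation bound \eqref{eq:bound-psi} combined with Lemma~\ref{lem:about-omega}, and lower bounds for $\abs{\ai}$, $\abs{\ai'}$ near $-a_n$ (supplied in the paper by Lemma~\ref{lem:bound-for-gA}) --- but the mechanism for the quantitative step is genuinely different. The paper does not pass through an intermediate $O(n^{-1/3})$ localization: it chooses $\delta_n=4\bigl(\tfrac32\pi n\bigr)^{-2/3+\epsilon}$ from the outset and gets a zero of $\psi(q,\cdot,0)$ in $(-a_n-\delta_n,-a_n+\delta_n)$ by a sign-change (intermediate value) comparison at the two real endpoints, using the mean value theorem only to bound $\abs{\psi_0(-a_n\pm\delta_n)}$ from below; Rouché on the contours $\cE^K$, $\cE_k$ is then invoked solely for uniqueness and the global zero count. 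You instead obtain existence and uniqueness at once from Rouché on quarter-gap circles around $-a_n$ and then refine by a Newton/mean-value step at the eigenvalue itself; this buys a sharper conclusion --- after correcting two arithmetic slips: since $\abs{\mu_n}\asymp n^{2/3}$ one has $\om(q,\mu_n)=O(n^{-1/3}\log^{1/2}n)$, not $O(n^{-1/2}\log^{1/2}n)$, and $(-a_n)^{-1/4}\asymp n^{-1/6}$, not $n^{-1/12}$ --- namely $\abs{\lambda_n(q)+a_n}=O(n^{-2/3}\log^{1/2}n)$, i.e.\ essentially $O(n^{-1/3}\omega_r(n))$, which is consistent with \eqref{eq:note} and Theorem~\ref{thm:eigenvalues}; your claimed $n^{-5/6}\log^{1/2}n$ is too strong in general, as the leading correction in Theorem~\ref{thm:eigenvalues} is generically of that larger size. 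The price of your route is that the Airy lower bounds must be established on full complex circles (not just at two real points) and the exclusion of spurious zeros still requires the large-contour argument that the paper carries out with $\cE^m$ and Lemma~\ref{lem:bound-for-gA}; you correctly identify both items but leave them to the Appendix-level machinery, which is where the paper indeed puts them.
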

\begin{proof}

Let us abbreviate
\begin{equation*}
\psi_0(z) = \psi_0(z,0),\quad  \psi(z) = \psi(q,z,0),\quad \om(z)=\om(q,z).
\end{equation*}
For $\lambda\in\R$, Lemma~\ref{lem:basic-estimates}\ref{eq:psi} implies
\begin{equation*}
\abs{\psi(\lambda)-\psi_0(\lambda)}
	\le e^{C\om(\lambda)}\frac{C\om(\lambda)}{1+\abs{\lambda}^{1/4}}.
\end{equation*}
Given $\epsilon\in(0,1/6)$, set $\delta_n = 4\bigl(\frac32\pi n\bigr)^{-2/3+\epsilon}$.
Let $\cU$ be a bounded subset of $\sA_r$.
In view of Lemma~\ref{lem:about-omega}, there exists $n_1\in\N$ such that
\begin{equation*}
e^{C\om(-a_n \pm \delta_n)} \le 2
\end{equation*}
for all $n\ge n_1$ and $q\in\cU$. By the same token, there exists $n_2\in\N$ such that
\begin{equation*}
2C\om(-a_n \pm \delta_n)\left(2+\abs{-a_n \pm \delta_n}\right)^{(1-3\epsilon)/2} \le 1
\end{equation*}
for all $n\ge n_2$ and $q\in\cU$ (this inequality does not hold if $\epsilon=0$).
Therefore, there exists a sufficiently large $n_3$ such that
\begin{equation*}
\abs{\psi(-a_n\pm\delta_n)-\psi_0(-a_n\pm\delta_n)}
	\le \frac{1}{\abs{-a_n\pm\delta_n}^{3/4-3\epsilon/2}}
	< \frac{2}{\bigl(\frac32\pi n\bigr)^{1/2-\epsilon}}
\end{equation*}
for all $n\ge n_3$.
Now, consider any sequence $\{c_n\}_{n=1}^\infty\subset\R$ such that $\abs{c_n}\le\delta_n$.
Then, recalling \eqref{eq:ai-prime-asymp-negative},
\begin{align*}
\dot{\psi}_0(-a_n + c_n)
	&= - (-a_n + c_n)^{1/4}\left[\sin\left(\tfrac23(-a_n + c_n)^{3/2}-\tfrac14\pi\right)
		+ O(n^{-1})\right]
	\\[1mm]
	&= (-1)^{n+1}\bigl(\tfrac32\pi n\bigr)^{1/6}
		\left[1+O\bigl(n^{-7/12+2\epsilon}\bigr)\right],
\end{align*}
which in turn implies the existence of a $n_4$ such that
\begin{equation*}
\abss{\dot{\psi}_0(-a_n + c_n)} \ge \tfrac12\bigl(\tfrac32\pi n\bigr)^{1/6},
\end{equation*}
for all $n\ge n_4$. As a consequence,
\begin{equation*}
\abs{\psi_0(-a_n\pm\delta_n)}
	= \abss{\dot{\psi}_0(-a_n + c^{\pm}_n)}\abss{\delta_n}
	\ge \frac{2}{\bigl(\frac32\pi n\bigr)^{1/2-\epsilon}},
\end{equation*}
where $c^{+}_n\in (0,\delta_n)$ and $c^{-}_n\in (-\delta_n,0)$.
Therefore,
\begin{equation*}
\abs{\psi(-a_n\pm\delta_n)-\psi_0(-a_n\pm\delta_n)}
	\le \abs{\psi_0(-a_n\pm\delta_n)}
\end{equation*}
for all $n\ge n_5=\max\{n_1,n_2,n_3,n_4\}$ and $q\in\cU$, and this in turn implies
that $\psi(\lambda)$ has a zero in each interval $(-a_n-\delta_n, -a_n+\delta_n)$
for every $n\ge n_5$.

It remains to show that there are no other zeros, indeed, only
one nearby every $-a_n$ for large $n$. For this, let us consider the contours
\begin{equation*}
\cE^m := \left\{z\in\C: \abs{\zeta} = \bigl(m+\tfrac14\bigr)\pi\right\},
\quad
\cE_k := \left\{z\in\C: \abs{\zeta - \bigl(k-\tfrac14\bigr)\pi}=\tfrac{\pi}{2}\right\},
\quad
m,k\in\N.
\end{equation*}
Let $K=\max\{n_5,k_0\}$, where $k_0$ is the integer mentioned
in Lemma~\ref{lem:bound-for-gA}. Then, the said lemma implies
\begin{equation}
\label{eq:pre-rouche}
\abs{\psi(z)-\psi_0(z)} < 8C\omega(z)\abs{\psi_0(z)},
\end{equation}
for all $z\in\cE_k$, $k>K$. Now, increase $K$ so $\cE^{K}$ encloses the (finitely many)
negative zeros of $\psi(z)$ and \eqref{eq:pre-rouche}
also holds in $\cE^{K}$. Increase $K$ one more time (if necessary)
to ensure that $\omega(z)\le (8C)^{-1}$ whenever
$\abs{z}\ge(\frac32 \pi (K+\frac14))^{2/3}$. Then,
\begin{equation*}
\abs{\psi(z)-\psi_0(z)} < \abs{\psi_0(z)},
\quad z\in\cE^K,
\quad z\in\cE_k,\quad k>K.
\end{equation*}
Finally, apply Rouché's theorem.
\end{proof}

Let us define
\[
\eta_k(q,x) = \frac{\psi(q,\lambda_k(q),x)}{\norm{\psi(q,\lambda_k(q),\cdot)}_2}.
\]

\begin{proposition}
\label{lem:eingenvalue-is-real-analytic}
Given $n\in\N$, $\lambda_n:\sA_r\to\R$ is a real analytic map whose gradient is
\begin{equation*}
\frac{\partial\lambda_n}{\partial q(x)} = \eta^2_n(q,x) (1+x)^{-r}.
\end{equation*}
\end{proposition}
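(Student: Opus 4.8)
The plan is to apply the implicit function theorem to the map $F(q,z) \defeq \psi(q,z,0)$, viewed as a function on $\sA_r^\C \times \C$. From Lemma~\ref{lem:basic-estimates}\ref{eq:psi} we know $\psi(\cdot,z,0):\sA_r^\C\to\C$ is analytic (continuously Fréchet differentiable) and $\psi(q,\cdot,0):\C\to\C$ is entire; jointly $F$ is analytic on $\sA_r^\C\times\C$. The eigenvalue $\lambda_n(q)$ is characterized by $F(q,\lambda_n(q)) = 0$, and since the spectrum of $H_q$ is simple, $\lambda_n(q)$ is a simple zero of $z\mapsto \psi(q,z,0)$, which gives $\partial_z F(q,\lambda_n(q)) = \dot{\psi}(q,\lambda_n(q),0)\neq 0$. (This nonvanishing is the standard fact that $\psi$ and $\dot\psi$ cannot both vanish at a boundary point, as recorded in the footnote identity $\psi\dot\psi' - \psi'\dot\psi = -\psi^2$ in the introduction.) The complex-analytic implicit function theorem then yields, for each fixed $q_0\in\sA_r$, a neighborhood $\cV_{q_0}\subset\sA_r^\C$ of $q_0$ and a unique analytic map $h_{q_0}:\cV_{q_0}\to\C$ with $h_{q_0}(q_0) = \lambda_n(q_0)$ and $F(q,h_{q_0}(q)) = 0$ on $\cV_{q_0}$. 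For real $q$ near $q_0$, self-adjointness of $H_q$ forces the relevant zero to be real and (by the crude localization of Lemma~\ref{lem:crude-asymp-eigenvalues} plus continuity) to coincide with $\lambda_n(q)$; hence $h_{q_0}$ restricts to $\lambda_n$ on $\cU\cap\cV_{q_0}$, establishing real analyticity in the sense defined at the start of Section~\ref{sec:frechet}.

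For the gradient, I would differentiate the identity $\psi(q,\lambda_n(q),0) = 0$ with respect to $q$ in the direction $v\in\sA_r$. By the chain rule,
\[
d_q\bigl(\psi(\cdot,\lambda_n(\cdot),0)\bigr)(v)
	= \Bigl\langle\cc{\tfrac{\partial\psi}{\partial q}(q,\lambda_n(q),0)},v\Bigr\rangle_{\sA_r}
		+ \dot\psi(q,\lambda_n(q),0)\, d_q\lambda_n(v) = 0,
\]
so that, solving for $d_q\lambda_n(v)$ and using \eqref{eq:gradient-psi} from Remark~\ref{rem:everything-has-gradient}, namely $\tfrac{\partial\psi}{\partial q(x)}(q,\lambda_n(q),0) = s(q,\lambda_n(q),x)\psi(q,\lambda_n(q),x)(1+x)^{-r}$,
\[
\frac{\partial\lambda_n}{\partial q(x)}
	= -\frac{s(q,\lambda_n(q),x)\psi(q,\lambda_n(q),x)}{\dot\psi(q,\lambda_n(q),0)}(1+x)^{-r}.
\]
It remains to identify this with $\eta_n^2(q,x)(1+x)^{-r}$. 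This is where the Wronskian bookkeeping enters: at $z=\lambda_n(q)$ one has $\psi(q,\lambda_n(q),0)=0$, so $\psi(q,\lambda_n(q),\cdot)$ is a multiple of $s(q,\lambda_n(q),\cdot)$; comparing derivatives at $x=0$ gives $\psi(q,\lambda_n(q),x) = \psi'(q,\lambda_n(q),0)\,s(q,\lambda_n(q),x)$. Substituting and using the residue/norming identity from the introduction's footnote, $\norm{\psi(q,\lambda_n(q),\cdot)}_2^2 = -\psi'(q,\lambda_n(q),0)/\dot\psi(q,\lambda_n(q),0)\cdot\psi'(q,\lambda_n(q),0) = -\psi'\dot\psi^{-1}|\psi'|\cdots$ — more cleanly, the identity $\psi\dot\psi'-\psi'\dot\psi=-\psi^2$ integrated over $\R_+$ gives $\norm{\psi(q,\lambda_n(q),\cdot)}_2^2 = -\psi'(q,\lambda_n(q),0)\dot\psi(q,\lambda_n(q),0)$ (boundary terms at $0$ vanish since $\psi(q,\lambda_n(q),0)=0$, and at $\infty$ by the decay in Lemma~\ref{lem:basic-estimates}). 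Then
\[
-\frac{s\,\psi}{\dot\psi}
	= -\frac{\psi^2}{\psi'(q,\lambda_n(q),0)\,\dot\psi(q,\lambda_n(q),0)}
	= \frac{\psi^2(q,\lambda_n(q),x)}{\norm{\psi(q,\lambda_n(q),\cdot)}_2^2}
	= \eta_n^2(q,x),
\]
as claimed.

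The main obstacle is not the implicit function theorem itself — that is routine given the analyticity already established — but rather the two matching arguments: (i) verifying that the locally defined analytic branch $h_{q_0}$ really does track the $n$-th eigenvalue $\lambda_n$ and not some other zero when one passes from complex $q$ back to real $q$ (handled by combining simplicity of the spectrum, continuity of eigenvalues in $q$, and the uniform localization of Lemma~\ref{lem:crude-asymp-eigenvalues}, which keeps the zeros separated so the correct branch is unambiguous in a neighborhood), and (ii) the algebraic reduction of $-s\psi/\dot\psi$ to $\psi^2/\norm{\psi}_2^2$, which hinges on correctly using the Wronskian normalization $W(\psi,\theta)\equiv 1$ (Lemma~\ref{lem:wronskian}), the proportionality $\psi = \psi'(q,\lambda_n,0)\,s$ at an eigenvalue, and the Green-type identity $\psi\dot\psi'-\psi'\dot\psi=-\psi^2$ together with the $L^2$ decay estimates of Lemma~\ref{lem:basic-estimates} to kill the boundary term at infinity. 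Everything else is bookkeeping with the gradients already computed in Section~\ref{sec:frechet}.
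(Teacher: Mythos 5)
Your proposal is correct and follows essentially the same route as the paper: the implicit function theorem applied to $\psi(q,z,0)=0$, with $\dot\psi(q,\lambda_n(q),0)\neq 0$ guaranteed by the identity $\norm{\psi(q,\lambda_n(q),\cdot)}_2^2=-\psi'(q,\lambda_n(q),0)\dot\psi(q,\lambda_n(q),0)$, followed by the chain rule, the gradient formula \eqref{eq:gradient-psi}, and the proportionality $\psi(q,\lambda_n(q),\cdot)=\psi'(q,\lambda_n(q),0)\,s(q,\lambda_n(q),\cdot)$. The only cosmetic difference is that you pass through the complexified implicit function theorem and match the branch back to the real eigenvalue, whereas the paper invokes the (real-analytic) implicit function theorem of Pöschel--Trubowitz directly and identifies the branch via uniqueness of the continuous solution map.
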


\begin{proof}
Fix $q \in \sA_r$. Since
\begin{equation}
\label{eq:the-trick}
\norm{\psi(q,\lambda_n(q),\cdot)}^2_2
	= -\psi'(q,\lambda_n(q),0)\dot{\psi}(q,\lambda_n(q),0) ,
\end{equation}
it follows that $\dot{\psi}(q,\lambda_n(q),0) \ne 0$.
Due to the Implicit Function Theorem (see, for instance, \cite[Appendix~B]{poeschel}),
there exists an open neighborhood $\cU\subset\sA_r$ of $q$ and a unique continuous map
$\mu_n:\cU\to\R$ such that
\begin{equation*}
	\psi(v,\mu_n(v),0)=0,
\quad v \in\cU, \quad  \mu_n(q)=\lambda_n(q).
\end{equation*}
Moreover, $\mu_n$ is real analytic. Since it is unique, necessarily
$\mu_n(q)=\lambda_n(q)$ for all $q \in \cU$. Therefore, $\lambda_n$ is real analytic.

It remains to compute the gradient of $\lambda_n$. Since
\begin{equation*}
\psi(q,\lambda_n(q),0) = 0,
\end{equation*}
we have
\begin{equation*}
\left.\frac{\partial\psi}{\partial q(y)}(q,\lambda,0)\right|_{\lambda=\lambda_n(q)}
	+ \dot{\psi}(q,\lambda_n(q),0)\ \frac{\partial\lambda_n(q)}{\partial q(y)}
	= 0.
\end{equation*}
Recalling \eqref{eq:gradient-psi},
\begin{equation*}
\left.\frac{\partial\psi}{\partial q(y)}(q,\lambda,0)\right|_{\lambda=\lambda_n(q)}
	= s(q,\lambda_n(q),y)\psi(q,\lambda_n(q),y)(1+y)^{-r}.
\end{equation*}
Therefore, the assertion follows after noticing that
\begin{equation*}
s(q,\lambda_n(q),y) = \frac{\psi(q,\lambda_n(q),y)}{\psi'(q,\lambda_n(q),0)}
\end{equation*}
and subsequently using \eqref{eq:the-trick}.
\end{proof}

Let us recall the function $\omega_r:\N\to\R$ defined in \eqref{eq:omega_r}.
As a result of Lemma~\ref{lem:crude-asymp-eigenvalues}, we clearly have
\begin{equation*}
\om(q,\lambda_n(q)) \le C \norm{q}_{\sA_r}\omega_r(n),\quad q\in\sA_r,
\end{equation*}
and
\begin{equation*}
\bom(q,\lambda_n(q)) \le C \norm{q}_{\bsA_r}\omega_r(n),\quad q\in\bsA_r,
\end{equation*}
for some positive constant $C$.

\begin{lemma}
\label{lem:denominator}
\[
\norm{\psi(q,\lambda_n(q),\cdot)}^2_2
	= \bigl(\tfrac32\pi n\bigr)^{1/3} \left[1 + O\bigl(\omega_r(n)\bigr)\right]
\]
uniformly on bounded subsets of $\bsA_r$.
\end{lemma}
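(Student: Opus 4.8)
The plan is to start from identity \eqref{eq:the-trick}, which reads $\norm{\psi(q,\lambda_n(q),\cdot)}_2^2 = -\psi'(q,\lambda_n(q),0)\,\dot\psi(q,\lambda_n(q),0)$, and substitute the decompositions $\psi'(q,z,x) = \psi_0'(z,x) + \varXi'(q,z,x)$ and $\dot\psi(q,z,x) = -\psi_0'(z,x) + \dot\varXi(q,z,x)$ from Lemma~\ref{lem:basic-estimates}, evaluated at $x = 0$, $z = \lambda_n(q)$. Suppressing those arguments (so $\psi_0' = \psi_0'(\lambda_n(q),0)$, $\varXi' = \varXi'(q,\lambda_n(q),0)$, $\dot\varXi = \dot\varXi(q,\lambda_n(q),0)$), this gives
\[
\norm{\psi(q,\lambda_n(q),\cdot)}_2^2
	= \bigl(\psi_0'\bigr)^2 + \psi_0'\bigl(\varXi' - \dot\varXi\bigr) - \varXi'\dot\varXi .
\]
So the task splits into (i) showing the main term $\bigl(\psi_0'\bigr)^2 = \pi\ai'(-\lambda_n(q))^2$ equals $(\tfrac32\pi n)^{1/3}$ up to a factor $1 + O(\omega_r(n))$, and (ii) showing the two remaining terms are $O\bigl(\omega_r(n)(\tfrac32\pi n)^{1/3}\bigr)$.

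First I would do (ii). By Lemma~\ref{lem:crude-asymp-eigenvalues}, $\lambda_n(q) = -a_n + o(1) > 0$ for large $n$, so Lemma~\ref{lem:stupid-lemma} gives $g_A(0 - \lambda_n(q)) = 1$, while $\sigma(0 - \lambda_n(q)) = 1 + \lambda_n(q)^{1/4} \le C(\tfrac32\pi n)^{1/6}$. Hence the $\psi_0$-bound $\abs{\psi_0'(z,x)} \le C_0\sigma(x - z)g_A(x - z)$ yields $\abs{\psi_0'} \le C(\tfrac32\pi n)^{1/6}$; estimate \eqref{eq:bound-psi-prime} together with $\om(q,\lambda_n(q)) \le C\norm{q}_{\sA_r}\omega_r(n)$ yields $\abs{\varXi'} \le C\omega_r(n)(\tfrac32\pi n)^{1/6}$; and — this is the point where $q\in\bsA_r$ rather than merely $q\in\sA_r$ is needed — the refined estimate \eqref{eq:bound-psi-dot} of Lemma~\ref{lem:dot-psi-refined} together with $\bom(q,\lambda_n(q)) \le C\norm{q}_{\bsA_r}\omega_r(n)$ yields $\abs{\dot\varXi} \le C\omega_r(n)(\tfrac32\pi n)^{1/6}$. (The crude bound \eqref{eq:bound-psi-dot-crude} has no decaying $\omega_r(n)$ factor and would leave an error of the same size as the main term.) Multiplying out, both remaining terms are $O\bigl(\omega_r(n)(\tfrac32\pi n)^{1/3}\bigr)$, with all constants depending only on $r$ and the bound on $\norm{q}_{\bsA_r}$; this is where the uniformity claim comes from.

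It remains to settle (i), which is the delicate part. By Lemma~\ref{lem:crude-asymp-eigenvalues}, $-\lambda_n(q) = a_n + O(n^{-2/3+\epsilon})$ for any small fixed $\epsilon > 0$, uniformly. Writing $\ai'(-\lambda_n(q)) - \ai'(a_n) = \int_{a_n}^{-\lambda_n(q)} t\,\ai(t)\,dt$ (using $\ai'' = t\,\ai$) and combining the amplitude bound $\abs{\ai(t)} \le C\abs{t}^{-1/4}$ with $\ai(a_n) = 0$ — which upgrades $\abs{\ai(t)}$ to $O\bigl(\abs{a_n}^{1/4}\abs{t - a_n}\bigr)$ on the short integration interval — and with $\abs{\ai'(a_n)} \asymp \abs{a_n}^{1/4}$, one gets $\ai'(-\lambda_n(q))^2 = \ai'(a_n)^2\bigl(1 + O(n^{-2/3+2\epsilon})\bigr)$. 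Then the standard asymptotics of $\ai'$ at its zeros (see \cite[\S9.9]{nist}, or the Airy-function facts collected in the Appendix) give $\pi\ai'(a_n)^2 = (-a_n)^{1/2}\bigl(1 + O(n^{-2})\bigr)$, while \eqref{eq:zeros-airy} gives $(-a_n)^{1/2} = (\tfrac32\pi n)^{1/3}\bigl(1 + O(n^{-1})\bigr)$. Choosing $\epsilon < 1/6$ so that $n^{-2/3+2\epsilon} = O(\omega_r(n))$ (recall $\omega_r(n) \ge n^{-1/3}$), these combine to $\bigl(\psi_0'\bigr)^2 = \pi\ai'(-\lambda_n(q))^2 = (\tfrac32\pi n)^{1/3}\bigl(1 + O(\omega_r(n))\bigr)$, which together with (ii) finishes the proof. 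The main obstacle, as just indicated, is step (i): one must control $\ai'$ near its zeros finely enough that the only-$O(n^{-2/3+\epsilon})$ localization of the eigenvalues does not spoil the $O(\omega_r(n))$ target, which is exactly what the Taylor-type estimate above, together with the precise behaviour of $\ai'(a_n)$, delivers.
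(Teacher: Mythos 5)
Your proposal is correct and follows essentially the same route as the paper: start from \eqref{eq:the-trick}, insert the decompositions of Lemma~\ref{lem:basic-estimates}\ref{eq:psi-prime}--\ref{eq:psi-dot}, control $\varXi'$ and (crucially, via the refined bound \eqref{eq:bound-psi-dot} of Lemma~\ref{lem:dot-psi-refined}) $\dot{\varXi}$ by $\omega_r(n)$ times the size of $\psi_0'$, and reduce the main term to Airy asymptotics plus the crude localization of Lemma~\ref{lem:crude-asymp-eigenvalues}. The only (immaterial) difference is that you treat $\pi\ai'(-\lambda_n)^2$ by a Taylor-type argument at $a_n$ using $\ai''(t)=t\,\ai(t)$, whereas the paper substitutes $\lambda_n$ directly into \eqref{eq:ai-prime-asymp-negative}; both give the needed $1+O(n^{-2/3+2\epsilon})$ factor, which is absorbed into $O(\omega_r(n))$.
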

\begin{proof}
Let us abbreviate $\lambda_n\defeq\lambda_n(q)$
and consider $\epsilon\in(0,1/6)$ arbitrarily small.

Recalling \eqref{eq:ai-prime-asymp-negative}, we have
\begin{equation*}
\ai'(-\lambda_n)
	= \frac{\lambda_n^{1/4}}{\sqrt{\pi}}
		\left[\sin\left(\tfrac23\lambda_n^{3/2}-\tfrac{\pi}{4}\right)
		+ O(\lambda_n^{-3/2})\right]
\end{equation*}
which, in conjunction with Lemma~\ref{lem:crude-asymp-eigenvalues}, yields
\begin{equation}
\label{eq:one}
\ai'(-\lambda_n)
	= \frac{(-1)^{n+1}}{\sqrt{\pi}}\bigl(\tfrac32\pi n\bigr)^{1/6}
	\left[1 + O\bigl(n^{-2/3+2\epsilon}\bigr)\right]
\end{equation}
uniformly on bounded subsets of $\bsA_r$ (notice that bounded subsets in $\bsA_r$
induces bounded subsets in $\sA_r$).

Next, we recall that
\[
\norm{\psi(q,\lambda_n,\cdot)}^2_2
	= -\psi'(q,\lambda_n,0)\dot{\psi}(q,\lambda_n,0)
\]
whence, using \eqref{eq:psi-0} and
Lemma~\ref{lem:basic-estimates}\ref{eq:psi-prime}-\ref{eq:psi-dot}, we obtain
\begin{equation*}
\norm{\psi(q,\lambda_n,\cdot)}^2_2
	= \pi\left(\ai'(-\lambda_n)\right)^2
		\left[1 + \frac{\varXi'(q,\lambda_n,0)}{\sqrt{\pi}\ai'(-\lambda_n)}
		- \frac{\dot{\varXi}(q,\lambda_n,0)}{\sqrt{\pi}\ai'(-\lambda_n)}
		- \frac{\dot{\varXi}(q,\lambda_n,0)\varXi'(q,\lambda_n,0)}
			{\pi\left(\ai'(-\lambda_n)\right)^2}\right]
\end{equation*}
where division by 0 is avoided if $n$ is assumed sufficiently large. Moreover,
\eqref{eq:bound-psi-prime} implies
\[
\abs{\varXi'(q,\lambda_n,0)}
	\le C\sigma(\lambda_n)\om(q,\lambda_n)
\]
so, in view of \eqref{eq:one},
\begin{equation}
\label{eq:two}
\frac{\varXi'(q,\lambda_n,0)}{\sqrt{\pi}\ai'(-\lambda_n)}
	= O\bigl(\omega_r(n)\bigr)
\end{equation}
uniformly on bounded subsets in $\bsA_r$. Similarly, this time using
\eqref{eq:bound-psi-dot}, one obtains
\begin{equation}
\label{eq:three}
\frac{\dot{\varXi}(q,\lambda_n,0)}{\sqrt{\pi}\ai'(-\lambda_n)}
	= O\bigl(\omega_r(n)\bigr)
\end{equation}
also uniformly on bounded subsets in $\bsA_r$.
The assertion now follows from \eqref{eq:one}, \eqref{eq:two} and \eqref{eq:three}.
\end{proof}

\begin{theorem}
\label{thm:eigenvalues}
\begin{equation}
\label{eq:eigenvalues}
\lambda_n(q)
	= -a_n + \pi \frac{\int_0^\infty \ai^2(x+a_n)q(x)dx}{(-a_n)^{1/2}}
		+ O\bigl(n^{-1/3}\omega_r^2(n)\bigr),
\end{equation}
uniformly on bounded subsets of $\bsA_r$.
\end{theorem}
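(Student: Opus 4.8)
The plan is to linearize the defining relation $\psi(q,\lambda_n(q),0)=0$ about the unperturbed eigenvalue $-a_n$ and read off the leading correction from the Volterra decomposition of $\psi$. Abbreviating $\lambda_n=\lambda_n(q)$, I would write $0=\psi(q,\lambda_n,0)=\psi_0(\lambda_n,0)+\varXi(q,\lambda_n,0)$, so that $\psi_0(\lambda_n,0)=-\varXi(q,\lambda_n,0)$. Two facts drive the argument. First, $\psi_0(\lambda,0)=\sqrt{\pi}\,\ai(-\lambda)$ vanishes to first order at $-a_n$, and Taylor's theorem together with the Airy equation $\ai''(w)=w\ai(w)$ gives $\psi_0(\lambda_n,0)=-\sqrt{\pi}\,\ai'(a_n)(\lambda_n+a_n)+\tfrac12\sqrt{\pi}(-\xi_n)\ai(-\xi_n)(\lambda_n+a_n)^2$ for some $\xi_n$ between $-a_n$ and $\lambda_n$. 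Second, by Lemma~\ref{lem:basic-estimates}\ref{eq:psi}, Lemma~\ref{lem:about-omega}, Lemma~\ref{lem:stupid-lemma} (which gives $g_A(-\lambda_n)=1$ for large $n$) and the crude localization $\lambda_n+a_n=O(n^{-2/3+\epsilon})$ of Lemma~\ref{lem:crude-asymp-eigenvalues}, one has $\abs{\varXi(q,\lambda_n,0)}\le C\om(q,\lambda_n)e^{C\om(q,\lambda_n)}/\sigma(\lambda_n)=O(\omega_r(n)n^{-1/6})$ uniformly on bounded subsets of $\bsA_r$. Since $\abs{\ai'(a_n)}\asymp n^{1/6}$ while the quadratic remainder is $O(n^{-7/6+3\epsilon})$ (the dangerous factor $-\xi_n\asymp n^{2/3}$ being tamed by $\ai(-\xi_n)=O(n^{-1/2+\epsilon})$), dividing through gives $\lambda_n+a_n=\varXi(q,\lambda_n,0)/\bigl(\sqrt{\pi}\,\ai'(a_n)\bigr)+O(n^{-4/3+3\epsilon})$, hence the bootstrapped bound $\lambda_n+a_n=O(n^{-1/3}\omega_r(n))$; this is the accuracy needed for the substitutions $\lambda_n\mapsto -a_n$ that follow.

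Next I would insert the refined decomposition of Remark~\ref{rem:more-about-psi}, $\varXi(q,\lambda_n,0)=\psi_1(q,\lambda_n,0)+O(\omega_r^2(n)n^{-1/6})$, with $\psi_1(q,\lambda_n,0)=-\int_0^\infty J_0(\lambda_n,0,y)\psi_0(\lambda_n,y)q(y)dy$. Using $J_0(z,0,y)=\theta_0(z,0)\psi_0(z,y)-\psi_0(z,0)\theta_0(z,y)$ from \eqref{eq:green-function-0}, this becomes $\psi_1(q,\lambda_n,0)=-\theta_0(\lambda_n,0)\int_0^\infty\psi_0^2(\lambda_n,y)q(y)dy+\psi_0(\lambda_n,0)\int_0^\infty\theta_0(\lambda_n,y)\psi_0(\lambda_n,y)q(y)dy$; the bounds \eqref{eq:vbe-psi0}--\eqref{eq:vbe-theta0} make the last integral $O(\om(q,\lambda_n))=O(\omega_r(n))$, and since $\psi_0(\lambda_n,0)=-\varXi(q,\lambda_n,0)=O(\omega_r(n)n^{-1/6})$ the second summand is $O(\omega_r^2(n)n^{-1/6})$. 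With $\theta_0(\lambda_n,0)=\sqrt{\pi}\,\bi(-\lambda_n)$ and $\psi_0^2(\lambda_n,y)=\pi\ai^2(y-\lambda_n)$, feeding this back into the formula for $\lambda_n+a_n$ (and dividing the two $O(\omega_r^2(n)n^{-1/6})$ errors by $\ai'(a_n)\asymp n^{1/6}$) yields $\lambda_n+a_n=-\dfrac{\pi\,\bi(-\lambda_n)}{\ai'(a_n)}\displaystyle\int_0^\infty\ai^2(y-\lambda_n)q(y)dy+O(n^{-1/3}\omega_r^2(n))$, the earlier $O(n^{-4/3+3\epsilon})$ being absorbed since $n^{-1/3}\omega_r^2(n)\ge n^{-1}$.

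It remains to simplify the prefactor and the integrand, replacing $\lambda_n$ by $-a_n$ throughout. First, $\abs{\bi(-\lambda_n)-\bi(a_n)}=O(\abs{\bi'}\,\abs{\lambda_n+a_n})=O(n^{1/6}\cdot n^{-1/3}\omega_r(n))$, which after division by $\ai'(a_n)\asymp n^{1/6}$ and multiplication by the $O(\omega_r(n))$ integral contributes only $O(n^{-1/3}\omega_r^2(n))$. The Airy Wronskian $\ai(a_n)\bi'(a_n)-\ai'(a_n)\bi(a_n)=1/\pi$ together with $\ai(a_n)=0$ gives $\bi(a_n)=-1/\bigl(\pi\ai'(a_n)\bigr)$, so the prefactor collapses to $1/\bigl(\ai'(a_n)\bigr)^2$; by \eqref{eq:ai-prime-asymp-negative} and \eqref{eq:zeros-airy}, $\bigl(\ai'(a_n)\bigr)^2=\pi^{-1}(-a_n)^{1/2}\bigl(1+O(n^{-1})\bigr)$, so the prefactor equals $\pi(-a_n)^{-1/2}\bigl(1+O(n^{-1})\bigr)$, the correction contributing $O(n^{-4/3}\omega_r(n))$. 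Finally, using $\sup_{w\in\R}\abs{\ai(w)\ai'(w)}<\infty$ (Appendix), $\abs{\ai^2(y-\lambda_n)-\ai^2(y+a_n)}\le C\abs{\lambda_n+a_n}=O(n^{-1/3}\omega_r(n))$ uniformly in $y\ge0$, so replacing the integrand and using $\norm{q}_1\le(r-1)^{-1/2}\norm{q}_{\sA_r}$ changes the integral, weighted by the $O(n^{-1/3})$ prefactor, by $O(n^{-2/3}\omega_r(n))$, which is $\le O(n^{-1/3}\omega_r^2(n))$ because $\omega_r(n)\ge n^{-1/3}$. Collecting everything gives \eqref{eq:eigenvalues}, with all $O$-terms uniform on bounded subsets of $\bsA_r$ since the underlying estimates are.

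I expect the main obstacle to be the bootstrapping in the first step: none of the substitutions $\lambda_n\mapsto-a_n$ (in the integral, in $\bi$, in the $\ai'$ prefactor) are licit within the target error $O(n^{-1/3}\omega_r^2(n))$ until one has upgraded the crude localization from $O(n^{-2/3+\epsilon})$ to $O(n^{-1/3}\omega_r(n))$, and the delicate point is verifying that the quadratic Taylor remainder of $\ai(-\lambda)$ at $-a_n$ is negligible in spite of the growing factor $-\xi_n\asymp n^{2/3}$ --- which holds precisely because $\ai$ is itself $O(n^{-1/2+\epsilon})$ in an $O(n^{-2/3+\epsilon})$-neighborhood of its zero.
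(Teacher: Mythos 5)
Your proposal is correct, but it follows a genuinely different route from the paper. You argue directly from the secular equation $\psi(q,\lambda_n,0)=0$: a Taylor expansion of $\sqrt{\pi}\,\ai(-\lambda)$ at $-a_n$ (with the quadratic remainder tamed by the crude localization), a bootstrap to $\lambda_n+a_n=O(n^{-1/3}\omega_r(n))$, the refined decomposition $\varXi=\psi_1+\varXi^{(2)}$ of Remark~\ref{rem:more-about-psi} together with \eqref{eq:green-function-0} to isolate the leading term, and the Airy Wronskian $\bi(a_n)=-1/\bigl(\pi\ai'(a_n)\bigr)$ to identify the prefactor as $\pi(-a_n)^{-1/2}$; the error bookkeeping ($O(\omega_r^2(n)n^{-1/6})$ terms divided by $\ai'(a_n)\asymp n^{1/6}$, absorption of $n^{-4/3+3\epsilon}$ for $\epsilon$ small, and the mean-value replacement $\lambda_n\mapsto -a_n$ in the integrand and in $\bi$) all checks out, and the nice observation $\psi_0(\lambda_n,0)=-\varXi(q,\lambda_n,0)$ disposes of the cross term in $\psi_1$. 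The paper instead exploits the real analyticity of $\lambda_n$ (Proposition~\ref{lem:eingenvalue-is-real-analytic}): it writes $\lambda_n(q)+a_n=\int_0^1\inner{\eta_n^2(tq,\cdot)}{q}\,dt$, expands $\psi^2(tq,\lambda_n,\cdot)$ around $\pi\ai^2(\cdot-\lambda_n)$, normalizes via Lemma~\ref{lem:denominator}, and performs the same mean-value substitution $\lambda_n\mapsto-a_n$. What your approach buys is economy for this particular theorem: it needs only the Volterra estimates of Section~\ref{sec:estimates}, the crude localization, and the Appendix, not the Fréchet-differentiability machinery of Section~\ref{sec:frechet} nor Lemma~\ref{lem:denominator}; what the paper's gradient/fundamental-theorem-in-$t$ method buys is that it runs along the segment $tq$ uniformly and is exactly the template reused for the norming constants in Theorem~\ref{thm:norming-constants}, where the analogue of your direct expansion would be considerably messier. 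One small caveat: in your final absorption step you need $\epsilon$ chosen small enough (e.g.\ $\epsilon\le 1/9$) so that $n^{-4/3+3\epsilon}\le n^{-1}\le n^{-1/3}\omega_r^2(n)$; since $\epsilon$ is arbitrary this is harmless, but it should be said.
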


\begin{proof}
Due to Proposition~\ref{lem:eingenvalue-is-real-analytic},
\begin{equation*}
\frac{d}{dt}\lambda_n(tq)
	= d_{tq}\lambda_n(q)
	= \inner{\frac{\partial\lambda_n}{\partial(tq)}}{q}_{\sA_r}
	= \inner{\eta^2_n(tq,\cdot)}{q}.
\end{equation*}
Here and henceforth we write
\[
\inner{\eta^2_n(tq,\cdot)}{q} \defeq \int_0^\infty \eta^2_n(tq,x)q(x)dx
\]
just for the sake of brevity; we shall see shortly that the integral is indeed finite.
Then,
\begin{equation*}
\lambda_n(q) + a_n
	= \int_0^1 \inner{\eta^2_n(tq,\cdot)}{q} dt.
\end{equation*}

From now on, $\lambda_n = \lambda_n(tq)$. Given a bounded subset in $\bsA_r$, let
$\cU$ be the induced bounded subset in $\sA_r$. Without
loss of generality we can assume $\cU$ is convex with $0\in\cU$ so
$q\in\cU$ implies $tq\in\cU$ for $t\in[0,1]$.
Recalling Lemma~\ref{lem:basic-estimates}\ref{eq:psi}, one can write
\begin{equation*}
\psi^2(tq,\lambda_n,x)
	= \pi \ai^2(x-\lambda_n) + \varPsi_n(x),
\end{equation*}
where
\begin{equation*}
\varPsi_n(x) = 2\sqrt{\pi}\ai(x-\lambda_n)\varXi(tq,\lambda_n,x) + \varXi^2(tq,\lambda_n,x).
\end{equation*}
Because of \eqref{eq:bound-psi}, one can see that
\begin{equation*}
\abs{\varPsi_n(x)}
	\le C \om(tq,\lambda_n)g_A(x-\lambda_n)
	\left(1+\abs{x-\lambda_n}\right)^{-1/2},
\end{equation*}
and this inequality holds uniformly on $\cU\times[0,1]$. Therefore,
\begin{equation*}
\abs{\inner{\varPsi_n}{q}}
	\le \int_0^\infty\abs{\varPsi_n(x)q(x)}dx
	\le C\om^2(tq,\lambda_n)
\end{equation*}
uniformly on $\cU\times[0,1]$. Recalling Lemma~\ref{lem:crude-asymp-eigenvalues},
this in turn implies
\begin{equation*}
\inner{\psi^2(tq,\lambda_n,\cdot)}{q}
	= \pi \inner{\ai^2(\cdot - \lambda_n)}{q} + O\bigl(\omega_r^2(n)\bigr)
\end{equation*}
uniformly on $\cU\times[0,1]$. Moreover,
\begin{equation}
\label{eq:almost-there}
\abs{\inner{\ai^2(\cdot - \lambda_n)}{q}}
	\le C\norm{q}_{\sA_r}\left[\frac{\log(2+\abs{\lambda_n})}{2+\abs{\lambda_n}}\right]^{1/2}
\end{equation}
so Lemma~\ref{lem:denominator} implies
\begin{equation*}
\lambda_n(q) + a_n
	= \pi\frac{\int_0^1\inner{\ai^2(\cdot-\lambda_n(tq))}{q} dt}
		{\bigl(\tfrac32\pi n\bigr)^{1/3}}
		+ O\bigl(n^{-1/3}\omega_r^2(n)\bigr)
\end{equation*}
uniformly on bounded subsets of $\bsA_r$; note that \eqref{eq:almost-there} already implies
\begin{equation}
\label{eq:note}
\lambda_n(q) + a_n = O\bigl(n^{-1/3}\omega_r(n)\bigr).
\end{equation}

The Mean Value Theorem implies
\begin{equation}
\label{eq:mean-value}
\abs{\ai^2(x-\lambda_n) - \ai^2(x+a_n)}
	= 2\abss{\ai(x-s_{n}(x))}\abss{\ai'(x-s_{n}(x))}\abs{\lambda_n + a_n},
\end{equation}
where either
\begin{equation*}
s_{n}(x)\in(-a_n,\lambda_n)\quad \text{or} \quad s_{n}(x)\in(\lambda_n,-a_n);
\end{equation*}
it is not difficult to see that $s_n$ is indeed a continuous function. Therefore,
\begin{multline*}
\int_0^\infty \abs{\ai^2(x-\lambda_n) - \ai^2(x+a_n)}\abs{q(x)} dx
\\
	\le 2\abs{\lambda_n + a_n}\norm{q}_{\sA_r}
		\left(\int_0^\infty \abss{\ai(x-s_{n}(x))}^2
			\abss{\ai'(x-s_{n}(x))}^2(1+x)^{-r} dx \right)^{1/2}
\end{multline*}
but then, recalling \eqref{eq:note} and Lemma~\ref{lem:g_Ag_C}, and noting that $g_A(x-s)\le 1$,
we obtain 
\[
\int_0^\infty \left[\ai^2(x-\lambda_n) - \ai^2(x+a_n)\right]q(x) dx
	= O\bigl(n^{-1/3}\omega_r(n)\bigr).
\]
Therefore,
\begin{equation*}
\lambda_n(q) + a_n
	= \pi\frac{\int_0^1\inner{\ai^2(\cdot+a_n)}{q} dt}
		{\bigl(\tfrac32\pi n\bigr)^{1/3}}
		+ O\bigl(n^{-1/3}\omega_r^2(n)\bigr)
\end{equation*}
uniformly on bounded subsets of $\bsA_r$.
Since
$(-a_n)^{1/2}(\tfrac32\pi n)^{-1/3} = 1 + O(n^{-1})$, the stated result follows.
\end{proof}


\section{More auxiliary results}
\label{sec:auxiliary}

In what follows we use the abbreviated notation
\[
\lambda_n = \lambda_n(q);\quad
\psi_n = \psi(q,\lambda_n,0),\quad
\psi_n' = \psi'(q,\lambda_n,0),\quad
\dot{\psi}_n = \dot{\psi}(q,\lambda_n,0),\quad\text{et cetera.}
\]
We also introduce the notation
\[
\alpha_n = \psi_0(\lambda_n,0),\quad
\alpha_n' = \psi_0'(\lambda_n,0),\quad
\beta_n = \theta_0(\lambda_n,0),\quad\text{and}\quad
\beta_n' = \theta_0'(\lambda_n,0).
\]
The following asymptotics follow easily from combining \eqref{eq:eigenvalues} with
\eqref{eq:ai-asymp-negative}, \eqref{eq:ai-prime-asymp-negative}, \eqref{eq:bi-1}
and \eqref{eq:bi-prime-asymp-negative}.

\begin{lemma}
\label{lem:alpha-etcetera}
The following asymptotic expressions:
\begin{align*}
\alpha_n  &= O\bigl(n^{-1/6}\omega_r(n)\bigr),
\\[1mm]
\alpha_n' &= (-1)^{n+1}\bigl(\tfrac32\pi n\bigr)^{1/6}
			\left[1 + O\bigl(\omega_r^2(n)\bigr)\right],
\\[1mm]
\beta_n   &= (-1)^{n}\bigl(\tfrac32\pi n\bigr)^{-1/6}
			\left[1 + O\bigl(\omega_r^2(n)\bigr)\right],
\\[1mm]
\beta_n'  &= O\bigl(n^{1/6}\omega_r(n)\bigr),
\end{align*}
holds uniformly on bounded subsets of $\bsA_r$.
\end{lemma}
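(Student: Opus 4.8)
The plan is to read off each of the four asymptotics from the explicit formulas for $\psi_0$ and $\theta_0$ at $x=0$, namely $\alpha_n = \sqrt{\pi}\,\ai(-\lambda_n)$, $\alpha_n' = -\sqrt{\pi}\,\ai'(-\lambda_n)$ (using $\psi_0'(z,x) = \sqrt\pi\,\ai'(x-z)$ and $\partial_x(x-z) = 1$), $\beta_n = \sqrt{\pi}\,\bi(-\lambda_n)$ and $\beta_n' = -\sqrt{\pi}\,\bi'(-\lambda_n)$, and then substitute the eigenvalue asymptotics of Theorem~\ref{thm:eigenvalues} into the known large-argument expansions of $\ai$, $\ai'$, $\bi$, $\bi'$ quoted in the Appendix (equations \eqref{eq:ai-asymp-negative}, \eqref{eq:ai-prime-asymp-negative}, \eqref{eq:bi-1}, \eqref{eq:bi-prime-asymp-negative}). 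The key input is that, by Theorem~\ref{thm:eigenvalues} together with Lemma~\ref{lem:about-omega}, one has $-\lambda_n = -a_n + O(n^{-1/3}\omega_r(n))$ uniformly on bounded subsets of $\sA_r$ (this is exactly \eqref{eq:note} in the proof of Theorem~\ref{thm:eigenvalues}), while $-a_n = (\tfrac32\pi(n-\tfrac14))^{2/3} + O(n^{-4/3})$ by \eqref{eq:zeros-airy}; hence $\tfrac23\lambda_n^{3/2} = \pi(n-\tfrac14) + O(n^{-1/3}\omega_r(n))$, so the oscillatory factors $\sin(\tfrac23\lambda_n^{3/2}\mp\tfrac\pi4)$ and $\cos(\tfrac23\lambda_n^{3/2}\mp\tfrac\pi4)$ collapse to $\pm(-1)^{n+1}$ or $\pm(-1)^n$ up to an $O(n^{-1/3}\omega_r(n))$ error, and $\lambda_n^{\pm1/4} = (\tfrac32\pi n)^{\pm1/6}(1+O(n^{-1}))$.

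The steps, in order, would be: (i) record the identities expressing $\alpha_n,\alpha_n',\beta_n,\beta_n'$ in terms of $\ai,\ai',\bi,\bi'$ evaluated at $-\lambda_n$; (ii) state the two ingredients $-\lambda_n = -a_n + O(n^{-1/3}\omega_r(n))$ and $\tfrac23(-a_n)^{3/2} = \pi(n-\tfrac14) + O(n^{-1})$, and combine them to get $\tfrac23\lambda_n^{3/2} - \tfrac\pi4 = \pi n - \tfrac\pi2 + O(n^{-1/3}\omega_r(n))$ and $\tfrac23\lambda_n^{3/2} + \tfrac\pi4 = \pi n + O(n^{-1/3}\omega_r(n))$; (iii) for $\alpha_n$: in the expansion $\ai(-\lambda_n) = \pi^{-1/2}\lambda_n^{-1/4}[\sin(\tfrac23\lambda_n^{3/2}+\tfrac\pi4) + O(\lambda_n^{-3/2})]$, the sine equals $\sin(\pi n + O(n^{-1/3}\omega_r(n))) = O(n^{-1/3}\omega_r(n))$ since $\sin(\pi n)=0$ and $|\sin(\pi n + \delta)| \le |\delta|$; multiplying by $\lambda_n^{-1/4} \asymp n^{-1/6}$ gives $\alpha_n = O(n^{-1/6}\omega_r(n))$ after absorbing the lower-order $O(\lambda_n^{-3/2})\cdot n^{-1/6}$ term; (iv) for $\alpha_n'$: in $\ai'(-\lambda_n) = -\pi^{-1/2}\lambda_n^{1/4}[\cos(\tfrac23\lambda_n^{3/2}+\tfrac\pi4) + O(\lambda_n^{-3/2})]$, one has $\cos(\pi n + \delta) = (-1)^n + O(\delta^2)$, so using $\omega_r^2(n)$ as the natural square of $\omega_r(n)\gtrsim n^{-1/3}$ (and checking $n^{-1/3}\omega_r(n) \ll \omega_r^2(n)$ is false in general — one keeps the larger error, so the error from the cosine is $O((n^{-1/3}\omega_r(n))^2)$, and the dominant error is the $O(\lambda_n^{-3/2}) = O(n^{-1})$ term times… ) — here one must track which error dominates and arrange the statement as $[1 + O(\omega_r^2(n))]$; (v) repeat (iii)–(iv) for $\bi,\bi'$ with the sign/phase conventions in \eqref{eq:bi-1} and \eqref{eq:bi-prime-asymp-negative}, yielding $\beta_n$ with leading term $(-1)^n(\tfrac32\pi n)^{-1/6}$ and $\beta_n' = O(n^{1/6}\omega_r(n))$.

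The only delicate point — and the one I expect to be the main obstacle — is bookkeeping the error terms so that each stated remainder is honest and uniform. Concretely: the phase error is $O(n^{-1/3}\omega_r(n))$, which after squaring (for the cosine/non-vanishing cases) contributes $O(n^{-2/3}\omega_r^2(n))$, whereas the intrinsic Airy-expansion error is $O(\lambda_n^{-3/2}) = O(n^{-1})$; since $\omega_r(n) \ge n^{-1/3}$, one has $n^{-2/3}\omega_r^2(n) \le n^{-4/3} \le n^{-1}$, so in the $[1+O(\cdot)]$ normalization the remainder is governed by neither squared phase nor $n^{-1}$ but one still writes $O(\omega_r^2(n))$ because that is the weakest bound needed downstream and is certainly valid ($\omega_r^2(n)\ge n^{-2/3} \ge n^{-1}$). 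For the vanishing cases ($\alpha_n,\beta_n'$) the sine is only first order in the phase error, giving $O(n^{-1/3}\omega_r(n))$, and multiplying by $\lambda_n^{\pm1/4}\asymp n^{\pm1/6}$ produces the claimed $O(n^{-1/6}\omega_r(n))$ and $O(n^{1/6}\omega_r(n))$. Uniformity on bounded subsets of $\sA_r$ is inherited verbatim from the uniformity in \eqref{eq:note} and Lemma~\ref{lem:about-omega}, since those are the only $q$-dependent inputs; everything else is a deterministic statement about Airy functions. I would therefore present this as a short computation, citing \eqref{eq:zeros-airy}, Theorem~\ref{thm:eigenvalues} (in the form \eqref{eq:note}), and the four Appendix expansions, and spend the bulk of the written proof on one representative case (say $\alpha_n'$) with the other three dispatched by "an entirely analogous computation."
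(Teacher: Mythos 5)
Your overall route is exactly the paper's: the paper proves this lemma by simply combining the eigenvalue asymptotics of Theorem~\ref{thm:eigenvalues} with the Appendix expansions \eqref{eq:ai-asymp-negative}--\eqref{eq:bi-prime-asymp-negative}, which is what you propose. However, two concrete points in your write-up are wrong as stated. First, a sign error in the basic identities: since $\psi_0(z,x)=\sqrt{\pi}\,\ai(x-z)$, one has $\psi_0'(z,0)=\sqrt{\pi}\,\ai'(-z)$, so $\alpha_n'=+\sqrt{\pi}\,\ai'(-\lambda_n)$ and $\beta_n'=+\sqrt{\pi}\,\bi'(-\lambda_n)$; the minus sign belongs to the $z$-derivative ($\dot{\psi}_0=-\psi_0'$), not the $x$-derivative. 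With your $\alpha_n'=-\sqrt{\pi}\,\ai'(-\lambda_n)$ and your (correct) rewriting $\ai'(-\lambda_n)=-\pi^{-1/2}\lambda_n^{1/4}\bigl[\cos\bigl(\tfrac23\lambda_n^{3/2}+\tfrac{\pi}{4}\bigr)+O(\lambda_n^{-3/2})\bigr]$, the leading term of $\alpha_n'$ comes out as $(-1)^{n}\bigl(\tfrac32\pi n\bigr)^{1/6}$, the opposite of what the lemma asserts; this is harmless for $\beta_n'$ (a pure $O$-bound) but fatal for $\alpha_n'$.

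Second, the phase-error bookkeeping in your step (ii) is incorrect, and the subsequent accounting is internally inconsistent. From $\lambda_n+a_n=O\bigl(n^{-1/3}\omega_r(n)\bigr)$ (your \eqref{eq:note}) the induced error in the phase is $\tfrac23\lambda_n^{3/2}-\tfrac23(-a_n)^{3/2}=(-a_n)^{1/2}(\lambda_n+a_n)\bigl(1+o(1)\bigr)=O\bigl(\omega_r(n)\bigr)$, because the eigenvalue error is amplified by $\frac{d}{d\lambda}\tfrac23\lambda^{3/2}=\lambda^{1/2}\asymp n^{1/3}$; it is \emph{not} $O\bigl(n^{-1/3}\omega_r(n)\bigr)$ as you claim. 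Your later arithmetic does not even follow from your own claim: a sine error of size $n^{-1/3}\omega_r(n)$ times $\lambda_n^{\pm1/4}\asymp n^{\pm1/6}$ would give $O\bigl(n^{-1/2}\omega_r(n)\bigr)$ and $O\bigl(n^{-1/6}\omega_r(n)\bigr)$, not the claimed $O\bigl(n^{-1/6}\omega_r(n)\bigr)$ and $O\bigl(n^{1/6}\omega_r(n)\bigr)$; moreover those stronger bounds are false in general, since $\alpha_n=-\sqrt{\pi}\,\ai'(a_n)(\lambda_n+a_n)+\cdots$ with $\abs{\ai'(a_n)}\asymp n^{1/6}$ and $\lambda_n+a_n$ generically of exact order $n^{-1/3}\omega_r(n)$. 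With the corrected phase error $O\bigl(\omega_r(n)\bigr)$ everything lands precisely on the lemma: $\abs{\sin(\pi n+O(\omega_r(n)))}=O(\omega_r(n))$ gives $\alpha_n$ and (with the factor $n^{1/6}$) $\beta_n'$; the cosine terms contribute $1+O\bigl(\omega_r^2(n)\bigr)$; and the Airy remainders $O(\zeta^{-1})=O(n^{-1})$ together with $\lambda_n^{1/4}=\bigl(\tfrac32\pi n\bigr)^{1/6}\bigl(1+O(n^{-1})\bigr)$ are absorbed because $\omega_r^2(n)\ge n^{-2/3}\ge n^{-1}$. Your remarks on uniformity are fine; fix these two items and the argument is the intended one.
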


\begin{lemma}
\label{lem:first-parenthesis}
\[
\frac{\varXi_n'}{\psi_n'} - \frac{\dot{\varXi}_n}{\dot{\psi}_n}
	= O\bigl(\omega_r^2(n)\bigr),
\]
uniformly on bounded subsets of $\bsA_r$.
\end{lemma}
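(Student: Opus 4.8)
The plan is to put the difference over a common denominator, where a cancellation between $\varXi_n'$ and $\dot{\varXi}_n$ will occur. Write $\alpha_n' = \psi_0'(\lambda_n,0)$ as in Section~\ref{sec:auxiliary}. By Lemma~\ref{lem:basic-estimates}\ref{eq:psi-prime} and \eqref{eq:psi-dot-crude} we have $\psi_n' = \alpha_n' + \varXi_n'$ and $\dot{\psi}_n = -\alpha_n' + \dot{\varXi}_n$, so the two error terms enter with opposite sign relative to the common dominant term $\alpha_n'$. Hence a direct computation (in which the cross terms $\pm\varXi_n'\dot{\varXi}_n$ cancel) gives
\[
\frac{\varXi_n'}{\psi_n'} - \frac{\dot{\varXi}_n}{\dot{\psi}_n}
	= \frac{-\alpha_n'\bigl(\varXi_n' + \dot{\varXi}_n\bigr)}{\psi_n'\,\dot{\psi}_n},
\]
and it suffices to estimate the three factors $\alpha_n'$, $\varXi_n' + \dot{\varXi}_n$, and $\psi_n'\dot{\psi}_n$.

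The denominator is handled by \eqref{eq:the-trick} and Lemma~\ref{lem:denominator}: $\psi_n'\dot{\psi}_n = -\norm{\psi(q,\lambda_n,\cdot)}_2^2 = -(\tfrac32\pi n)^{1/3}[1+O(\omega_r(n))]$ uniformly on bounded subsets of $\bsA_r$, so $|\psi_n'\dot{\psi}_n|\ge\tfrac12(\tfrac32\pi n)^{1/3}$ for $n$ large. For the factor $\alpha_n'$, Lemma~\ref{lem:alpha-etcetera} gives $|\alpha_n'| = O(n^{1/6})$. The crucial factor is $\varXi_n' + \dot{\varXi}_n$. Here we use the cancellation $\dot{\psi}_0 = -\psi_0'$: since $\psi' = \psi_0' + \varXi'$ and $\dot{\psi} = -\psi_0' + \dot{\varXi}$, we get $\varXi_n' + \dot{\varXi}_n = \psi'(q,\lambda_n,0) + \dot{\psi}(q,\lambda_n,0)$, and this sum is precisely the quantity bounded in the proof of Lemma~\ref{lem:dot-psi-refined}, namely $|\dot{\psi}(z,x) + \psi'(z,x)| \le C\bom(q,z)e^{C\bom(q,z)}g_A(x-z)/\sigma(x-z)$. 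Evaluating at $x=0$ and $z=\lambda_n$ (with $\lambda_n>0$ for $n$ large), Lemma~\ref{lem:stupid-lemma} gives $g_A(-\lambda_n)=1$, while $\sigma(\lambda_n)\asymp\lambda_n^{1/4}\asymp n^{1/6}$ and $\bom(q,\lambda_n)\le C\norm{q}_{\bsA_r}\omega_r(n)$ (so $e^{C\bom(q,\lambda_n)} = O(1)$ on bounded subsets of $\bsA_r$). Therefore $|\varXi_n' + \dot{\varXi}_n| = O(n^{-1/6}\omega_r(n))$ uniformly on bounded subsets of $\bsA_r$.

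Substituting the three estimates into the identity above,
\[
\left|\frac{\varXi_n'}{\psi_n'} - \frac{\dot{\varXi}_n}{\dot{\psi}_n}\right|
	\le \frac{O(n^{1/6})\cdot O(n^{-1/6}\omega_r(n))}{\tfrac12(\tfrac32\pi n)^{1/3}}
	= O\bigl(n^{-1/3}\omega_r(n)\bigr) = O\bigl(\omega_r^2(n)\bigr),
\]
the last equality because $\omega_r(n)\ge n^{-1/3}$. The main obstacle is conceptual rather than computational: the individual bounds $\varXi_n', \dot{\varXi}_n = O(n^{1/6}\omega_r(n))$ only yield $\varXi_n'/\psi_n', \dot{\varXi}_n/\dot{\psi}_n = O(\omega_r(n))$, a full power of $\omega_r(n)$ too weak; the gain of the extra factor $n^{-1/3}$ (equivalently $\sigma^{-2}$) comes entirely from the cancellation $\varXi_n' + \dot{\varXi}_n = \psi' + \dot{\psi}$ supplied by the refined Lemma~\ref{lem:dot-psi-refined}, so the argument must be routed through the displayed identity rather than through the two summands separately.
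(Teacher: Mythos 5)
Your proof is correct, and it follows a genuinely different (and arguably cleaner) route than the paper's. The paper expands each quotient around $\alpha_n'$, writing $\frac{\varXi_n'}{\psi_n'}-\frac{\dot{\varXi}_n}{\dot{\psi}_n}=\frac{\varXi_n'+\dot{\varXi}_n}{\alpha_n'}+O(\omega_r^2(n))$, and then extracts the cancellation from the decomposition of Remark~\ref{rem:more-about-psi}: the first-order terms $\pm\psi_1'$ cancel, the residual $\dot{\psi}_{1,n}^{\text{res}}$ (which carries $q'$) is estimated explicitly through its representation in terms of $\alpha_n,\beta_n$ and Lemmas~\ref{lem:about-omega} and \ref{lem:alpha-etcetera}, and the second-order pieces $\varXi_n^{(2)\prime},\dot{\varXi}_n^{(2)}$ give $O(\omega_r^2(n))$. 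You instead use the exact common-denominator identity $\frac{\varXi_n'}{\psi_n'}-\frac{\dot{\varXi}_n}{\dot{\psi}_n}=\frac{-\alpha_n'(\varXi_n'+\dot{\varXi}_n)}{\psi_n'\dot{\psi}_n}$ (the cross terms do cancel, since $\psi_n'=\alpha_n'+\varXi_n'$ and $\dot{\psi}_n=-\alpha_n'+\dot{\varXi}_n$), control the denominator via \eqref{eq:the-trick} and Lemma~\ref{lem:denominator}, and bound the crucial sum by observing $\varXi_n'+\dot{\varXi}_n=\psi_n'+\dot{\psi}_n$ and invoking the estimate $\abs{\dot{\psi}+\psi'}\le C\bom\,e^{C\bom}g_A/\sigma$ established inside the proof of Lemma~\ref{lem:dot-psi-refined}, which at $x=0$, $z=\lambda_n$ gives $O(n^{-1/6}\omega_r(n))$. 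Both arguments rest on the same underlying mechanism (the cancellation $\dot{\psi}_0=-\psi_0'$ together with the integration-by-parts transfer to $q'$ that produces the $\bom/\sigma$ bound), but your packaging avoids Remark~\ref{rem:more-about-psi} entirely, has no quadratic remainder to track, and in fact yields the slightly sharper rate $O(n^{-1/3}\omega_r(n))$, which dominates $O(\omega_r^2(n))$. The only caveat is presentational: the bound on $\dot{\psi}+\psi'$ you quote is a display inside the proof of Lemma~\ref{lem:dot-psi-refined}, not part of its statement (which only records the weaker $\sigma g_A$ bound on $\dot{\varXi}$); since that display is explicitly derived there, the reference is legitimate, and alternatively the same bound follows from the decomposition of Remark~\ref{rem:more-about-psi} as in the paper.
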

\begin{proof}
First, let us note that
\begin{equation}
\label{eq:quotients}
\frac{\varXi_n'}{\alpha_n'} = O\bigl(\omega_r(n)\bigr)\quad\text{and}\quad
\frac{\dot{\varXi}_n}{\alpha_n'} = O\bigl(\omega_r(n)\bigr);
\end{equation}
this follows from Lemma~\ref{lem:basic-estimates}\ref{eq:psi-prime}-\ref{eq:psi-dot} and
Lemma~\ref{lem:alpha-etcetera}. Then,
\begin{align*}
\frac{\varXi_n'}{\psi_n'} - \frac{\dot{\varXi}_n}{\dot{\psi}_n}
	&= \frac{\varXi_n'}{\alpha_n'\left(1+\frac{\varXi_n'}{\alpha_n'}\right)}
		+ \frac{\dot{\varXi}_n}{\alpha_n'\left(1-\frac{\dot{\varXi}_n}{\alpha_n'}\right)}
\\[1mm]
	&= \frac{\dot{\psi}_{1,n}^\text{res} + {\varXi^{(2)\prime}_n}
		+ \dot{\varXi}^{(2)}_n}{\alpha_n'} + O\bigl(\omega_r^2(n)\bigr),
\end{align*}
where we have made use of Remark~\ref{rem:more-about-psi}. Now,
\[
\dot{\psi}_{1,n}^\text{res}
	= - \beta_n \int_0^\infty \psi_0^2(\lambda_n,x)q'(x)dx
		+ \alpha_n \int_0^\infty \psi_0(\lambda_n,x)\theta_0(\lambda_n,x)q'(x)dx
\]
so, by combining Lemma~\ref{lem:about-omega} and
Lemma~\ref{lem:alpha-etcetera}, we obtain
\[
\frac{\dot{\psi}_{1,n}^\text{res}}{\alpha_n'} = O\bigl(n^{-1/3}\omega_r(n)\bigr)
\]
uniformly on bounded sets of $\bsA_r$.
Also, from Remark~\ref{rem:more-about-psi}, we conclude that
\[
\frac{\varXi^{(2)\prime}_n}{\alpha_n'}
		+ \frac{\dot{\varXi}^{(2)}_n}{\alpha_n'} = O\bigl(\omega_r^2(n)\bigr).
\]
Now the stated result follows immediately.
\end{proof}

\begin{lemma}
\label{lem:second-parenthesis}
\[
\frac{\dot{\psi}_n'}{\psi_n'} - \frac{\ddot{\psi}_n}{\dot{\psi}_n}
	= O\bigl(n^{1/3}\omega_r^2(n)\bigr),
\]
uniformly on bounded subsets of $\bsA_r$.
\end{lemma}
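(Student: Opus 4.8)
The plan is to expand each of the two quotients around the corresponding free quantity and to exploit a cancellation of leading terms in the numerator. By Lemma~\ref{lem:basic-estimates}\ref{eq:psi-prime}--\ref{eq:psi-dot} one has $\psi_n' = \alpha_n' + \varXi_n'$ and $\dot\psi_n = -\alpha_n' + \dot\varXi_n$; by Lemma~\ref{lem:even-more-about-psi}, evaluated at $x=0$ and $z=\lambda_n$ (where $\psi_0(\lambda_n,0)=\alpha_n$ and $g_A(-\lambda_n)=1$, since $\lambda_n>0$ for large $n$ by Lemma~\ref{lem:crude-asymp-eigenvalues}), one has $\dot\psi_n' = \lambda_n\alpha_n + \dot\varXi_n'$ and $\ddot\psi_n = -\lambda_n\alpha_n + \ddot\varXi_n$. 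Setting $\epsilon_1 = \varXi_n'/\alpha_n'$ and $\epsilon_2 = \dot\varXi_n/\alpha_n'$, both of order $O(\omega_r(n))$ by \eqref{eq:quotients}, and restricting to $n$ so large that $\alpha_n'$, $\psi_n'$, $\dot\psi_n$ are nonzero, one gets the exact identity
\[
\frac{\dot\psi_n'}{\psi_n'} - \frac{\ddot\psi_n}{\dot\psi_n}
	= \frac{1}{\alpha_n'}\left(\frac{\dot\psi_n'}{1+\epsilon_1}+\frac{\ddot\psi_n}{1-\epsilon_2}\right)
	= \frac{\dot\psi_n' + \ddot\psi_n}{\alpha_n'}
		- \frac{\epsilon_1}{1+\epsilon_1}\,\frac{\dot\psi_n'}{\alpha_n'}
		+ \frac{\epsilon_2}{1-\epsilon_2}\,\frac{\ddot\psi_n}{\alpha_n'},
\]
and the task reduces to estimating the three summands on the right.

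For this I would use the crude localization of Lemma~\ref{lem:crude-asymp-eigenvalues}, which gives, uniformly on bounded subsets of $\bsA_r$, that $\lambda_n$ is of order $n^{2/3}$ (hence $\sigma(\lambda_n)$ of order $n^{1/6}$ and $\lambda_n^{3/4}$ of order $n^{1/2}$), that $\bom(q,\lambda_n) = O(\omega_r(n))$, and that $\norm{q}_1+\norm{q'}_1 = O(1)$; together with Lemma~\ref{lem:alpha-etcetera} ($\alpha_n = O(n^{-1/6}\omega_r(n))$ and $|\alpha_n'|$ of order $n^{1/6}$). Feeding these into \eqref{eq:K} and \eqref{eq:double-dot-Xi} yields $\dot\varXi_n' = O(n^{1/2}\omega_r(n))$ and $\ddot\varXi_n = O(n^{1/2}\omega_r(n))$, and since $\lambda_n\alpha_n = O(n^{1/2}\omega_r(n))$ as well, both $\dot\psi_n'$ and $\ddot\psi_n$ are $O(n^{1/2}\omega_r(n))$; hence the last two summands above are each $O\bigl(\omega_r(n)\cdot n^{1/2}\omega_r(n)\cdot n^{-1/6}\bigr)=O(n^{1/3}\omega_r^2(n))$. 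In the first summand the leading $\pm\lambda_n\alpha_n$ cancel, so $\dot\psi_n'+\ddot\psi_n = \dot\varXi_n'+\ddot\varXi_n$, which \eqref{eq:dot-beta} bounds by $C\bigl[\sigma(\lambda_n)\bom(q,\lambda_n)+(\norm{q}_1+\norm{q'}_1)/\sigma(\lambda_n)\bigr]=O(n^{1/6}\omega_r(n))$; dividing by $\alpha_n'$ makes the first summand $O(\omega_r(n))$.

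Collecting, $\dfrac{\dot\psi_n'}{\psi_n'} - \dfrac{\ddot\psi_n}{\dot\psi_n} = O(\omega_r(n)) + O(n^{1/3}\omega_r^2(n))$, and since $n^{1/3}\omega_r(n)$ equals $\log^{1/2}n$ for $r\in(1,2)$ and $1$ for $r\in[2,\infty)$, we have $\omega_r(n)=O(n^{1/3}\omega_r^2(n))$, so the asserted bound follows, uniformly on bounded subsets of $\bsA_r$ since every step above is uniform there. The one point requiring care — and the main obstacle — is that one must not bound $\dot\psi_n'$ and $\ddot\psi_n$ separately and divide by $\alpha_n'$: that route yields only $O(n^{1/3}\omega_r(n))$, one factor of $\omega_r(n)$ short of the claim. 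The missing decay is supplied precisely by the cancellation recorded in \eqref{eq:dot-beta}, which governs the first summand, the other two already carrying a factor $\omega_r(n)$ from $\epsilon_1,\epsilon_2$.
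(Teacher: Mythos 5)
Your proof is correct and follows essentially the same route as the paper: both expand $\psi_n',\dot\psi_n,\dot\psi_n',\ddot\psi_n$ around the unperturbed quantities, use the crude bounds \eqref{eq:K}, \eqref{eq:double-dot-Xi} and \eqref{eq:quotients} for the terms already carrying a factor $\omega_r(n)$, and rely on the cancellation recorded in \eqref{eq:dot-beta} (i.e.\ Lemma~\ref{lem:even-more-about-psi}) for the term $\dot\psi_n'+\ddot\psi_n$. The only difference is bookkeeping: the paper arranges the quotient into three explicit brackets and sharpens the $\varXi_n'+\dot\varXi_n$ piece to $O(\omega_r^2(n))$ via Remark~\ref{rem:more-about-psi}, a refinement your cruder $O(n^{1/3}\omega_r^2(n))$ estimate correctly shows is not needed for this lemma.
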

\begin{proof}
We have
\begin{multline*}
\frac{\dot{\psi}_n'}{\psi_n'} - \frac{\ddot{\psi}_n}{\dot{\psi}_n}
	= \left[-\frac{\lambda_n\alpha_n}{(\alpha_n')^2}\left(\varXi_n'+\dot{\varXi}_n\right)\right.
\\
	\left.+\frac{1}{\alpha_n'}\left(\dot{\varXi}_n'+\ddot{\varXi}_n\right)
	+\frac{1}{(\alpha_n')^2}\left(\varXi_n'\ddot{\varXi}_n
		-\dot{\varXi}_n\dot{\varXi}_n'\right)\right]
	\left[1 + O\bigl(\omega_r(n)\bigr)\right].
\end{multline*}
Clearly,
\[
\frac{\lambda_n\alpha_n}{(\alpha_n')^2}\left(\varXi_n'+\dot{\varXi}_n\right)
	= \lambda_n\frac{\alpha_n}{\alpha_n'}
		\frac{\dot{\psi}_{1,n}^\text{res}
			+ {\varXi^{(2)\prime}_n} + \dot{\varXi}^{(2)}_n}{\alpha_n'}
	= O\bigl(\omega_r^2(n)\bigr).
\]
Since
\[
\dot{\varXi}_n'+\ddot{\varXi}_n = \dot{\psi}_n' + \ddot{\psi}_n,
\]
Lemma~\ref{lem:even-more-about-psi} implies
\[
\frac{1}{\alpha_n'}\left(\dot{\varXi}_n'+\ddot{\varXi}_n\right)
	= O\bigl(\omega_r(n)\bigr).
\]
Finally, Lemma~\ref{lem:even-more-about-psi} and \eqref{eq:double-dot-Xi} imply
\[
\dot{\varXi}'_n = O\bigl(n^{1/2}\omega_r(n)\bigr)\quad\text{and}\quad
\ddot{\varXi}_n = O\bigl(n^{1/2}\omega_r(n)\bigr),
\]
hence
\[
\frac{1}{(\alpha_n')^2}\left(\varXi_n'\ddot{\varXi}_n
		-\dot{\varXi}_n\dot{\varXi}_n'\right)= O\bigl(n^{1/3}\omega_r^2(n)\bigr).\qedhere
\]
\end{proof}

\begin{lemma}
\label{lem:alphas-and-betas}
Suppose $q\in\bsA_r$. Define $A^0_n(q,x)$ by the rule
\begin{equation}
\label{eq:A-super-0}
(1+x)^r A^0_n(q,x)
	= 2 \frac{\beta_n}{\alpha_n'}
		\psi_0(\lambda_n,x)\psi_0'(\lambda_n,x)
		- \frac{\alpha_n}{\alpha_n'}
		\left[\theta_0'(\lambda_n,x)\psi_0(\lambda_n,x)
			+ \theta_0(\lambda_n,x)\psi_0'(\lambda_n,x)\right].
\end{equation}
Then $A_n^0(q,\cdot)\in\sA_r$ and
\begin{equation*}
\int_0^1\inner{A_n^0(tq,\cdot)}{q}_{\sA_r}dt
	= -2 \pi \frac{\int_0^\infty \ai(x+a_n)\ai'(x+a_n)q(x)dx}{\bigl(\tfrac32\pi n\bigr)^{1/3}}
			+ O\bigl(n^{-2/3}\omega_r(n)\bigr)
\end{equation*}
uniformly on bounded subsets of $\bsA_r$.
\end{lemma}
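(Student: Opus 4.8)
The plan is to exploit the cancellation of weights in $\inner{\cdot}{\cdot}_{\sA_r}$ and then integrate by parts in $x$, moving one derivative off the bounded-but-non-decaying products of Airy functions and onto $q$, where its $\sA_r$-smallness becomes available. First I would check that $A_n^0(q,\cdot)\in\sA_r$: by \eqref{eq:vbe-psi0}, \eqref{eq:vbe-theta0} and the accompanying bounds for $\psi_0',\theta_0'$, together with $g_Ag_B\equiv1$, each of $\psi_0(\lambda_n,x)\psi_0'(\lambda_n,x)$, $\theta_0'(\lambda_n,x)\psi_0(\lambda_n,x)$, $\theta_0(\lambda_n,x)\psi_0'(\lambda_n,x)$ is bounded in $x$; hence $\abs{(1+x)^rA_n^0(q,x)}\le C_n$, so $\abs{A_n^0(q,x)}^2(1+x)^r\le C_n^2(1+x)^{-r}$, which is integrable because $r>1$. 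Since $\inner{f}{g}_{\sA_r}=\int_0^\infty fg\,(1+x)^r\,dx$, the factor $(1+x)^r$ cancels the one implicit in $A_n^0$; writing $\lambda_n=\lambda_n(tq)$, $\alpha_n=\psi_0(\lambda_n,0)$, $\beta_n=\theta_0(\lambda_n,0)$, etc., this gives $\inner{A_n^0(tq,\cdot)}{q}_{\sA_r}=\mathrm I_n(t)+\mathrm{II}_n(t)$, where $\mathrm I_n(t)=2\tfrac{\beta_n}{\alpha_n'}\int_0^\infty\psi_0(\lambda_n,x)\psi_0'(\lambda_n,x)q(x)\,dx$ and $\mathrm{II}_n(t)=-\tfrac{\alpha_n}{\alpha_n'}\int_0^\infty[\theta_0'\psi_0+\theta_0\psi_0'](\lambda_n,x)q(x)\,dx$; using $\psi_0(\lambda_n,x)=\sqrt\pi\,\ai(x-\lambda_n)$, $\theta_0(\lambda_n,x)=\sqrt\pi\,\bi(x-\lambda_n)$, the two integrands are $\tfrac{\pi}{2}\tfrac{d}{dx}\ai^2(x-\lambda_n)$ and $\pi\tfrac{d}{dx}[\ai(x-\lambda_n)\bi(x-\lambda_n)]$.

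For the remainder: since $q\in\bsA_r$ is absolutely continuous with $q(x)\to0$ and $\ai(\cdot-\lambda_n)\bi(\cdot-\lambda_n)$ decays, integration by parts gives
\[
\int_0^\infty[\theta_0'\psi_0+\theta_0\psi_0'](\lambda_n,x)q(x)\,dx=-\alpha_n\beta_n\,q(0)-\pi\int_0^\infty\ai(x-\lambda_n)\bi(x-\lambda_n)q'(x)\,dx.
\]
By Lemma~\ref{lem:alpha-etcetera}, $\alpha_n\beta_n=O(n^{-1/3}\omega_r(n))$, and $\abs{q(0)}$ is bounded on bounded subsets of $\bsA_r$; moreover $\abs{\ai(w)\bi(w)}\le C\sigma(w)^{-2}\le C(1+\abs{w})^{-1/2}$, so a Cauchy--Schwarz estimate against $(1+x)^{r/2}$, together with the elementary bound $\int_0^\infty(1+\abs{x-\lambda_n})^{-1}(1+x)^{-r}\,dx=O(\lambda_n^{-1})$ valid for $r>1$ (the bulk coming from $x$ near $0$), yields $\abs{\int_0^\infty\ai\bi\,q'}=O(n^{-1/3})\norm{q'}_{\sA_r}$. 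As $\tfrac{\alpha_n}{\alpha_n'}=O(n^{-1/3}\omega_r(n))$ as well (Lemma~\ref{lem:alpha-etcetera}), $\mathrm{II}_n(t)=O(n^{-2/3}\omega_r(n))$, uniformly for $t\in[0,1]$ and on bounded subsets of $\bsA_r$.

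For the main term: integration by parts gives $\int_0^\infty\psi_0(\lambda_n,x)\psi_0'(\lambda_n,x)q(x)\,dx=-\tfrac12\alpha_n^2\,q(0)-\tfrac{\pi}{2}\int_0^\infty\ai^2(x-\lambda_n)q'(x)\,dx$. Since $2\tfrac{\beta_n}{\alpha_n'}=O(n^{-1/3})$ and $\alpha_n^2=O(n^{-1/3}\omega_r^2(n))$, the first summand contributes $O(n^{-2/3}\omega_r(n))$. In the second I pass from $\lambda_n(tq)$ to $-a_n$: the mean value theorem and $\abs{\ai(w)\ai'(w)}\le C$ give
\[
\Bigl|\int_0^\infty[\ai^2(x-\lambda_n(tq))-\ai^2(x+a_n)]q'(x)\,dx\Bigr|\le C\abs{\lambda_n(tq)+a_n}\norm{q'}_1=O(n^{-1/3}\omega_r(n)),
\]
using \eqref{eq:note} and $\norm{q'}_1\le(r-1)^{-1/2}\norm{q'}_{\sA_r}$; multiplied by $2\tfrac{\beta_n}{\alpha_n'}$ this is again $O(n^{-2/3}\omega_r(n))$. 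Thus $\mathrm I_n(t)=-\tfrac{\pi\beta_n}{\alpha_n'}\int_0^\infty\ai^2(x+a_n)q'(x)\,dx+O(n^{-2/3}\omega_r(n))$. The Wronskian identity $\psi_0\theta_0'-\psi_0'\theta_0\equiv1$ at $x=0$ reads $\alpha_n\beta_n'-\alpha_n'\beta_n=1$, and $\alpha_n\beta_n'=O(\omega_r^2(n))$ by Lemma~\ref{lem:alpha-etcetera}, so $\alpha_n'\beta_n=-1+O(\omega_r^2(n))$; since $(\alpha_n')^2=(\tfrac32\pi n)^{1/3}[1+O(\omega_r^2(n))]$, we obtain $\tfrac{\beta_n}{\alpha_n'}=(\tfrac32\pi n)^{-1/3}[-1+O(\omega_r^2(n))]$. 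Undoing the integration by parts via $\ai(a_n)=0$ gives $-\tfrac12\int_0^\infty\ai^2(x+a_n)q'(x)\,dx=\int_0^\infty\ai(x+a_n)\ai'(x+a_n)q(x)\,dx$, which is itself $O(n^{-1/3})$ by the same Cauchy--Schwarz bound, so the factor $1+O(\omega_r^2(n))$ is absorbed into the error. Collecting these pieces, $\mathrm I_n(t)$ reduces to the main term of the statement up to $O(n^{-2/3}\omega_r(n))$; as $\mathrm I_n(t)+\mathrm{II}_n(t)$ is then $t$-independent modulo that error, integrating over $t\in[0,1]$ completes the argument.

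The crux is precision: the main term, together with several a-priori-larger contributions, is of size only $O(n^{-1/3}\omega_r(n))$. Estimating $\int_0^\infty[\ai(x-\lambda_n)\ai'(x-\lambda_n)-\ai(x+a_n)\ai'(x+a_n)]q(x)\,dx$ by the mean value theorem \emph{before} integrating by parts introduces a factor $\sigma(x-s_n(x))^2$ in the integrand, which after Cauchy--Schwarz brings in an extra $\lambda_n^{1/2}\sim n^{1/3}$ and wrecks the bound; likewise, controlling $\int\ai\bi\,q'$ through $\om(q',\lambda_n)$ and Lemma~\ref{lem:about-omega} costs a spurious logarithm for $r\in(1,2)$. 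Both are circumvented by first integrating by parts — trading the bounded, non-decaying products $\ai\ai'$, $\ai'\bi$, $\ai\bi'$ for the genuinely decaying $\ai^2$ and using $\ai(a_n)=0$ together with the smallness of $\ai(-\lambda_n)=\alpha_n/\sqrt\pi$ — and by replacing Lemma~\ref{lem:about-omega}, at the one place it would hurt, by the sharper input $\int_0^\infty(1+\abs{x-\lambda_n})^{-1}(1+x)^{-r}\,dx=O(\lambda_n^{-1})$, which holds for every $r>1$.
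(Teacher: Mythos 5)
Your overall route is the paper's: cancel the weight in $\inner{\cdot}{\cdot}_{\sA_r}$, integrate by parts in $x$ so that the non-decaying products $\psi_0\psi_0'$ and $(\theta_0\psi_0)'$ are traded for the decaying $\psi_0^2$ and $\theta_0\psi_0$ paired with $q'$, invoke Lemma~\ref{lem:alpha-etcetera} for $\alpha_n,\alpha_n',\beta_n$, and pass from $\lambda_n(tq)$ to $-a_n$ by the mean value theorem together with \eqref{eq:note}. Two genuine differences are worth recording. First, you estimate the two boundary terms at $x=0$ separately; the paper's combination \eqref{eq:A-super-0} is rigged so that they cancel identically (the $\tfrac{\beta_n}{\alpha_n'}\alpha_n^2 q(0)$ and $\tfrac{\alpha_n}{\alpha_n'}\alpha_n\beta_n q(0)$ contributions are equal and opposite, equivalently $A_n^0=-(s_0\psi_0)'/\alpha_n'$ with $s_0(\lambda_n,0)=0$), so \eqref{eq:integration-by-parts} has no boundary terms at all; your separate estimates are correct but slightly wasteful. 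Second, you bound $\int\theta_0\psi_0\,q'$ by $O(n^{-1/3})\norm{q'}_{\sA_r}$ using the elementary bound $\int_0^\infty(1+\abs{x-\lambda_n})^{-1}(1+x)^{-r}dx=O(\lambda_n^{-1})$, instead of the paper's appeal to Lemma~\ref{lem:about-omega}, which only gives $O(\bom(q,\lambda_n))$ and hence $O(n^{-1/3}\omega_r^2(n))$ for that piece --- weaker than the lemma's claimed $O(n^{-2/3}\omega_r(n))$ by $\log^{1/2}n$ when $r\in(1,2)$ (harmless for Theorem~\ref{thm:norming-constants}, whose error is $O(\omega_r^3(n))$, but your sharper input is what actually delivers the error as stated). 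One small repair: $\ai(w)\ai'(w)$ has no pointwise decay in the oscillatory region, so the claim that $\int_0^\infty\ai(x+a_n)\ai'(x+a_n)q\,dx=O(n^{-1/3})$ must be justified through the $\ai^2$-against-$q'$ form (which you have), not by Cauchy--Schwarz applied directly to $\ai\ai'$ against $q$.

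The one substantive objection is your final sentence. Your own displays give
\begin{equation*}
\mathrm I_n(t)
  = 2\pi\,\frac{\beta_n}{\alpha_n'}\int_0^\infty\ai(x+a_n)\ai'(x+a_n)q(x)\,dx
    + O\bigl(n^{-2/3}\omega_r(n)\bigr)
  = -\,2\pi\,\frac{\int_0^\infty\ai(x+a_n)\ai'(x+a_n)q(x)\,dx}{\bigl(\tfrac32\pi n\bigr)^{1/3}}
    + O\bigl(n^{-2/3}\omega_r(n)\bigr),
\end{equation*}
i.e.\ the main term carries a minus sign, opposite to the lemma as stated; yet you assert agreement with the statement without comment. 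In fact your sign is the correct one: it is the sign used when the lemma is invoked in the proof of Theorem~\ref{thm:norming-constants} and the one appearing in the abstract, while the paper's own proof of the lemma contains a compensating slip (the identity after \eqref{eq:integration-by-parts} should read $\int\psi_0^2(\lambda_n,x)q'\,dx=-2\int\psi_0(-a_n,x)\psi_0'(-a_n,x)q\,dx+\dots$, since $\ai(a_n)=0$ kills the boundary term), so the stated $+2\pi$ is a typo. You should state explicitly that your computation yields the opposite sign to the statement (and that this is the sign consistent with the rest of the paper), rather than declaring that the pieces ``reduce to the main term of the statement''; as written, that last step contradicts your own formulas.
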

\begin{proof}
Let us abbreviate $\lambda_n \defeq \lambda_n(tq)$. An integration by parts yields\footnote{Recall
that $q\in\bsA_r$ implies $q$ bounded.}
\begin{equation}
\label{eq:integration-by-parts}
\inner{A_n^0(tq,\cdot)}{q}_{\sA_r}
=	- \frac{\beta_n}{\alpha_n'}\int_0^\infty \psi_0^2(\lambda_n,x) q'(x) dx
	+ \frac{\alpha_n}{\alpha_n'}\int_0^\infty \theta_0(\lambda_n,x)\psi_0(\lambda_n,x) q'(x) dx,
\end{equation}
where, because of Lemma~\ref{lem:alpha-etcetera},
\begin{equation*}
\frac{\beta_n}{\alpha_n'}
	= - \bigl(\tfrac32\pi n\bigr)^{-1/3}\left[1 + O\bigl(\omega_r^2(n)\bigr)\right],
\quad
\frac{\alpha_n}{\alpha_n'}
	= O\bigl(n^{-1/3}\omega_r(n)\bigr),
\end{equation*}
uniformly on bounded subsets of ${\bsA}_r$.
Resorting to Lemma~\ref{lem:g_Ag_C} and Lemma~\ref{lem:about-omega}, we obtain
\[
\abs{\int_0^\infty \psi_0^2(\lambda_n,x) q'(x) dx}
\le C \bom(q,\lambda_n),
\]
and a similar bound holds for the second integral in \eqref{eq:integration-by-parts}.
Therefore,
\[
\frac{\beta_n}{\alpha_n'}\int_0^\infty \psi_0^2(\lambda_n,x) q'(x) dx
	= O\bigl(n^{-1/3}\omega_r(n)\bigr)
\]
and
\[
\frac{\alpha_n}{\alpha_n'}\int_0^\infty \theta_0(\lambda_n,x)\psi_0(\lambda_n,x) q'(x) dx
	= O\bigl(n^{-1/3}\omega_r^2(n)\bigr).
\]
On the other hand,
\begin{multline*}
\int_0^\infty \psi_0^2(\lambda_n,x) q'(x) dx
\\
	= - 2\int_0^\infty \psi_0(-a_n,x)\psi_0'(-a_n,x) q(x) dx
		+ \int_0^\infty \left[\psi_0^2(\lambda_n,x) - \psi_0^2(-a_n,x)\right] q'(x) dx.
\end{multline*}
An argument like in the proof of Theorem~\ref{thm:eigenvalues} (see the exposition after
\eqref{eq:mean-value}) implies
\[
\frac{\beta_n}{\alpha_n'}\int_0^\infty
	\left[\psi_0^2(\lambda_n,x) - \psi_0^2(-a_n,x)\right] q'(x) dx
		= O\bigl(n^{-2/3}\omega_r(n)\bigr),
\]
hence concluding the proof.
\end{proof}

\begin{lemma}
\label{lem:Lambda-one-and-two}
Define
\begin{equation}
\label{eq:Lambda_1}
\varLambda_1(q,z,x)
	\defeq c_0(z,x)\varXi(q,z,x) + \psi_0(z,x)\varUpsilon_c(q,z,x)
		+ \varXi(q,z,x)\varUpsilon_c(q,z,x),
\end{equation}
and
\begin{multline*}
\varLambda_2(q,z,x)
	\defeq \dot{s}_0(z,x)\varXi(q,z,x) + s_0(z,x)\dot{\varXi}(q,z,x)
		+ \psi_0(z,x)\dot{\varUpsilon}_s(q,z,x)
\\[1mm]
		+ \dot{\psi}_0(z,x)\varUpsilon_s(q,z,x)
		+ \varUpsilon_s(q,z,x)\dot{\varXi}(q,z,x) + \dot{\varUpsilon}_s(z,x)\varXi(q,z,x).
\end{multline*}
Then,
\[
\inner{\varLambda_1(q,\lambda_n(q),\cdot)}{v} = O\bigl(n^{1/6}\omega_r^2(n)\bigr),
\]
uniformly on bounded subsets of $\bsA_r\times\sA_r$. Also,
\[
\inner{\varLambda_2(q,\lambda_n(q),\cdot)}{v} = O\bigl(n^{1/6}\omega_r^2(n)\bigr),
\]
uniformly on bounded subsets of $\bsA_r\times\sA_r$.
\end{lemma}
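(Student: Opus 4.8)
The plan is to estimate $\inner{\varLambda_i(q,\lambda_n(q),\cdot)}{v}=\int_0^\infty \varLambda_i(q,\lambda_n(q),x)\,v(x)\,dx$ (the $L^2(\R_+)$ pairing) by bounding the integrand summand by summand. Write $\lambda_n=\lambda_n(q)$ throughout. First I would record the uniform consequences of Lemma~\ref{lem:crude-asymp-eigenvalues}: since $\lambda_n=-a_n+O(n^{-2/3+\epsilon})$ uniformly on bounded subsets, for all large $n$ (uniformly over $q$ in a fixed bounded set) one has $\sigma(\lambda_n)=1+\lambda_n^{1/4}\le C n^{1/6}$, while $g_A(-\lambda_n)=g_B(-\lambda_n)=1$ because $(-\lambda_n)^{3/2}$ is purely imaginary; moreover, by Lemma~\ref{lem:about-omega} and the asymptotics of $\lambda_n$, $\om(q,\lambda_n)\le C\norm{q}_{\sA_r}\,\omega_r(n)$, $\bom(q,\lambda_n)\le C\norm{q}_{\bsA_r}\,\omega_r(n)$ and $\om(v,\lambda_n)\le C\norm{v}_{\sA_r}\,\omega_r(n)$. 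The finitely many $n$ with $\lambda_n\le 0$ are harmless, since then the integral is finite (as $v\in L^1(\R_+)$) and bounded on bounded subsets.

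The structural observation is that each summand of $\varLambda_1$ in \eqref{eq:Lambda_1} and of $\varLambda_2$ is a product of a ``$g_A$-type'' factor (one of $\psi_0$, $\psi_0'$, $\varXi$, $\dot\varXi$) and a ``$g_B$-type'' factor (one of $c_0$, $s_0$, $s_0'$, $\varUpsilon_c$, $\varUpsilon_s$, $\dot\varUpsilon_s$). Inserting the pointwise bounds of Section~\ref{sec:prelim} together with Lemma~\ref{lem:basic-estimates}, Lemma~\ref{lem:about-s} and Lemma~\ref{lem:about-c}, using $g_A(x-\lambda_n)g_B(x-\lambda_n)\equiv 1$ to cancel the exponential parts and $e^{C\om(q,\lambda_n)}\le C$, each such product at $z=\lambda_n$ is bounded by $C P_n$ times one of two $x$-profiles: either $\sigma(\lambda_n)\,\sigma(x-\lambda_n)^{-2}$, or $\sigma(\lambda_n)^{-1}$, the latter being constant in $x$. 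Here $P_n$ denotes a product of one or two of $\om(q,\lambda_n)$ and $\bom(q,\lambda_n)$, hence $P_n=O(\omega_r(n))$ or $O(\omega_r^2(n))$ uniformly on bounded subsets. In the first case I would integrate against $v$ using $\sigma(x-\lambda_n)^{-2}\le(1+\abs{x-\lambda_n})^{-1/2}$, obtaining $C P_n\,\sigma(\lambda_n)\,\om(v,\lambda_n)$; in the second, simply $C P_n\,\sigma(\lambda_n)^{-1}\,\norm{v}_1$ with $\norm{v}_1\le(r-1)^{-1/2}\norm{v}_{\sA_r}$.

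Carrying this out, for $\varLambda_1$ the terms $c_0\varXi$ and $\psi_0\varUpsilon_c$ are of the first profile with $P_n=O(\om(q,\lambda_n))$ and contribute $O\bigl(\om(q,\lambda_n)\,\sigma(\lambda_n)\,\om(v,\lambda_n)\bigr)=O\bigl(n^{1/6}\omega_r^2(n)\bigr)$, while $\varXi\varUpsilon_c$ has $P_n=O(\om^2(q,\lambda_n))$ and contributes only $O\bigl(n^{1/6}\omega_r^3(n)\bigr)$; this settles the first assertion on bounded subsets of $\sA_r\times\sA_r$. For $\varLambda_2$, write $\dot s_0=c_0-s_0'$ and $\dot\psi_0=-\psi_0'$. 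The term $c_0\varXi$ arising from $\dot s_0\varXi$ is, again, of the first profile and yields the claimed $O\bigl(n^{1/6}\omega_r^2(n)\bigr)$; every one of the remaining summands $s_0'\varXi$, $s_0\dot\varXi$, $\psi_0\dot\varUpsilon_s$, $\psi_0'\varUpsilon_s$, $\varUpsilon_s\dot\varXi$, $\dot\varUpsilon_s\varXi$ is of the constant-in-$x$ profile and contributes at most $O\bigl(P_n\,\sigma(\lambda_n)^{-1}\,\norm{v}_1\bigr)=O\bigl(n^{-1/6}\omega_r(n)\bigr)$, which is $O\bigl(n^{1/6}\omega_r^2(n)\bigr)$ since $n^{-1/3}=O(\omega_r(n))$. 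In those remaining terms one must use the refined bound \eqref{eq:bound-psi-dot} of Lemma~\ref{lem:dot-psi-refined} for $\dot\varXi$ and Lemma~\ref{lem:about-s}\ref{it:s-dot} for $\dot\varUpsilon_s$, which is precisely why the hypothesis $q\in\bsA_r$ enters the second assertion.

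The main obstacle is not any single estimate but making the exponents match: at first sight the constant-in-$x$ terms look like $O(n^{-1/6})$ (for instance $s_0\dot\varXi$ with the crude bound \eqref{eq:bound-psi-dot-crude}, which supplies only a non-decaying factor $\norm{q}_1$), and $O(n^{-1/6})$ is not $O(n^{1/6}\omega_r^2(n))$. The needed gain comes exactly from the $\bsA_r$-refinements of Section~\ref{sec:estimates}, which upgrade $\norm{q}_1$ to $\bom(q,\lambda_n)=O(\omega_r(n))$. Once this is noticed, the remainder is a mechanical insertion of the pointwise bounds and of Lemma~\ref{lem:about-omega}, with uniformity on bounded subsets inherited from the boundedness there of $e^{C\om(q,\lambda_n)}\le e^{C\norm{q}_{\sA_r}\omega_r(n)}$ and of $\norm{q}$, $\norm{v}$.
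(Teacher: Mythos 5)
Your proposal is correct and follows essentially the paper's own route: pointwise bounds from Sections~\ref{sec:prelim} and \ref{sec:estimates} with the $g_Ag_B\equiv 1$ cancellation, the estimate $\int\abs{v}\,\sigma(x-\lambda_n)^{-2}dx\le\om(v,\lambda_n)$, and Lemma~\ref{lem:about-omega} together with Lemma~\ref{lem:crude-asymp-eigenvalues} to convert $\om$, $\bom$ into $\omega_r(n)$, including the correct identification that the refined bounds \eqref{eq:bound-psi-dot} and Lemma~\ref{lem:about-s}\ref{it:s-dot} (hence $q\in\bsA_r$) are what rescue the $\dot{\varXi}$ and $\dot{\varUpsilon}_s$ terms. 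The only cosmetic deviation is that you bound $c_0(\lambda_n,x)$ directly by $C\sigma(\lambda_n)g_B(x-\lambda_n)/\sigma(x-\lambda_n)$ instead of splitting it as $\beta_n'\psi_0-\alpha_n'\theta_0$ and invoking Lemma~\ref{lem:alpha-etcetera}, and you write out the ``analogous'' $\varLambda_2$ estimates the paper omits; both give the same $n^{1/6}\omega_r^2(n)$ bound.
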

\begin{proof}
As before, we write $\lambda_n \defeq \lambda_n(q)$.
$q\in\bsA_r$ and $v\in\sA_r$,
and consider the first term in \eqref{eq:Lambda_1}. We have
\[
\inner{c_0(\lambda_n,\cdot)\varXi(q,\lambda_n,\cdot)}{v}
	= \beta_n'\inner{\psi_0(\lambda_n,\cdot)\varXi(q,\lambda_n,\cdot)}{v}
		- \alpha_n'\inner{\theta_0(\lambda_n,\cdot)\varXi(q,\lambda_n,\cdot)}{v}.
\]
Clearly,
\begin{align*}
\abs{\inner{\psi_0(\lambda_n,\cdot)\varXi(q,\lambda_n,\cdot)}{v}}
	&\le \int_0^\infty\abs{\psi_0(\lambda_n,x)}\abs{\varXi(q,\lambda_n,x)}\abs{v(x)}dx
\\
	&\le C\om(q,\lambda_n) \int_0^\infty
		\frac{\abs{v(x)}}{\sigma(x-\lambda_n)^2}dx
	 = C \om(q,\lambda_n)\om(v,\lambda_n)
\end{align*}
and, similarly,
\[
\abs{\inner{\theta_0(\lambda_n,\cdot)\varXi(q,\lambda_n,\cdot)}{v}}
	\le C \om(q,\lambda_n)\om(v,\lambda_n).
\]
Therefore, in view of Lemma~\ref{lem:alpha-etcetera},
\begin{equation*}
\inner{c_0(\lambda_n,\cdot)\varXi(q,\lambda_n,\cdot)}{v}
	= O\bigl(n^{1/6}\omega_r^2(n)\bigr)
\end{equation*}
uniformly on bounded subsets of $\bsA_r\times\sA_r$. Next, we note that
\begin{equation}
\label{eq:saving-ineq}
g_A(x-\lambda_n)\ch(\lambda_n,x)
	= g_B(-\lambda_n) g_A^2(x-\lambda_n) + g_A(-\lambda_n).
\end{equation}
Then, resorting to Lemma~\ref{lem:stupid-lemma} and noticing that $g_A(\lambda)\le 1$
on the whole real line, we obtain
\begin{equation*}
\abs{\inner{\psi_0(\lambda_n,\cdot)\varUpsilon_c(q,\lambda_n,\cdot)}{v}}
	\le C \sigma(\lambda_n)\om(q,\lambda_n)\om(v,\lambda_n).
\end{equation*}
Thus,
\[
\inner{\psi_0(\lambda_n,\cdot)\varUpsilon_c(q,\lambda_n,\cdot)}{v}
	= O\bigl(n^{1/6}\omega_r^2(n)\bigr)
\]
uniformly on bounded subsets of $\bsA_r\times\sA_r$.
Finally, using similar arguments,
\begin{equation*}
\abs{\inner{\varXi(q,\lambda_n,\cdot)\varUpsilon_c(q,\lambda_n,\cdot)}{v}}
	\le C \sigma(\lambda_n)\om^2(q,\lambda_n)\om(v,\lambda_n)
\end{equation*}
so
\[
\inner{\varXi(q,\lambda_n,\cdot)\varUpsilon_c(q,\lambda_n,\cdot)}{v}
	= O\bigl(n^{1/6}\omega_r^3(n)\bigr).
\]
The proof of the statement concerning $\varLambda_1$ is now complete.

Let us look at the first term in $\varLambda_2$.
Recalling \eqref{eq:saving-ineq} and Lemma~\ref{lem:stupid-lemma}, we obtain
\begin{equation*}
\abs{\inner{\dot{s}_0(z,\cdot)\varXi(q,z,\cdot)}{v}}
	\le C \norm{q}_{\bsA_r}\norm{v}_{\sA_r}\left(\frac{\om_r(n)}{\sigma(\lambda_n)}
	+ \sigma(\lambda_n)\om_r^2(n)\right),
\end{equation*}
where we also use that $\norm{v}_1\le (r-1)^{-1/2}\norm{v}_{\sA_r}$. But since
$n^{-1/6}\om_r(n)\le n^{1/6}\om_r^2(n)$, it follows that
\[
\inner{\dot{s}_0(z,\cdot)\varXi(q,z,\cdot)}{v} = O\bigl(n^{1/6}\omega_r^2(n)\bigr),
\]
uniformly on bounded subsets of $\bsA_r\times\sA_r$.

The third and last terms in $\varLambda_2$ are dealt with in essentially the same fashion
as above. The second, fourth and fifth terms are similar to those in $\varLambda_1$.
\end{proof}

\begin{lemma}
\label{lem:last-piece}
Assume $q\in\bsA_r$. Define
\[
\varLambda_n(q,x)
	= \left[\frac{\varLambda_1(q,\lambda_n(q),x)}{\psi_n'}
		+ \frac{\varLambda_2(q,\lambda_n(q),x)}{\dot{\psi}_n}\right](1+x)^{-r}.
\]
Then, $\varLambda_n(q,\cdot)\in\sA_r$ and
\[
\inner{\varLambda_n(q,\cdot)}{v}_{\sA_r} = O\bigl(n^{-1/3}\omega_r^2(n)\bigr)
\]
uniformly for $q$ and $v$ in bounded subsets of $\bsA_r$.
\end{lemma}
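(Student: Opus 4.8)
Here is how I would approach it.

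\medskip

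Strip off the weight first: since the factor $(1+x)^r$ in $\inner{\cdot}{\cdot}_{\sA_r}$ cancels the $(1+x)^{-r}$ built into $\varLambda_n$, one has
\[
\inner{\varLambda_n(q,\cdot)}{v}_{\sA_r}
	= \frac{1}{\psi_n'}\inner{\varLambda_1(q,\lambda_n,\cdot)}{v}
		+ \frac{1}{\dot\psi_n}\inner{\varLambda_2(q,\lambda_n,\cdot)}{v},
\]
with $\inner{\cdot}{\cdot}$ the plain $L^2(\R_+)$ pairing. That $\varLambda_n(q,\cdot)\in\sA_r$ and that these pairings are finite follows from Remark~\ref{rem:useful-bounds} and the unperturbed bounds of Section~\ref{sec:prelim}: in every product occurring in $\varLambda_1,\varLambda_2$ the exponential factors $g_A$, $g_B$ occur paired, so the bracket $\psi_n'^{-1}\varLambda_1+\dot\psi_n^{-1}\varLambda_2$ is a bounded function of $x$, and $(1+x)^{-r}\in L^1(\R_+)$ for $r>1$. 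The plan rests on the fact that $\psi_n'$ and $-\dot\psi_n$ nearly coincide. Writing $1/\psi_n'=P+S$ and $1/\dot\psi_n=-P+S$ with $P=\tfrac12(1/\psi_n'-1/\dot\psi_n)$, $S=\tfrac12(1/\psi_n'+1/\dot\psi_n)$, we get
\[
\inner{\varLambda_n(q,\cdot)}{v}_{\sA_r}
	= P\,\inner{\varLambda_1-\varLambda_2}{v}+S\,\inner{\varLambda_1+\varLambda_2}{v}
\]
(all functions evaluated at $(q,\lambda_n(q),\cdot)$). By Lemma~\ref{lem:alpha-etcetera}, Lemma~\ref{lem:basic-estimates} and Lemma~\ref{lem:denominator}, $\psi_n'=\alpha_n'[1+O(\omega_r(n))]$, $\dot\psi_n=-\alpha_n'[1+O(\omega_r(n))]$ and $\psi_n'\dot\psi_n=-\norm{\psi(q,\lambda_n,\cdot)}_2^2=-(\tfrac32\pi n)^{1/3}[1+O(\omega_r(n))]$, so $P=O(n^{-1/6})$; and $\psi_n'+\dot\psi_n=\varXi_n'+\dot\varXi_n$ is bounded by $C\bom(q,\lambda_n)\sigma(\lambda_n)^{-1}=O(n^{-1/6}\omega_r(n))$ by the estimate for $\abs{\psi'(q,z,0)+\dot\psi(q,z,0)}$ obtained inside the proof of Lemma~\ref{lem:dot-psi-refined}, whence $S=O(n^{-1/2}\omega_r(n))$. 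The $S$--term is then harmless: by Lemma~\ref{lem:Lambda-one-and-two}, $\inner{\varLambda_1+\varLambda_2}{v}=O(n^{1/6}\omega_r^2(n))$, so $S\inner{\varLambda_1+\varLambda_2}{v}=O(n^{-1/3}\omega_r^3(n))=O(n^{-1/3}\omega_r^2(n))$. Everything thus reduces to proving $\inner{\varLambda_1-\varLambda_2}{v}=O(n^{-1/6}\omega_r^2(n))$ — a full power $n^{-1/3}$ better than Lemma~\ref{lem:Lambda-one-and-two} gives for $\varLambda_1,\varLambda_2$ separately; this extra smallness comes from a cancellation, which is the heart of the matter.

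\medskip

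For the difference I would group the terms of $\varLambda_1$ and $\varLambda_2$ so that the growing factors annihilate one another. Using $\psi_0+\varXi=\psi(q,\lambda_n,\cdot)$ and the identity $c_0-\dot s_0=s_0'$ one obtains
\[
\varLambda_1-\varLambda_2
	= s_0'\varXi + \psi(q,\lambda_n,\cdot)(\varUpsilon_c-\dot\varUpsilon_s)
		- s_0\dot\varXi - \dot\psi_0\varUpsilon_s - \varUpsilon_s\dot\varXi,
\]
and the point is that the factor $s_0'(\lambda_n,\cdot)=-\beta_n\psi_0'(\lambda_n,\cdot)+\alpha_n\theta_0'(\lambda_n,\cdot)$, together with $\varUpsilon_c-\dot\varUpsilon_s=s_1'-\dot s_1^{\text{res}}+\varUpsilon_c^{(2)}-\dot\varUpsilon_s^{(2)}$ (Remarks~\ref{rem:more-about-psi} and~\ref{rem:even-more-about-s}), is $O(n^{-1/3})$ smaller than the growing quantities $c_0$ and $\varUpsilon_c$ it has replaced: indeed $\beta_n,\alpha_n$ are small by Lemma~\ref{lem:alpha-etcetera}, and $s_1'$ carries the factor $\sigma(\lambda_n)^{-1}$ (Lemma~\ref{lem:about-s}) where $\varUpsilon_c$ carried $\sigma(\lambda_n)$.

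\medskip

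I would then split each summand into its first--order part (linear in $q$ and $q'$, via $\varXi=\psi_1+\varXi^{(2)}$, $\dot\varXi=-\psi_1'+\dot\psi_1^{\text{res}}+\dot\varXi^{(2)}$, $\varUpsilon_s=s_1+\varUpsilon_s^{(2)}$, $\dot\varUpsilon_s=c_1-s_1'+\dot s_1^{\text{res}}+\dot\varUpsilon_s^{(2)}$, $\varUpsilon_c=c_1+\varUpsilon_c^{(2)}$) plus a higher--order remainder. The remainder is estimated directly from the pointwise bounds of Remark~\ref{rem:useful-bounds}, Lemmas~\ref{lem:dot-psi-refined}, \ref{lem:even-more-about-psi}, \ref{lem:about-s}, with every $\om(\cdot,\lambda_n)$, $\bom(\cdot,\lambda_n)$ controlled by $O(\omega_r(n))$ through Lemma~\ref{lem:about-omega}; after multiplication by $P=O(n^{-1/6})$ these land at $O(n^{-1/3}\omega_r^2(n))$ (or smaller). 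The first--order part works out to
\[
s_0'\psi_1+\psi_0 s_1'+s_0\psi_1'+\psi_0's_1-\psi_0\dot s_1^{\text{res}}-s_0\dot\psi_1^{\text{res}}
	=\bigl[s_0\psi_1+\psi_0 s_1\bigr]'-\psi_0\dot s_1^{\text{res}}-s_0\dot\psi_1^{\text{res}},
\]
using $s_0'\psi_1+s_0\psi_1'=(s_0\psi_1)'$ and $\psi_0 s_1'+\psi_0's_1=(\psi_0 s_1)'$. The $\dot\psi_1^{\text{res}}$-- and $\dot s_1^{\text{res}}$--terms carry $q'$, hence are estimated by $\om(q',\lambda_n)=O(\omega_r(n))$ and give $O(n^{-1/6}\omega_r^2(n))$ after pairing with $v$; the total--derivative term is integrated by parts against $v$ — legitimate because $s_0(\lambda_n,0)=s_1(q,\lambda_n,0)=0$ kills the boundary term at $x=0$ and exponential decay kills it at $+\infty$ — producing $\inner{s_0\psi_1+\psi_0 s_1}{v'}$, which, since $v\in\bsA_r$, supplies the extra factor $\om(v',\lambda_n)=O(\omega_r(n))$ and again yields $O(n^{-1/6}\omega_r^2(n))$. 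Collecting, $\inner{\varLambda_1-\varLambda_2}{v}=O(n^{-1/6}\omega_r^2(n))$, hence $P\inner{\varLambda_1-\varLambda_2}{v}=O(n^{-1/3}\omega_r^2(n))$; together with the $S$--term this is the claim. Uniformity on bounded subsets of $\bsA_r$ is automatic, since every constant appearing depends only on $\norm{q}_{\bsA_r}$ and $\norm{v}_{\bsA_r}$.

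\medskip

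The main obstacle is the bookkeeping of the previous two paragraphs: recognizing which pairs of terms in $\varLambda_1$ and $\varLambda_2$ must be combined so that the growing factors ($c_0$, $\theta_0$, $\varUpsilon_c$, $\dot\varXi$) are replaced by the $n^{-1/3}$--smaller quantities $s_0'$, $\varUpsilon_c-\dot\varUpsilon_s$, etc.; and then, among the surviving first--order terms, spotting exactly those in which $g_A$ and $g_B$ cancel completely (leaving no $\sigma(x-\lambda_n)^{-2}$ to pair against $v$), so that the total--derivative rearrangement and the integration by parts against $v'$ — using the vanishing of $s_0$, $s$, $\varUpsilon_s$, $\varUpsilon_s'$, $\varUpsilon_c$, $\varUpsilon_c'$ at $x=0$ — become indispensable to recover the last power of $\omega_r$.
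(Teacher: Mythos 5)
Your argument is correct and follows essentially the same route as the paper: you reduce everything to the difference $\varLambda_1-\varLambda_2$ weighted by an $O(n^{-1/6})$ factor (your symmetric/antisymmetric split of $1/\psi_n'$, $1/\dot\psi_n$ is just a cosmetic variant of the paper's factoring out $1/\alpha_n'$ and bounding the correction via \eqref{eq:quotients} and Lemma~\ref{lem:Lambda-one-and-two}), and then exploit the same cancellations — $c_0-\dot s_0=s_0'$, the cancellation of $c_1$ in $\varUpsilon_c-\dot\varUpsilon_s$, the first-order extraction via Remarks~\ref{rem:more-about-psi} and~\ref{rem:even-more-about-s}, and the total-derivative term $(s_0\psi_1+\psi_0 s_1)'$ integrated by parts against $v'$ using $v\in\bsA_r$ — exactly as in the paper's proof.
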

\begin{proof}
Let us abbreviate $\varLambda_1(x) = \varLambda_1(q,\lambda_n,x)$,
$\varLambda_2(x) = \varLambda_2(q,\lambda_n,x)$ and so on.
A computation\footnote{It is perhaps worth mentioning the identities
\[
\frac{1}{\psi_n'} = \frac{1}{\alpha_n'}\left(1 - \frac{\varXi_n'}{\psi_n'}\right)
\quad\text{and}\quad
\frac{1}{\dot{\psi}_n} = -\frac{1}{\alpha_n'}\left(1 - \frac{\dot{\varXi}_n}{\dot{\psi}_n}\right).
\]
}
yields
\begin{equation}
\label{eq:super-mess}
\frac{\varLambda_1(x)}{\psi_n'}
		+ \frac{\varLambda_2(x)}{\dot{\psi}_n}
		= \frac{\varLambda_1(x) - \varLambda_2(x)}{\alpha_n'}
		+ \left[\frac{\dot{\varXi}_n}{\alpha_n'\dot{\psi}_n}\varLambda_2(x)
				- \frac{\varXi'_n}{\alpha_n'\psi_n'}\varLambda_1(x)\right].
\end{equation}
Suppose $v\in\bsA_r$. Then,
\[
\inner{\frac{\dot{\varXi}_n}{\alpha_n'\dot{\psi}_n}\varLambda_2
		- \frac{\varXi'_n}{\alpha_n'\psi_n'}\varLambda_1}{v}
	= \frac{\dot{\varXi}_n}{\alpha_n'\dot{\psi}_n}\inner{\varLambda_2}{v}
		- \frac{\varXi'_n}{\alpha_n'\psi_n'}\inner{\varLambda_1}{v}
	= O\bigl(n^{-1/6}\omega_r^3(n)\bigr)
\]
uniformly on bounded subsets of $\bsA_r\times\bsA_r$, as it follows from \eqref{eq:quotients}
and Lemma~\ref{lem:Lambda-one-and-two}.

Next, let us look at the first term in \eqref{eq:super-mess}. Here, another computation yields
\begin{multline*}
\varLambda_1(x) - \varLambda_2(x)
	= s_0'(x)\varXi(x) - s_0(x)\dot{\varXi}(x)
		+ \psi_0'(x)\varUpsilon_s(x) - \psi_0(x)\dot{\varUpsilon}_s(x)
\\[1mm]
		+ \psi_0(x)\varUpsilon_c(x) + \varUpsilon_c(x)\varXi(x)
		- \dot{\varUpsilon}_s(x)\varXi(x) - \varUpsilon_s(x)\dot{\varXi}(x).
\end{multline*}
Now, resorting to Remark~\ref{rem:more-about-psi},
\begin{equation*}
s_0'(x)\varXi(x) - s_0(x)\dot{\varXi}(x)
	= (s_0\psi_1)'(x) - s_0(x)\dot{\psi}_1^\text{res}(x)
		+ s_0'(x)\varXi^{(2)}(x) - s_0(x)\dot{\varXi}^{(2)}(x)
\end{equation*}
so, since $v'\in\sA_r$,
\begin{equation*}
\inner{s_0'\varXi - s_0\dot{\varXi}}{v}
	= - \inner{s_0\varXi}{v'} - \inner{s_0\dot{\psi}_1^\text{res}}{v}
		+ \inner{s_0'\varXi^{(2)}}{v} - \inner{s_0\dot{\varXi}^{(2)}}{v}.
\end{equation*}
Hence,
\begin{equation*}
\frac{1}{\alpha_n'}\inner{s_0'\varXi - s_0\dot{\varXi}}{v}
	= O\bigl(n^{-1/3}\omega_r^2(n)\bigr)
\end{equation*}
uniformly on bounded subsets of $\bsA_r\times\bsA_r$. Analogously, resorting to
Remark~\ref{rem:even-more-about-s} and integration by parts, we obtain
\begin{equation*}
\inner{\psi_0'\varUpsilon_s - \psi_0\dot{\varUpsilon}_s + \psi_0\varUpsilon_c}{v}
	= - \inner{\psi_0s_1}{v'} - \inner{\psi_0\dot{s}_1^\text{res}}{v}
		+ \inner{\psi_0'\varUpsilon^{(2)}_s}{v} - \inner{\psi_0\dot{\varUpsilon}^{(2)}_s}{v}
		+ \inner{\psi_0\varUpsilon^{(2)}_c}{v},
\end{equation*}
thus yielding
\begin{equation*}
\frac{1}{\alpha_n'}
	\inner{\psi_0'\varUpsilon_s - \psi_0\dot{\varUpsilon}_s + \psi_0\varUpsilon_c}{v}
	= O\bigl(n^{-1/3}\omega_r^2(n)\bigr).
\end{equation*}
Finally, since
\begin{equation*}
\varUpsilon_c(x)\varXi(x) - \dot{\varUpsilon}_s(x)\varXi(x)
	= s_1(x)\varXi(x) - \dot{s}_1^\text{res}(x)\varXi(x)
		+ \varUpsilon^{(2)}_c\varXi(x) - \dot{\varUpsilon}^{(2)}_s\varXi(x),
\end{equation*}
the same line of reasoning already used implies
\begin{equation*}
\frac{1}{\alpha_n'}\inner{\varUpsilon_c\varXi - \dot{\varUpsilon}_s\varXi
	- \varUpsilon_s\dot{\varXi}}{v} = O\bigl(n^{-1/3}\omega_r^2(n)\bigr),
\end{equation*}
uniformly on bounded subsets of $\bsA_r\times\bsA_r$. The proof is now complete.
\end{proof}


\section{The norming constants}
\label{sec:norming-constants}

\begin{proposition}
\label{lem:norming-constant-real-analytic}
Given $n\in\N$, $\kappa_n:\sA_r\to\R$ is a real analytic map whose gradient is
\begin{equation*}
\frac{\partial\kappa_n}{\partial q(x)}
	= \left(\frac{1}{\psi_n'}\frac{\partial\psi'_n}{\partial q(x)}
		- \frac{1}{\dot{\psi}_n}\frac{\partial\dot{\psi}_n}{\partial q(x)}\right)
		+ \left(\frac{\dot{\psi}_n'}{\psi_n'} - \frac{\ddot{\psi}_n}{\dot{\psi}_n}\right)
		\frac{\partial\lambda_n}{\partial q(x)}.
\end{equation*}
\end{proposition}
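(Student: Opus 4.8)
The plan is to start from the identity
\begin{equation*}
\kappa_n(q) = \log\!\bigl(-\psi_n'/\dot{\psi}_n\bigr)
\end{equation*}
and reduce the statement to the analyticity of the maps $q\mapsto\psi_n'$ and $q\mapsto\dot{\psi}_n$, which is already implicit in Sections~\ref{sec:frechet} and~\ref{sec:eigenvalues}. First I would record that the logarithm is legitimate: by \eqref{eq:the-trick}, $\norm{\psi(q,\lambda_n(q),\cdot)}_2^2 = -\psi_n'\dot{\psi}_n$, and since the left-hand side is the positive squared norm of an eigenfunction, both $\psi_n'\neq 0$ and $\dot{\psi}_n\neq 0$; moreover $-\psi_n'/\dot{\psi}_n = e^{\kappa_n(q)}>0$ for real $q$. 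Thus $\kappa_n$ is well defined and real-valued, and it will suffice to exhibit an analytic extension of $-\psi_n'/\dot{\psi}_n$ to a complex neighbourhood of each real $q$ and then compose with (the principal branch of) $\log$.

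For the analyticity I would argue exactly as in the proof of Proposition~\ref{lem:eingenvalue-is-real-analytic}. Fix $q\in\sA_r$. Because $\dot{\psi}(q,\lambda_n(q),0)\neq 0$, the Implicit Function Theorem (\cite[Appendix~B]{poeschel}) gives an open $\cV\subset\sA_r^\C$ containing $q$ and an analytic $\mu_n:\cV\to\C$ with $\psi(w,\mu_n(w),0)=0$ and $\mu_n(q)=\lambda_n(q)$, coinciding with $\lambda_n$ on $\cV\cap\sA_r$. The maps $w\mapsto\psi'(w,\mu_n(w),0)$ and $w\mapsto\dot{\psi}(w,\mu_n(w),0)$ are then analytic on $\cV$: the map $w\mapsto(w,\mu_n(w))$ is analytic into $\sA_r^\C\times\C$, while $(w,z)\mapsto\psi'(w,z,0)$ and $(w,z)\mapsto\dot{\psi}(w,z,0)$ are analytic in $w$ (Section~\ref{sec:frechet}), entire in $z$ (Lemma~\ref{lem:basic-estimates}), and locally bounded — hence jointly analytic — so the compositions are analytic. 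Shrinking $\cV$ so that the denominator stays away from $0$, the quotient $-\psi'(w,\mu_n(w),0)/\dot{\psi}(w,\mu_n(w),0)$ is analytic on $\cV$ and equals $e^{\kappa_n}$ on $\cV\cap\sA_r$; composing with $\log$ yields the analytic extension of $\kappa_n$ required by the definition of real analyticity in Section~\ref{sec:frechet}.

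The gradient then follows from the chain rule. Writing $\psi_n'=\psi'(q,\lambda_n(q),0)$ and using that $\partial_z\psi'(q,z,0)=\dot{\psi}'(q,z,0)$ and $\partial_z\dot{\psi}(q,z,0)=\ddot{\psi}(q,z,0)$ — which exist since $\psi'(q,\cdot,0)$ and $\dot{\psi}(q,\cdot,0)$ are entire — one gets
\begin{equation*}
d_q\psi_n'=\frac{\partial\psi'_n}{\partial q}+\dot{\psi}_n'\,\frac{\partial\lambda_n}{\partial q},
\qquad
d_q\dot{\psi}_n=\frac{\partial\dot{\psi}_n}{\partial q}+\ddot{\psi}_n\,\frac{\partial\lambda_n}{\partial q},
\end{equation*}
where $\partial\psi'_n/\partial q$ and $\partial\dot{\psi}_n/\partial q$ are the gradients of Remark~\ref{rem:everything-has-gradient} at $\lambda=\lambda_n(q)$, and $\partial\lambda_n/\partial q$ is that of Proposition~\ref{lem:eingenvalue-is-real-analytic}. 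Since $d\log(-\psi_n'/\dot{\psi}_n)=d\psi_n'/\psi_n'-d\dot{\psi}_n/\dot{\psi}_n$, substituting and regrouping the $\partial\lambda_n/\partial q$ terms gives exactly
\begin{equation*}
\frac{\partial\kappa_n}{\partial q(x)}
=\left(\frac{1}{\psi_n'}\frac{\partial\psi'_n}{\partial q(x)}-\frac{1}{\dot{\psi}_n}\frac{\partial\dot{\psi}_n}{\partial q(x)}\right)
+\left(\frac{\dot{\psi}_n'}{\psi_n'}-\frac{\ddot{\psi}_n}{\dot{\psi}_n}\right)\frac{\partial\lambda_n}{\partial q(x)}.
\end{equation*}

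The only genuinely delicate point is the passage to joint analyticity and the chain rule in the mixed setting (the Banach variable $q$ together with the one-dimensional variable $z$): one must be sure the $z$-derivative of $\psi'$ and $\dot{\psi}$ may be taken inside the composition with $\mu_n$, and this is precisely where the local boundedness coming from the estimates of Lemma~\ref{lem:basic-estimates} (and Remark~\ref{rem:useful-bounds}) together with the entirety of $\psi'(q,\cdot,0)$ and $\dot{\psi}(q,\cdot,0)$ in $z$ do the work. Everything else is formal once $\lambda_n$ is known to be analytic and the denominators are controlled via \eqref{eq:the-trick}; in particular the statement is pointwise in $q$ for fixed $n$, so no uniformity is needed here.
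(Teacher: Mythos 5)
Your proposal is correct and follows essentially the same route as the paper: the paper's own proof simply observes, citing Remark~\ref{rem:everything-has-gradient}, that $\kappa_n=\log\bigl(-\psi_n'/\dot{\psi}_n\bigr)$ is a composition of continuously differentiable maps (with the gradient formula coming from the chain rule exactly as you compute it), and your write-up just makes explicit the ingredients the paper leaves implicit — nonvanishing of $\psi_n'$ and $\dot{\psi}_n$ via \eqref{eq:the-trick}, the analytic extension of $\lambda_n$ through the Implicit Function Theorem as in Proposition~\ref{lem:eingenvalue-is-real-analytic}, and joint analyticity in $(q,z)$.
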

\begin{proof}
In view of Remark~\ref{rem:everything-has-gradient}, by definition $\kappa_n$ is
a composition of continuously differentiable maps.
\end{proof}

\begin{theorem}
\label{thm:norming-constants}
Suppose $q\in\bsA_r$. Then,
\begin{equation*}
\kappa_n(q)
	= - 2\pi \frac{\int_0^\infty \ai(x+a_n)\ai'(x+a_n)q(x)dx}{(-a_n)^{1/2}}
		+ O\bigl(\omega_r^3(n)\bigr).
\end{equation*}
Moreover, this formula holds uniformly on bounded subsets of $\bsA_r$.
\end{theorem}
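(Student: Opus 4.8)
The plan is to adapt to $\kappa_n$ the ``integrate the gradient along the segment $\{tq:0\le t\le 1\}$'' device used for Theorem~\ref{thm:eigenvalues}, peel off the contribution isolated in Lemma~\ref{lem:alphas-and-betas}, and show that the remainder is $O(\omega_r^3(n))$. Throughout one writes $\lambda_n=\lambda_n(tq)$ and keeps the abbreviations $\psi_n,\psi_n',\dot\psi_n,\varXi_n,\dots$ of Section~\ref{sec:auxiliary}, now evaluated at $tq$. Since $\kappa_n(0)=0$ and $\kappa_n$ is real analytic (Proposition~\ref{lem:norming-constant-real-analytic}),
\[
\kappa_n(q)=\int_0^1\frac{d}{dt}\kappa_n(tq)\,dt=\int_0^1\inner{\frac{\partial\kappa_n}{\partial(tq)}}{q}_{\sA_r}dt .
\]
Substituting the gradient formula of Proposition~\ref{lem:norming-constant-real-analytic} together with the expressions for $\partial\psi'/\partial q$, $\partial\dot\psi/\partial q$ and $\partial\lambda_n/\partial q$ from Remark~\ref{rem:everything-has-gradient}, writing $c=c(tq,\lambda_n,\cdot)$, $\psi=\psi(tq,\lambda_n,\cdot)$, $s,\dot s,\dot\psi,\dots$ likewise, and multiplying the gradient by $(1+x)^r$ (so that pairing with $q$ in $\sA_r$ becomes the plain $L^2$ pairing $\inner{\cdot}{q}$), the integrand becomes
\[
-\frac{c\psi}{\psi_n'}-\frac{\dot s\psi+s\dot\psi}{\dot\psi_n}+\left(\frac{\dot\psi_n'}{\psi_n'}-\frac{\ddot\psi_n}{\dot\psi_n}\right)\eta_n^2(tq,\cdot).
\]
The last summand is dealt with at once: pairing with $q$ and integrating over $t$ leaves $\int_0^1\bigl(\tfrac{\dot\psi_n'}{\psi_n'}-\tfrac{\ddot\psi_n}{\dot\psi_n}\bigr)\inner{\eta_n^2(tq,\cdot)}{q}\,dt$, and Lemma~\ref{lem:second-parenthesis} bounds the parenthesis by $O(n^{1/3}\omega_r^2(n))$ while the estimates inside the proof of Theorem~\ref{thm:eigenvalues} and Lemma~\ref{lem:denominator} give $\inner{\eta_n^2(tq,\cdot)}{q}=O(n^{-1/3}\omega_r(n))$ uniformly in $t$, so the product integrates to $O(\omega_r^3(n))$.

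For the remaining two summands I would insert the decompositions behind Lemma~\ref{lem:Lambda-one-and-two}, namely $c\psi=c_0\psi_0+\varLambda_1(tq,\lambda_n,\cdot)$ and $\dot s\psi+s\dot\psi=\dot s_0\psi_0+s_0\dot\psi_0+\varLambda_2(tq,\lambda_n,\cdot)$, which splits them as
\[
\left(-\frac{c_0\psi_0}{\psi_n'}-\frac{\dot s_0\psi_0+s_0\dot\psi_0}{\dot\psi_n}\right)-\left(\frac{\varLambda_1}{\psi_n'}+\frac{\varLambda_2}{\dot\psi_n}\right).
\]
The second bracket, divided by $(1+x)^r$, is exactly the function $\varLambda_n(tq,\cdot)$ of Lemma~\ref{lem:last-piece}, so its contribution to $\kappa_n(q)$ is $O(n^{-1/3}\omega_r^2(n))\subseteq O(\omega_r^3(n))$. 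In the first bracket one uses the exact identities $\tfrac1{\psi_n'}=\tfrac1{\alpha_n'}\bigl(1-\tfrac{\varXi_n'}{\psi_n'}\bigr)$ and $\tfrac1{\dot\psi_n}=-\tfrac1{\alpha_n'}\bigl(1-\tfrac{\dot\varXi_n}{\dot\psi_n}\bigr)$ of the footnote to Lemma~\ref{lem:last-piece}; since $\dot s_0=c_0-s_0'$ and $\dot\psi_0=-\psi_0'$, its ``unperturbed-denominator'' part collapses to $-\partial_x\bigl(s_0\psi_0\bigr)/\alpha_n'$, and substituting $s_0(\lambda_n,\cdot)=-\beta_n\psi_0(\lambda_n,\cdot)+\alpha_n\theta_0(\lambda_n,\cdot)$ and comparing with \eqref{eq:A-super-0} identifies this with $(1+x)^r A_n^0(tq,\cdot)$.

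What is left over from the first bracket is the ``denominator correction''
\[
\frac{c_0\psi_0}{\alpha_n'}\cdot\frac{\varXi_n'}{\psi_n'}-\frac{\dot s_0\psi_0+s_0\dot\psi_0}{\alpha_n'}\cdot\frac{\dot\varXi_n}{\dot\psi_n},
\]
which, by Lemma~\ref{lem:first-parenthesis} (i.e. $\tfrac{\varXi_n'}{\psi_n'}=\tfrac{\dot\varXi_n}{\dot\psi_n}+O(\omega_r^2(n))$) and the identity just obtained, equals $-\tfrac{\dot\varXi_n}{\dot\psi_n}(1+x)^r A_n^0(tq,\cdot)+O(\omega_r^2(n))\,c_0\psi_0/\alpha_n'$. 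Now $\tfrac{\dot\varXi_n}{\dot\psi_n}=O(\omega_r(n))$ and $\inner{A_n^0(tq,\cdot)}{q}_{\sA_r}=O(n^{-1/3}\omega_r(n))$ uniformly in $t$ (read off from the proof of Lemma~\ref{lem:alphas-and-betas}); moreover $\inner{c_0\psi_0}{q}/\alpha_n'=O(\omega_r(n))$, which follows from $c_0\psi_0=\beta_n'\psi_0^2-\alpha_n'\theta_0\psi_0$, Lemma~\ref{lem:alpha-etcetera} and Lemma~\ref{lem:about-omega}; hence both pieces integrate to $O(\omega_r^3(n))$.

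Assembling the pieces gives $\kappa_n(q)=\int_0^1\inner{A_n^0(tq,\cdot)}{q}_{\sA_r}\,dt+O(\omega_r^3(n))$, and Lemma~\ref{lem:alphas-and-betas} (after replacing $(\tfrac32\pi n)^{1/3}$ by $(-a_n)^{1/2}$ via \eqref{eq:zeros-airy}, a change of relative size $O(n^{-1})$ and hence of absolute size $O(n^{-4/3})$ because $\int_0^\infty\ai(x+a_n)\ai'(x+a_n)q(x)\,dx$ is bounded on bounded subsets of $\bsA_r$) identifies this integral with $-2\pi(-a_n)^{-1/2}\int_0^\infty\ai(x+a_n)\ai'(x+a_n)q(x)\,dx$ up to an error $O(n^{-2/3}\omega_r(n))\subseteq O(\omega_r^3(n))$, which is the claim; uniformity in $q$ is inherited from the lemmas invoked. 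The part I expect to be the real obstacle is the bookkeeping in the two preceding paragraphs: estimated separately, the remainder terms are only $O(\omega_r^2(n))$, and pushing them down to $O(\omega_r^3(n))$ forces one to group the contributions so that the near-equality $\varXi_n'/\psi_n'\approx\dot\varXi_n/\dot\psi_n$ (Lemma~\ref{lem:first-parenthesis}) and the fact that $\varLambda_1/\psi_n'+\varLambda_2/\dot\psi_n$ is far smaller than either of its summands (Lemma~\ref{lem:last-piece}) actually get used — which is precisely what the auxiliary results of Section~\ref{sec:auxiliary} were arranged to allow.
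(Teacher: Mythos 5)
Your proposal is correct and follows essentially the same route as the paper's own proof: integrate the gradient of Proposition~\ref{lem:norming-constant-real-analytic} along $tq$, split off $A_n^0$ exactly as in \eqref{eq:A-super-0}, control the remainder via Lemmas~\ref{lem:first-parenthesis}, \ref{lem:second-parenthesis}, \ref{lem:last-piece} and the bound $\inner{\eta_n^2(tq,\cdot)}{q}=O(n^{-1/3}\omega_r(n))$, and finish with Lemma~\ref{lem:alphas-and-betas}. The one wrinkle --- you quote that lemma with the sign $-2\pi$ while its statement reads $+2\pi$ --- is shared with the paper itself (the slip occurs in the integration by parts inside that lemma's proof, and the correct sign is indeed the $-2\pi$ appearing in the theorem), so it is not a gap in your argument.
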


\begin{proof}
Let us abbreviate
\begin{equation*}
\frac{\partial\kappa_n}{\partial q(x)}
	= A_n(q,x) + B_n(q)\frac{\partial\lambda_n}{\partial q(x)},
\end{equation*}
where the definitions of $A_n(q,x)$ and $B_n(q)$ are self-evident.
Since $\kappa_n(0)=0$, one has
\begin{align}
\kappa_n(q)
	&= \int_0^1 \inner{\frac{\partial\kappa_n}{\partial(tq)}}{q}_{\sA_r} dt\nonumber
	\\
	&= \int_0^1 \inner{A_n(tq,\cdot)}{q}_{\sA_r} dt
		+ \int_0^1 B_n(tq)\inner{\eta^2_n(tq,\cdot)}{q} dt.\label{eq:integral-kappa}
\end{align}

Let us look at the first term above. Recalling \eqref{eq:A-super-0},
a routine computation shows that
\[
A_n(q,x) = A^0_n(q,x) + \varDelta_n(q,x),
\]
where
\begin{multline}
\varDelta_n(q,x)
	= \frac{(c_0\psi_0)(\lambda_n,x)}{\alpha_n'}
		\left(\frac{\varXi_n'}{\psi_n'} - \frac{\dot{\varXi}_n}{\dot{\psi}_n}\right)(1+x)^{-r}
\\
	+ \frac{(s_0\psi_0'+ s_0'\psi_0)(\lambda_n,x)}{\alpha_n'}
		\frac{\dot{\varXi}_n}{\dot{\psi}_n}(1+x)^{-r}
	- \varLambda_n(q,x);\label{eq:mess}
\end{multline}
the last term has been defined in Lemma~\ref{lem:last-piece}.

In order to simplify the ongoing discussion, let us write
\[
\varDelta_n(q,x) = 1_n(x) + 2_n(x) + 3_n(x),
\]
where each term is defined by the corresponding one (and arranged in the same order as)
in \eqref{eq:mess}.
Now,
\[
\inner{1_n}{q}_{\sA_r} = O\bigl(\omega_r^3(n)\bigr)
\]
uniformly on bounded sets of $\bsA_r$ because
\[
(c_0\psi_0)(\lambda_n,x)
	= \beta_n'\psi_0^2(\lambda_n,x) - \alpha_n'(\psi_0\theta_0)(\lambda_n,x),
\]
and
\[
\inner{\psi_0^2(\lambda_n,\cdot)}{q} = O\bigl(\omega_r(n)\bigr),\quad
\inner{(\psi_0\theta_0)(\lambda_n,\cdot)}{q} = O\bigl(\omega_r(n)\bigr),
\]
according to an argument already used in the proof of Lemma~\ref{lem:first-parenthesis}.
Analogously,
\[
(s_0\psi_0)(\lambda_n,x)
	= - \beta_n \psi_0^2(\lambda_n,x) + \alpha_n (\psi_0\theta_0)(\lambda_n,x)
\]
so
\[
\inner{(s_0\psi_0' + s_0'\psi_0)(\lambda_n,\cdot)}{q}
	= - \beta_n \inner{\psi_0^ 2(\lambda_n,\cdot)}{q'}
		+ \alpha_n \inner{(\psi_0\theta_0)(\lambda_n,\cdot)}{q'}.
\]
This implies
\[
\inner{2_n}{q}_{\sA_r} = O\bigl(n^{-1/3}\omega_r^2(n)\bigr),
\]
also uniformly on bounded sets of $\bsA_r$, due to the preceding argument along with
\eqref{eq:quotients}.
Finally, the last term has been dealt with in Lemma~\ref{lem:last-piece} so
\[
\inner{3_n}{q}_{\sA_r}
	= - \inner{\varLambda_n(q,\cdot)}{q}_{\sA_r}
	= O\bigl(n^{-1/3}\omega_r^2(n)\bigr).
\]

It remains to look at the second term in \eqref{eq:integral-kappa}. From the proof
of Theorem~\ref{thm:eigenvalues}, it follows that
\[
\inner{\eta^2_n(tq,\cdot)}{q} = O\bigl(n^{-1/3}\omega_r(n)\bigr)
\]
uniformly in bounded subsets of $\bsA_r$. 
Moreover, Lemma~\ref{lem:second-parenthesis} implies
\[
B_n(tq) = O\bigl(n^{1/3}\omega_r^2(n)\bigr)
\]
also uniformly as above. Therefore,
\[
\int_0^1 B_n(tq)\inner{\eta^2_n(tq,\cdot)}{q} dt = O\bigl(\omega_r^3(n)\bigr).
\]

Summing up, by recalling Lemma~\ref{lem:alphas-and-betas} we obtain
\begin{equation*}
\kappa_n(q)
	= - 2\pi \frac{\int_0^\infty \ai(x+a_n)\ai'(x+a_n)q(x)dx}{\bigl(\tfrac32\pi n\bigr)^{1/3}}
		+ O\bigl(\omega_r^3(n)\bigr),
\end{equation*}
but this yields the desired result because $(-a_n)^{1/2}(\tfrac32\pi n)^{-1/3} = 1 + O(n^{-1})$.
\end{proof}


\section*{Acknowledgments}

This research is based upon work supported by the National Scientific and Technical
Research Council (CONICET, Argentina) under grant PIP 11220200102127CO, and
Universidad Nacional del Sur (Argentina) under grant PGI 24/L117. The authors
thank Pablo Panzone for helpful comments.

\section*{Data availability}

Data sharing not applicable to this article as no datasets were generated or
analyzed during the current study.


\appendix

\section{Some results concerning the Airy functions}

The Airy functions of the first and second kind, $\ai$ and $\bi$, are real entire
solutions to the differential equation
\begin{equation}
\label{eq:airy-ode}
\frac{d^2\varphi}{dz^2} = z\varphi\quad (z\in\C).
\end{equation}
They are linearly independent; indeed $W_z\{\ai,\bi\} = \ai(z)\bi'(z) - \ai'(z)\bi(z) = 1/\pi$
\cite[\S 9.2]{nist}. Another pair of linearly independent solutions to \eqref{eq:airy-ode}
is given by the linear combinations
\begin{equation}
\label{eq:ci}
\ci_\pm(z)
	= \bi(z) \mp i \ai(z)
	= 2e^{\mp i\pi/6}\ai(ze^{\mp i2\pi/3});
\end{equation}
they obey $W_z\{\ai,\ci_\pm\} = 1/\pi$.

According to \cite[\S 9.7]{nist}, the function $\ai$ satisfies the asymptotic expansions
\begin{gather}
\ai(z) = \frac{e^{-\zeta}}{2\sqrt{\pi}z^{1/4}}\left[1+O(\zeta^{-1})\right],
\quad \abs{\arg(z)}\le\pi-\delta,\label{eq:ai-asymp-positive}
\intertext{and}
\ai(-z)
	= \frac{1}{\sqrt{\pi}z^{1/4}}\left[\cos\left(\zeta - \tfrac{\pi}{4}\right)
		+ O\left(\zeta^{-1}e^{\abs{\im\zeta}}\right)\right],
\quad \abs{\arg(z)}\le\tfrac{2\pi}{3} - \delta,\label{eq:ai-asymp-negative}
\end{gather}
as $\abs{z}\to\infty$; here $\zeta =\frac23 z^{3/2}$ whose branch cut is assumed to
be along $\R_-$. Its derivative obeys
\begin{gather}
\ai'(z) = -\frac{z^{1/4}e^{-\zeta}}{2\sqrt{\pi}}\left[1+O(\zeta^{-1})\right],
\quad \abs{\arg(z)}\le\pi-\delta,\label{eq:ai-prime-asymp-positive}
\intertext{and}
\ai'(-z)
	= \frac{z^{1/4}}{\sqrt{\pi}}\left[\sin\left(\zeta - \tfrac{\pi}{4}\right)
		+ O\left(\zeta^{-1}e^{\abs{\im\zeta}}\right)\right],
\quad \abs{\arg(z)}\le\tfrac{2\pi}{3} - \delta.\label{eq:ai-prime-asymp-negative}
\end{gather}
As for the function $\bi$, one has
\begin{gather}
\bi(z)
	= \frac{e^{\zeta}}{\sqrt{\pi}z^{1/4}}
		\left(1+O(\zeta^{-1})\right),
		\quad \abs{\arg(z)}<\tfrac{\pi}{3}-\delta,\nn
\\[1mm]
\bi(-z)
	= \frac{1}{\sqrt{\pi}z^{1/4}}
	  \left[-\sin\left(\zeta-\tfrac{\pi}{4}\right)
	  	+O\left(\zeta^{-1}e^{\abs{\im\zeta}}\right)\right],
	  \quad \abs{\arg(z)}\le\tfrac{2\pi}{3} - \delta,\label{eq:bi-1}
\intertext{and}
\bi(ze^{\pm i\pi/3})
	= \sqrt{\frac{2}{\pi}}\frac{e^{\pm i\pi/6}}{z^{1/4}}
	  \left[\cos(\zeta-\tfrac{\pi}{4}\mp i\tfrac{\log 2}{2})
	  	+ O\left(\zeta^{-1}e^{\abs{\im\zeta}}\right)
	  \right],
\quad \abs{\arg(z)}\le\tfrac{2\pi}{3}-\delta.\nn
\end{gather}
In addition,
\begin{equation}
\label{eq:bi-prime-asymp-negative}
\bi'(-z)
	= \frac{z^{1/4}}{\sqrt{\pi}}
		\left[\cos\left(\zeta-\tfrac{\pi}{4}\right)
		+O\left(\zeta^{-1}e^{\abs{\im\zeta}}\right)\right],
	\quad \abs{\arg(z)}\le\tfrac{2\pi}{3} - \delta.
\end{equation}
All these expansions are uniform for any given small $\delta>0$ and (say) $\abs{z}\ge 1$.

\begin{lemma}
\label{lem:g_Ag_C}
Define $g_A(z) = \exp(-\tfrac23\re z^{3/2})$ and $g_B(z) = 1/g_A(z)$.
There exists a constant $C_0>0$ such that
\begin{equation*}
\abss{\ai(z)}
\le C_0\frac{g_A(z)}{1+\abs{z}^{1/4}} \quad\text{and}\quad
\abss{\ai'(z)}
\le C_0 \bigl(1 + \abs{z}^{1/4}\bigr) g_A(z),
\end{equation*}
hence
\begin{equation*}
\abss{\ci_\pm(z)}
\le 2C_0\frac{g_B(z)}{1+\abs{z}^{1/4}} \quad\text{and}\quad
\abss{\ci_\pm'(z)}
\le 2C_0 \bigl(1 + \abs{z}^{1/4}\bigr) g_B(z),
\end{equation*}
for all $z\in\C$. As a consequence,
\begin{equation*}
\abss{\bi(z)}
\le 2C_0\frac{g_B(z)}{1+\abs{z}^{1/4}} \quad\text{and}\quad
\abss{\bi'(z)}
\le 2C_0 \bigl(1 + \abs{z}^{1/4}\bigr) g_B(z),
\end{equation*}
also for all $z\in\C$.
\end{lemma}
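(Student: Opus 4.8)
The plan is to reduce everything to sharp pointwise bounds on $\ai$ and $\ai'$, after which the inequalities for $\ci_\pm$, $\ci_\pm'$, $\bi$ and $\bi'$ fall out of the connection formula \eqref{eq:ci}.

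So first I would bound $\ai$ and $\ai'$, fixing once and for all a small $\delta>0$ and treating $|z|\ge1$ and $|z|\le1$ separately. For $|z|\ge1$: expansion \eqref{eq:ai-asymp-positive} is uniform on $\{|\arg z|\le\pi-\delta\}$, and there $|e^{-\zeta}|=e^{-\re\zeta}=e^{-\tfrac23\re z^{3/2}}=g_A(z)$ while the bracketed factor stays bounded, so $|\ai(z)|\le C|z|^{-1/4}g_A(z)$. The remaining sector $\pi-\delta<|\arg z|\le\pi$ is covered by applying \eqref{eq:ai-asymp-negative} at the point $-z$ (whose argument then satisfies the required $|\arg(-z)|\le\tfrac{2\pi}{3}-\delta$); since $|\cos(\zeta-\tfrac\pi4)|\le\cosh(\im\zeta)\le e^{|\im\zeta|}$ this gives $|\ai(z)|\le C|z|^{-1/4}e^{|\im\zeta|}$, and the elementary but essential point is that for such $z$ one has $\re z^{3/2}=-|\im(-z)^{3/2}|$, i.e.\ $e^{|\im\zeta|}=g_A(z)$, which is a one-line computation with the principal branch of $w\mapsto w^{3/2}$. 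Combining the two sectors and using $|z|^{1/4}\ge\tfrac12(1+|z|^{1/4})$ we get $|\ai(z)|\le 2Cg_A(z)(1+|z|^{1/4})^{-1}$ on $|z|\ge1$; the argument for $\ai'$ is identical via \eqref{eq:ai-prime-asymp-positive} and \eqref{eq:ai-prime-asymp-negative}, which carry a factor $z^{1/4}$ instead of $z^{-1/4}$ and so yield $|\ai'(z)|\le C(1+|z|^{1/4})g_A(z)$. Finally, on the closed unit disc $\ai$ and $\ai'$ are bounded, $g_A$ is bounded away from $0$ (as $|\re z^{3/2}|\le|z|^{3/2}\le1$), and $1+|z|^{1/4}\le2$, so both inequalities persist there after enlarging the constant; taking $C_0$ to be the largest constant obtained proves the first two displayed bounds for all $z\in\C$.

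Next I would derive the remaining bounds. By \eqref{eq:ci}, $|\ci_\pm(z)|=2|\ai(ze^{\mp i2\pi/3})|$, and since $|ze^{\mp i2\pi/3}|=|z|$ the bound just proved yields $|\ci_\pm(z)|\le 2C_0\,g_A(ze^{\mp i2\pi/3})(1+|z|^{1/4})^{-1}$; it remains only to observe that $g_A(ze^{\mp i2\pi/3})=g_B(z)$, which amounts to $(ze^{\mp i2\pi/3})^{3/2}=e^{\mp i\pi}z^{3/2}=-z^{3/2}$ (hence $\re(ze^{\mp i2\pi/3})^{3/2}=-\re z^{3/2}$) for the principal branch. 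The same manipulation, with $\tfrac{d}{dz}\ai(ze^{\mp i2\pi/3})=e^{\mp i2\pi/3}\ai'(ze^{\mp i2\pi/3})$ contributing only a unimodular factor, gives the bound for $\ci_\pm'$. Adding the two identities in \eqref{eq:ci} gives $2\bi(z)=\ci_+(z)+\ci_-(z)$ and $2\bi'(z)=\ci_+'(z)+\ci_-'(z)$, so the triangle inequality turns the $\ci_\pm$, $\ci_\pm'$ bounds into those claimed for $\bi$ and $\bi'$.

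The step needing genuine care is the behaviour near $\R_-$: one must check that the oscillatory asymptotics \eqref{eq:ai-asymp-negative} and \eqref{eq:ai-prime-asymp-negative} really do collapse to clean multiples of $g_A$, which hinges entirely on matching $e^{|\im\zeta|}$ with $g_A$ in that sector, a matter of carefully tracking the branch of the $3/2$-power; the very same bookkeeping underlies the identity $g_A(ze^{\mp i2\pi/3})=g_B(z)$ used for $\ci_\pm$. Everything else is routine estimation.
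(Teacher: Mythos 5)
Your bounds for $\ai$ and $\ai'$ are correct and follow the paper's own route: use \eqref{eq:ai-asymp-positive} (resp.\ \eqref{eq:ai-prime-asymp-positive}) in a large sector, use \eqref{eq:ai-asymp-negative} (resp.\ \eqref{eq:ai-prime-asymp-negative}) at $-z$ in the remaining sector around $\R_-$, match $e^{\abs{\im\zeta}}$ with $g_A(z)$ via $\abss{\im(-z)^{3/2}}=\abss{\re z^{3/2}}=-\re z^{3/2}$ there, and absorb the unit disc by continuity; that you split at $\abs{\arg z}=\pi-\delta$ while the paper splits at $2\pi/3$ is immaterial.

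The step from $\ai$ to $\ci_\pm$, however, contains a genuine gap. The identity $(ze^{\mp i2\pi/3})^{3/2}=-z^{3/2}$ that you invoke holds for the principal branch only when $\arg z\mp 2\pi/3$ remains in $(-\pi,\pi]$; when the rotation crosses the cut (for $\ci_+$ this is $\arg z\in(-\pi,-\pi/3]$, for $\ci_-$ it is $\arg z\in(\pi/3,\pi]$) one gets instead $(ze^{\mp i2\pi/3})^{3/2}=z^{3/2}$, hence $g_A(ze^{\mp i2\pi/3})=g_A(z)$, and your argument only yields $\abss{\ci_\pm(z)}\le 2C_0\,g_A(z)/(1+\abs{z}^{1/4})$ there, which is weaker than the claimed $g_B$ bound because $\re z^{3/2}\le 0$ in those sectors. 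Moreover, contrary to your closing remark, no finer branch bookkeeping can rescue the step: on the ray $z=iy$, $y\to+\infty$, the rotated point for $\ci_-$ is $y e^{-i5\pi/6}$, and \eqref{eq:ai-asymp-positive} applied there shows that $\abss{\ci_-(iy)}$ (and with it $\abss{\bi(iy)}$) grows like $g_A(iy)/(1+y^{1/4})=g_B(iy)^{-1}/(1+y^{1/4})$, while $g_B(iy)$ is exponentially small; so the asserted inequality with $g_B$ can only hold where the rotated argument stays on the principal sheet (or after weakening $g_B(z)$ to $\exp\bigl(\tfrac23\abss{\re z^{3/2}}\bigr)$ in the $\ci_\pm$ and $\bi$ bounds). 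To be fair, the paper's own proof disposes of this step with the single sentence that the bound on $\ci_\pm$ follows ``from the second identity in \eqref{eq:ci}'', so your proposal reproduces the published argument faithfully, including this weakest point; but the explicit identity you state is exactly where the argument, as written, breaks down. The final passage from $\ci_\pm$ to $\bi$ by the triangle inequality is of course fine once the $\ci_\pm$ bounds are available.
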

\begin{proof}
Set $\delta=\pi/3$ in \eqref{eq:ai-asymp-positive} and \eqref{eq:ai-asymp-negative}.
Since $\text{\rm Ai}(z)$ is an entire function, it follows that there exists $C_0>0$
such that
\[
\abs{\ai(z)}
	\le \frac{C_0}{1+\abs{z}^{1/4}}\times
	\begin{cases}
		\exp(-\tfrac23 \re z^{3/2}),&
		\arg(z)\in [-\frac{2\pi}{3},\frac{2\pi}{3}],
		\\[1mm]
		\exp(\tfrac23\abss{\im(-z)^{3/2}}),&
		\arg(z)\in(-\pi,-\frac{2\pi}{3})\cup(\frac{2\pi}{3},\pi].
		\end{cases}
\]
Thus, the bound on $\ai(z)$ follows after noticing that
$\abss{\im(-z)^{3/2}}=\abss{\re z^{3/2}}$ and $\abss{\re z^{3/2}}=-\re z^{3/2}$ if
$\arg(z)\in(-\pi,-\frac{2\pi}{3})\cup(\frac{2\pi}{3},\pi]$.
Applying a similar argument to \eqref{eq:ai-prime-asymp-positive}
and \eqref{eq:ai-prime-asymp-negative} yields the bound on $\ai'(z)$.
From the second identity in \eqref{eq:ci} one obtains the bound on $\ci_\pm(z)$ and
$\ci_\pm'(z)$. Finally, since $\bi(z)$ is a linear combination of $\ci_+(z)$
and $\ci_-(z)$, the last assertion follows.
\end{proof}

Let us recall the contours introduced in the proof of Lemma~\ref{lem:crude-asymp-eigenvalues},
\begin{equation*}
\cE^m := \left\{z\in\C: \abs{\zeta} = \bigl(m+\tfrac14\bigr)\pi\right\},
\quad
\cE_k := \left\{z\in\C: \abs{\zeta - \bigl(k-\tfrac14\bigr)\pi}=\tfrac{\pi}{2}\right\},
\quad
m,k\in\N.
\end{equation*}
In view of \eqref{eq:zeros-airy}, every $\cE_k$ encloses one and only one zero
of $\ai(-z)$, at least for $k$ sufficiently large.

\begin{lemma}
\label{lem:bound-for-gA}
There exists $m_0, k_0\in\N$ such that, for every $m\ge m_0$ and $k\ge k_0$, the
following statement holds true:
\begin{equation}
\label{eq:bound-for-gA}
\frac{g_A(-z)}{1+\abs{z}^{1/4}}
	< 8\sqrt{\pi} \abs{\text{\rm Ai}(-z)},
\end{equation}
whenever $z\in\cE^m$ or $z\in\cE_k$.
\end{lemma}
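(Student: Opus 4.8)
The plan is to exploit the largeness of $m$ and $k$: once $m_0,k_0$ are chosen large, $\abs{\zeta}$ is large everywhere on $\cE^m$ and $\cE_k$, so the asymptotic expansions for $\ai$ recorded in this Appendix are in force. The computational identity behind everything is $g_A(-z)=e^{\abs{\im\zeta}}$ whenever $\abs{\arg z}<\tfrac{2\pi}{3}$: with principal branches one checks $(-z)^{3/2}=i\,z^{3/2}$ when $\im z>0$ (and, by Schwarz reflection, its conjugate when $\im z<0$), so $\re(-z)^{3/2}=-\tfrac32\im\zeta$ and $g_A(-z)=\exp(\im\zeta)$; and in the sector $\abs{\arg z}<\tfrac{2\pi}{3}$ the sign of $\im\zeta$ matches that of $\im z$. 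I will therefore treat $\cE_k$ directly and split $\cE^m$ into the arc $R_A=\{z\in\cE^m:\abs{\arg z}\le\tfrac{2\pi}{3}-\vartheta\}$, where \eqref{eq:ai-asymp-negative} applies, and its complement $R_B$, where $\abs{\arg(-z)}$ stays bounded away from $\pi$, so \eqref{eq:ai-asymp-positive} applies to $\ai(-z)$; here $\vartheta>0$ is a small fixed angle.

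On $R_B$ the estimate is soft. Since $\bigl\lvert e^{-\frac23(-z)^{3/2}}\bigr\rvert=g_A(-z)$, \eqref{eq:ai-asymp-positive} gives $8\sqrt{\pi}\abs{\ai(-z)}=4\abs{z}^{-1/4}g_A(-z)(1+O(\abs{z}^{-3/2}))$, which dominates $g_A(-z)/(1+\abs{z}^{1/4})$ once $\abs{z}$, hence $m$, is large.

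On $\cE_k$ and on $R_A$ I use \eqref{eq:ai-asymp-negative}. Combined with $g_A(-z)=e^{\abs{\im\zeta}}$ and $\abs{z}^{1/4}/(1+\abs{z}^{1/4})<1$, the inequality \eqref{eq:bound-for-gA} reduces, after multiplying by $\abs{z}^{1/4}$, to the cleaner requirement $\abs{\cos(\zeta-\tfrac\pi4)}>\tfrac18 e^{\abs{\im\zeta}}$ on the contour, with room to absorb the error $O(\zeta^{-1}e^{\abs{\im\zeta}})$ and the loss $(1+\abs{z}^{1/4})^{-1}$; on $\cE^m$ that room is present because $\abs{\zeta}$ grows linearly in $m$ while $\abs{z}^{1/4}$ grows only like $m^{1/6}$. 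The rest rests on the identity $\abs{\cos(\zeta-\tfrac\pi4)}^{2}=\cos^{2}(\re\zeta-\tfrac\pi4)+\sinh^{2}(\im\zeta)$. On $\cE_k$, writing $\zeta=(k-\tfrac14)\pi+\tfrac\pi2 e^{i\varphi}$, the right-hand side becomes $\sin^{2}(\tfrac\pi2\cos\varphi)+\sinh^{2}(\tfrac\pi2\sin\varphi)$, which is $\ge1$ (equality only at $\varphi\in\{0,\pi\}$); since also $\abs{\im\zeta}\le\tfrac\pi2$, so $g_A(-z)\le e^{\pi/2}<8$, the claim follows for $k$ large. On $R_A$ I separate $\abs{\im\zeta}\le2$ from $\abs{\im\zeta}>2$: in the first case the geometry of the circle $\cE^m$ pins $\re\zeta$ within $O(m^{-1})$ of $(m+\tfrac14)\pi$ — the competitor $\re\zeta=-(m+\tfrac14)\pi$, at which $\cos(\re\zeta-\tfrac\pi4)=0$, occurs only at $\arg z=\tfrac{2\pi}{3}$, which has been cut away into $R_B$ — so $\abs{\cos(\re\zeta-\tfrac\pi4)}\ge\tfrac12$; in the second case $\sinh^{2}(\im\zeta)$ dominates $\tfrac1{64}e^{2\abs{\im\zeta}}$; and the elementary inequalities $\tfrac14+\sinh^{2}t>\tfrac1{64}e^{2t}$ for $t\ge0$ and $\sinh^{2}t>\tfrac1{64}e^{2t}$ for $t$ bounded away from $0$ interpolate, giving $\abs{\cos(\zeta-\tfrac\pi4)}>\tfrac18 e^{\abs{\im\zeta}}$ throughout $R_A$.

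The main obstacle is exactly this last step: securing $\abs{\cos(\zeta-\tfrac\pi4)}>\tfrac18 e^{\abs{\im\zeta}}$ uniformly along $R_A$ and then checking that the constant $8$ really survives. It works because the dangerous configurations — $\cos(\zeta-\tfrac\pi4)$ small with $\im\zeta$ small, i.e. $\zeta$ near $-(m+\tfrac14)\pi$, i.e. $\arg z$ near $\tfrac{2\pi}{3}$ — have been relocated into $R_B$, where an entirely different and easier bound holds; and the error term in \eqref{eq:ai-asymp-negative}, of relative order $\abs{\zeta}^{-1}$, together with the loss $(1+\abs{z}^{1/4})^{-1}$, stays strictly inside the margin left by the $8$ once $m_0,k_0$ are taken large enough. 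With those choices \eqref{eq:bound-for-gA} holds on $\cE^m$ for all $m\ge m_0$ and on $\cE_k$ for all $k\ge k_0$.
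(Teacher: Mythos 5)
Your argument is correct, and its skeleton is the same as the paper's: split each contour according to whether $\ai(-z)$ is in its oscillatory or its exponentially growing regime, use the identity $g_A(-z)=e^{\abs{\im\zeta}}$ in the oscillatory sector, and beat $\tfrac18 e^{\abs{\im\zeta}}$ by the modulus of the oscillating factor with enough margin to absorb the $O(\zeta^{-1}e^{\abs{\im\zeta}})$ error. The execution differs in two respects. First, the paper cuts $\cE^m$ at $\abs{\arg z}=\tfrac{\pi}{3}$ and handles the oscillatory part of both families of contours by quoting the lemma from Pöschel--Trubowitz, namely $\abs{w-n\pi}\ge\tfrac{\pi}{4}\implies e^{\abs{\im w}}<4\abs{\sin w}$, applied to $w=\zeta+\tfrac{\pi}{4}$, so the dangerous configuration you worry about (small $\im\zeta$ with $\zeta$ near $-(m+\tfrac14)\pi$, i.e. $\arg z$ near $\tfrac{2\pi}{3}$) never enters its oscillatory arc, since there $\re\zeta\ge 0$; you instead cut at $\abs{\arg z}=\tfrac{2\pi}{3}-\vartheta$, which forces you to exclude that configuration explicitly, and you do so correctly by relocating it to the exponential-regime arc where \eqref{eq:ai-asymp-positive} applied to $\ai(-z)$ gives the soft bound. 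Second, in place of the cited sine lemma you prove the needed lower bound by hand from $\abs{\cos(x+iy)}^2=\cos^2 x+\sinh^2 y$, with the circle parametrization on $\cE_k$ (your minimum $\ge 1$ there is indeed correct, and $e^{\pi/2}<8$ supplies the margin) and the case analysis in $\abs{\im\zeta}$ on $\cE^m$; the stated elementary inequalities $\tfrac14+\sinh^2 t>\tfrac1{64}e^{2t}$ and $\sinh^2 t>\tfrac1{64}e^{2t}$ (for $t$ away from $0$) are true and leave room for the error terms and the factor $\abs{z}^{1/4}/(1+\abs{z}^{1/4})$. The trade-off: your version is self-contained and makes the provenance of the constant $8$ transparent, while the paper's is shorter because the quoted lemma handles all sizes of $\im\zeta$ at once and lets the same two-line argument serve both $\cE_k$ and the oscillatory arc of $\cE^m$.
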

\begin{proof}
Let us begin by recalling \eqref{eq:ai-asymp-positive} and \eqref{eq:ai-asymp-negative}
in more precise terms:
\begin{align}
\label{eq:ai-also-detailed}
\ai(z)
	= \frac{e^{-\zeta}}{2\sqrt{\pi}z^{1/4}}
		\left[1+W_1(z)\right],
		\quad \abs{\arg(z)}\le\tfrac{2\pi}{3},
		\quad \abs{z}\ge 1,
\\[1mm]
\label{eq:ai-detailed}
\ai(-z)
	= \frac{1}{\sqrt{\pi}z^{1/4}}
		\left[\sin\left(\zeta+\tfrac{\pi}{4}\right)+W_2(z)\right],
		\quad \abs{\arg(z)}\le\tfrac{\pi}{3},
		\quad \abs{z}\ge 1,
\end{align}
where the functions $W_1(z)$ and $W_2(z)$ satisfy
\begin{align}
\abs{\frac{W_1(z)}{\zeta^{-1}}}\le D_1,
		\quad \abs{\arg(z)}\le\tfrac{2\pi}{3},
		\quad \abs{z}\ge 1,\nonumber
\\[1mm]
\abs{\frac{W_2(z)}{\zeta^{-1}e^{\abs{\im\zeta}}}}\le D_2,
		\quad \abs{\arg(z)}\le\tfrac{\pi}{3},
		\quad \abs{z}\ge 1.\label{eq:w2}
\end{align}

There exists $k_0\in\N$ such that, for all $k\ge k_0$, $z\in\cE_k$ implies
$\re z\ge 1$ and $\arg(z)\in(-\frac{\pi}{3},\frac{\pi}{3})$ so
$\arg(-z)\in(-\pi,-\frac{2\pi}{3})\cup(\frac{2\pi}{3},\pi]$. Since in this case
$\abss{\im z^{3/2}}=-\re (-z)^{3/2}$, one has
\begin{equation*}
\frac{g_A(-z)}{1+\abs{z}^{1/4}}
	=   \frac{e^{\abs{\im\zeta}}}{1+\abs{z}^{1/4}}
	\le \frac{e^{\abs{\im(\zeta+\frac{\pi}{4})}}}{\abs{z}^{1/4}}
\end{equation*}
for all $z\in\cE_k$ and $k\ge k_0$. By a well-known result
(see \cite[Ch.~2, Lemma 1]{poeschel}),
\begin{equation*}
\abs{w-n\pi}\ge \frac{\pi}{4} \implies e^{\abs{\im w}}<4\abs{\sin w}
\end{equation*}
for all integer $n$. Hence,
\begin{equation}
\label{eq:bound-1}
\frac{g_A(-z)}{1+\abs{z}^{1/4}}
	< 4 \frac{\abs{\sin(\zeta+\frac{\pi}{4})}}{\abs{z}^{1/4}}
\end{equation}
for all $z\in\cE_k$ and $k\ge k_0$.
On the other hand, since $\abs{\sin(\zeta+\tfrac{\pi}{4})}\ge d>0$ for all $z\in\cE_k$
and $k\ge k_0$, \eqref{eq:ai-detailed} implies
\begin{equation*}
\abs{\ai(-z)}
	\ge \frac{\abs{\sin(\zeta+\frac{\pi}{4})}}{\sqrt{\pi}\abs{z}^{1/4}}
	\abs{1-\frac{\abs{W_2(z)}}{\abs{\sin(\zeta+\tfrac{\pi}{4})}}}.
\end{equation*}
However,
\begin{equation*}
\frac{\abs{W_2(z)}}{\abs{\sin(\zeta+\tfrac{\pi}{4})}}
	\le \frac{e^{\abs{\im\zeta}}}{\abs{\zeta}}\frac{D_2}{d},
\end{equation*}
and note that $\abs{\im\zeta}\le \pi/2$ if $z\in\cE_k$.
Thus, by increasing $k_0$ if necessary, we have
\begin{equation}
\label{eq:bound-2}
\abs{\ai(-z)}
	\ge \frac{\abs{\sin(\zeta+\frac{\pi}{4})}}{2\sqrt{\pi}\abs{z}^{1/4}},
\end{equation}
for all $z\in\cE_k$ with $k\ge k_0$.

The proof concerning $\cE^m$ is analogous: Suppose $m_0=k_0$. Then, by the previous
argument, \eqref{eq:bound-for-gA} holds for $z\in\cE^m$ within the sector
$\arg(z)\in[-\frac{\pi}{3},\frac{\pi}{3}]$, for $m\ge m_0$. Within the sector
$\arg(-z)\in[-\frac{2\pi}{3},\frac{2\pi}{3}]$, we have ($\eta:=\frac23(-z)^{3/2}$)
\begin{equation*}
\frac{g_A(-z)}{1+\abs{z}^{1/4}}
	\le \frac{e^{-\re\eta}}{\abs{z}^{1/4}}
\end{equation*}
and, due to \eqref{eq:ai-also-detailed},
\begin{equation*}
\abs{\ai(-z)}
	\ge \frac{e^{-\re\eta}}{2\sqrt{\pi}\abs{z}^{1/4}}
		\abs{1-\abs{W_1(-z)}}.
\end{equation*}
Finally, using \eqref{eq:w2} ---and increasing $m_0$ if required---,
we have $1-\abs{W_1(-z)}\ge 1/4$ whenever $\abs{z}\ge m_0$.
\end{proof}



\begin{thebibliography}{XX}

\bibitem{calogero}
	F. Calogero, and A. Degasperis,
	{\em Inverse spectral problem for the one-dimensional
	Schr\"odinger equation with an additional linear potential},
	Lett. Nuovo Cimento {\bf 23} (1978), 143--149.

\bibitem{chelkak1}
	D. Chelkak, P. Kargaev, and E. Korotyaev,
	{\em An inverse problem for an harmonic oscillator perturbed by
	potential: uniqueness},
	Lett. Math. Phys. {\bf 64} (2003), 7--21.

\bibitem{chelkak2}
	D. Chelkak, P. Kargaev, and E. Korotyaev,
	{\em Inverse problem for harmonic oscillator perturbed by
	potential, characterization},
	Comm. Math. Phys. {\bf 249} (2004), 133--196.
%
%
\bibitem{chelkak3}
	D. Chelkak and E. Korotyaev,
	{\em The inverse problem for perturbed harmonic oscillator on the
	half-line with a {D}irichlet boundary condition},
	Ann. Henri Poincar{\'e} {\bf 8} (2007), 1115--1150.
%
%

\bibitem{its}
	A. Its and V. Sukhanov,
	{\em A Riemann--Hilbert approach to the inverse problem for the
	Stark operator on the line},
	Inverse Problems {\bf 32} (2016), 055003 (27pp).

\bibitem{katchalov}
	A. P. Katchalov and Ya. V. Kurylev,
	{\em Inverse scattering problem for a one-dimensional Stark
	effect Hamiltonian},
	Inverse Problems {\bf 6} (1990), L1--L5.

\bibitem{khanmamedov}
A.\ Kh.\ Khanmamedov and M.\ G.\ Makhmudova,
	{\em Inverse spectral problem for the Schrödinger
	equation with an additional linear potential},
	Theor. Math. Phys. {\bf 202} (2020), 58--71.

\bibitem{korot1}
E. L. Korotyaev,
	{\em Asymptotics of resonances for 1D Stark operators},
	Lett. Math. Phys. {\bf 108} (2018), 1307–-1322.

\bibitem{korot2}
E. L. Korotyaev,
	{\em Resonances for 1d Stark operators},
	J. Spectr. Theory {\bf 7} (2017), 699–732.

\bibitem{lk}
A. R. Latifova and A. Kh. Khanmamedov,
	{\em Inverse spectral problem for the one-dimensional
	Stark operator on the semiaxis},
	Ukr. Math. J. {\bf 72} (2020), 568--584.

\bibitem{liu}
W. Liu,
	{\em Criteria for eigenvalues embedded into the absolutely continuous
	spectrum of perturbed Stark type operators},
	J. Funct. Anal. {\bf 276} (2019), 2936--2967.

\bibitem{mk}
M.\ G.\ Makhmudova and A.\ Kh.\ Khanmamedov,
	{\em On spectral properties of the one-dimensional Stark operator
	on the semiaxis},
	Ukr. Math. J. {\bf 71} (2020), 1813--1819.

\bibitem{nist}
	F. W. J. Olver, D. W. Lozier, R. F. Boisvert, and
	C. W. Clark,
	{\em NIST Handbook of Mathematical Functions},
	U.S. Department of Commerce National Institute of Standards
	and Technology, Washington D.C., 2010.

\bibitem{poeschel}
	J. Pöschel and E. Trubowitz,
	{\em Inverse Spectral Theory (Pure and Applied Mathematics vol. 130)},
	Academic Press, Boston, 1987.

\bibitem{sukhanov}
	V. V. Sukhanov,
	{\em Trace formulas for the one-dimensional Stark operator and
	integrals of motion for the cylindrical Korteweg–de Vries equation},
	St. Petersburg Math.J. {\bf 31} (2020) 903-910.



\end{thebibliography}
\end{document}